\newtheorem{thm}{Theorem}[section]
\newtheorem{prop}{Proposition}[section]
\newtheorem{lem}{Lemma}[section]
\newtheorem{rem}{Remark}[section]
\theoremstyle{notation}
\newtheorem*{notation}{Notation}
\newcommand{\R}{\mathbb{R}}
\newcommand{\C}{\mathbb{C}}
\numberwithin{equation}{section}
\newcommand{\N}{\mathbb{N}}
\newcommand{\eps}{\epsilon}
\newcommand{\wto}{\rightharpoonup}
\newcommand{\vertiii}[1]{{\left\vert\kern-0.25ex\left\vert\kern-0.25ex\left\vert #1
\right\vert\kern-0.25ex\right\vert\kern-0.25ex\right\vert}}
\newcommand{\leqnomode}{\tagsleft@true}
\newcommand{\reqnomode}{\tagsleft@false}
\begin{document}

\reqnomode

\title{Higher topological type semiclassical states for Sobolev critical Dirac equations with degenerate potential}

\author{Shaowei Chen \and Tianxiang Gou ${}^*$}

\address{Shaowei Chen
\newline \indent School of Mathematical Sciences, Huaqiao University,
\newline \indent Quanzhou 362021, People's Republic of China.}
\email{swchen6@163.com}

\address{Tianxiang Gou
\newline \indent School of Mathematics and Statistics, Xi'an Jiaotong University,
\newline \indent Xi'an, Shaanxi 710049, China.}
\email{tianxiang.gou@xjtu.edu.cn}
\thanks{ ${}^*$ Corresponding author. E-mail address: tianxiang.gou@xjtu.edu.cn.}

\begin{abstract} In this paper, we are concerned with semiclassical states to the following Sobolev critical Dirac equation with degenerate potential,
\begin{align*}
-\textnormal{i} \eps \alpha \cdot \nabla u + a  \beta u + V(x)
u=|u|^{q-2} u + |u| u \quad \mbox{in} \,\, \R^3,
\end{align*}
where $u:\mathbb{R}^3\rightarrow \mathbb{C}^4$, $2<q<3$, $\eps>0$
is a small parameter, $a>0$ is a constant, $\alpha=(\alpha_1,
\alpha_2, \alpha_3)$, $\alpha_j$ and $\beta$ are $4 \times 4$
Pauli-Dirac matrices. We construct an infinite sequence of higher topological type
semiclassical states with higher energies concentrating around the
local minimum points of the degenerate potential $V$.
Here the degeneracy of $V$   means that $|V(x)|<a$ for any $x \in \R^3$ and
$|V(x)|$ may approach $a$ as $|x|$ tends to infinity.
 The solutions are obtained from
a minimax characterization of higher dimensional symmetric linking structure, which correspond
to critical points of the underlying energy functional at energy levels
where compactness condition breaks down.
Our approach is variational, which mainly relies on penalization method and
blow-up arguments along with local type Pohozaev identity.

\medskip
{\noindent \textsc{Keywords}:} Semiclassical states;
Nonlinear Dirac equations; Sobolev critical exponent; Degenerate potential; Variational methods.

\medskip
{\noindent \textsc{2010 mathematics subject classification:}} 35B25; 35A15; 35B33; 35Q40.
\end{abstract}

\maketitle

\section{Introduction}

In this paper, we consider the existence and concentration of semiclassical states to the following nonlinear Dirac equation with Sobolev critical exponent,
\begin{align} \label{dirac}
-\textnormal{i} \eps \alpha \cdot \nabla u + a  \beta u + V(x) u=f(x, |u|) u \quad \mbox{in} \,\, \R^3,
\end{align}
where $\eps>0$ is a small parameter, $f(x, |u|)u=|u|^{q-2} u + |u| u$ for $x \in \R^3$ and $2<q<3$, $a>0$ is a constant,
$\alpha=(\alpha_1, \alpha_2, \alpha_3)$, $\alpha_j$
and $\beta$ are $4 \times 4$ Pauli-Dirac matrices defined by
\begin{align*}
\alpha_j=\left(
\begin{array}{cc}
0 & \sigma_j \\
\sigma_j & 0
\end{array}
\right),
\quad
\beta=\left(
\begin{array}{cc}
I &  0\\
0& -I
\end{array}
\right)
\quad \mbox{for} \,\, j=1, 2, 3
\end{align*}
and
\begin{align*}
\sigma_1=\left(
\begin{array}{cc}
0 & 1 \\
1 & 0
\end{array}
\right),
\quad
\sigma_2=\left(
\begin{array}{cc}
0 &  -\textnormal{i}\\
\textnormal{i} & 0
\end{array}
\right),
\quad
\sigma_3=\left(
\begin{array}{cc}
1 &  0\\
0& -1
\end{array}
\right).
\end{align*}
The relevant Sobolev space designated to investigate semiclassical
states to \eqref{dirac} is equivalent to $H^{1/2}(\R^3, \C^4)$,
which is continuously embedded into $L^p(\R^3, \C^4)$ for any $2
\leq p \leq 3$. And $p=3$ is the Sobolev critical exponent.

Equation \eqref{dirac} arises when one seeks for standing waves to the following time-dependent nonlinear Dirac equation,
\begin{align} \label{dt}
- \textnormal{i} \hbar \partial_t \psi
= \textnormal{i} c \hbar \alpha \cdot \nabla \psi - mc^2 \beta \psi -M(x) \psi+g(x, |\psi|)\psi \quad \mbox{in} \,\, \R \times \R^3,
\end{align}
where $\psi$ represents the wave function of the state of the electron, $\hbar$ is the Planck constant,
$c$ is the speed of light, $m$ is the mass of the electron, the external fields $M(x)$ and
$g(x, |\psi|) \psi$ represent nonlinear self-coupling. Here a standing wave to \eqref{dt} is a solution of the form
$$
\psi(t, x)=e^{\frac{\textnormal{i} \mu t}{\hbar}} u(x).
$$
In physics, equation \eqref{dt} is widely used to build
relativistic models of extended particles by means of nonlinear
Dirac fields, see for example \cite{BDr, FLR, FFK, Th} and
references therein.

Regarding the study of semiclassical states to nonlinear
Schr\"odinger equations, it is well known that there already exist
a large number of papers, see for example \cite{ABC, AMN, AR, BJ,
BW1, BW2, CLW, CW, DF1, DF2, JT, JR, Oh1, Oh2, Ra, Wang} and
references therein. However, there are relatively few papers
devoted to the study of semiclassical states to nonlinear Dirac
equations. In \cite{Ding1}, the author initially proved the
existence of semiclassical states to \eqref{dirac} with $V(x)=0$
and $f(x, |u|)u=P(x) |u|^{p-2} u$ for $2<p<3$ clustering near the
global maximum points of $P$. Later, this result was extended to
the case for \eqref{dirac} with competing potentials in
\cite{DiLi,DiRu1,DLR}, where the authors assumed that $V$
satisfies the condition
$$
V(x) \not\equiv 0, \quad \min_{x \in \R^3} V(x) < \liminf_{|x| \to \infty} V(x).
$$
In \cite{DX}, by assuming that there exists a bounded domain
$\Lambda \subset \R^3$ such that
\begin{align*}
\min_{x \in \overline{\Lambda}} V(x)< \min_{x \in \partial \Lambda} V(x),
\end{align*}
the authors established the existence of semiclassical states to
\eqref{dirac} concentrating around the local minimum points of the
potential $V$. The solutions obtained in these papers are of
ground state type and the number of such solutions are finite.
Successively, it was derived in \cite{WZ} that there exist an
unbounded sequence of semiclassical states to \eqref{dirac}  with
$f(x, |u|)u=|u|^{p-2} u$ for $2<p<3$ concentrating around the
local minimum points of the potential $V$. The result in \cite{WZ}
was recently generalized to the Sobolev critical case in
\cite{CG}. All the previous papers mainly concern semiclassical
states to \eqref{dirac} for the case when the potential $V$
fulfills the condition $\sup_{x \in \R^3} |V(x)|<a$. This
condition is helpful to set up an infinitely dimensional
topological linking structure for the corresponding energy
functional and to verify the compactness of the problem. Lately, the
authors in \cite{WZ1} extended the result in \cite{WZ} to the case
when the potential $V$ is degenerate, i.e. $V$ possesses a locally
trapping region, $|V(x)|<a$ for $x \in \R^3$ but $|V(x)|$ may
approach to $a$ as $|x|$ tends to infinity. In this case, the
associated energy functional no longer admits a linking structure
and the treatment of compactness issues becomes complex. Inspired
by the works above, it is interesting to question whether the
result in \cite{WZ1} can be extended to the Sobolve critical case.
This is the motivation of the present paper.

More precisely, for the potential $V$, we formulate the following assumptions.

\begin{enumerate}
\item [($V_1$)] $V \in C^1(\R^3, \R)$ and there are constants $0 <\tau<2$ and $\gamma>0$ such that
$$
a-|V(x)| \geq \frac{\gamma}{1+|x|^{\tau}} \quad \mbox{for any} \,\, x \in \R^3.
$$
\item [($V_2$)] There exists a bounded domain $\mathcal{M} \subset \R^3$ with smooth boundary $\partial \mathcal{M}$ such that
$$
\nabla V(x) \cdot {\bf{n}}(x) >0 \quad \mbox{for any} \ \, x \in \partial \mathcal{M},
$$
where ${\bf{n}}(x)$ denotes the unit outward normal vector to $\partial \mathcal{M}$ at $x$.
\end{enumerate}

Note that the assumption $(V_2)$ is fulfilled if $V$ has an isolated local minimum set,
i.e. $V$ has a local trapping potential well. In order to address our main result, we shall fix some notations.
Under the assumption $(V_2)$, we define the set of critical points of $V$ by
\begin{align} \label{defv}
\mathcal{V}:=\{x \in \mathcal{M}: \nabla V(x)=0\}.
\end{align}
Clearly, $\mathcal{V}$ is a nonempty compact subset of $\mathcal{M}$. Without loss of generality,
we shall assume that $0 \in \mathcal{V}$ throughout the paper. For $\Omega \subset \R^3$, $\eps>0$ and $\delta >0$, we define
$$
\Omega_{\eps}:=\left\{x \in \R^3: \eps x \in \Omega\right\},
$$
and
$$
\Omega^{\delta}:=\left\{x \in \R^3: \mbox{dist}(x, \,
\Omega):=\inf_{y \in \Omega} |x-y| < \delta\right\}.
$$

The main result of this paper reads as follows.

\begin{thm}\label{jgh77rtff11}
Assume $5/2<q<3$, $(V_1)$ and $(V_2)$ hold. Then, for any positive integer $N$,
there exists a constant $\epsilon_N>0$ such that, for any $0<\epsilon<\epsilon_N$,
\eqref{dirac} has at least $N$ pairs of solutions $\pm
u_{j, \epsilon}$ for $1 \leq j \leq  N$. Furthermore, for any
$\delta>0$, there exist $c=c(\delta, N)>0$ and $C=C(\delta, N)>0$
such that
$$
|u_{j, \epsilon}(x)|\leq C\exp\left(-\Big(\frac{c
\,\textnormal{dist}(x,\mathcal{V}^\delta)}{\epsilon}
\Big)^{\frac{2-\tau}{2}}\right), \quad 1 \leq j \leq N,
$$
where $\mathcal{V}$ is defined by \eqref{defv}.
\end{thm}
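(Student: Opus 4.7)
The strategy is variational, built on three ingredients: a penalization of the nonlinearity that restores a usable linking geometry despite the degenerate potential, a $\Z_2$-equivariant higher-dimensional symmetric linking argument that produces arbitrarily many critical values, and a blow-up plus local Pohozaev analysis to recover compactness at the critical level. First, I would rescale via $v(x)=u(\eps x)$ to bring \eqref{dirac} into the form $-i\al\cdot\nabla v+a\be v+V(\eps x)v=|v|^{q-2}v+|v|v$, so that $\eps$ enters only through the potential. On $H^{1/2}(\R^3,\C^4)$ one decomposes $E=E^+\oplus E^-$ along the spectral projections of $D_0:=-i\al\cdot\nabla+a\be$ (whose spectrum is $(-\infty,-a]\cup[a,\infty)$), obtaining a strongly indefinite energy functional $\Phi_\eps(v)=\tfrac12\|v^+\|^2-\tfrac12\|v^-\|^2+\tfrac12\int V(\eps x)|v|^2-\int F(v)$.

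Second, because $(V_1)$ only gives $|V|<a$ and allows $|V(x)|\to a$ at infinity, the usual linking geometry degenerates. Following the penalization philosophy, I would introduce a modified nonlinearity $\tilde f_\eps$ that agrees with $|u|^{q-2}u+|u|u$ on an $\eps$-scaled neighborhood of the trapping region $\cM_\eps$ but is truncated outside so that the penalized functional $\tilde\Phi_\eps$ satisfies all the geometric hypotheses of a strongly indefinite linking theorem on $E$. Applying a $\Z_2$-equivariant linking (pseudo-index/genus) argument of Benci--Bartsch--Ding type with symmetric finite-dimensional slices built from translates of approximate ground states supported near points of $\cV$, I obtain for any $N$ an ordered sequence of minimax values $c_{1,\eps}\le\cdots\le c_{N,\eps}$ with $\pm$-symmetric critical pairs, together with a uniform upper bound $c_{N,\eps}\le\kappa_N$ independent of small $\eps$.

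Third, and this is the principal obstacle, I must recover compactness at these levels in the presence of both a Sobolev-critical exponent and a degenerate potential. I would extract Cerami sequences at $c_{j,\eps}$, establish $H^{1/2}$-boundedness via the penalized structure, and then analyze three defects of compactness: vanishing, concentration at a critical bubble, and escape to infinity. Vanishing is excluded by the strict positivity of $c_{1,\eps}$. Critical-bubble concentration is ruled out by pushing $\kappa_N$ below the first bubble threshold; this is precisely where $q>5/2$ is used, since the subcritical term $|u|^{q-2}u$ must contribute enough negative energy on the minimax slice to beat the critical bubble energy. Escape at infinity is treated by rescaling around a concentration center and applying a local Pohozaev identity on balls of radius commensurate with $a-|V|$; the blow-up limit satisfies a homogeneous Dirac equation with a Pohozaev obstruction, yielding the desired contradiction.

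Fourth, once nontrivial critical pairs $\pm u_{j,\eps}$ of $\tilde\Phi_\eps$ are secured, I would verify they solve the original \eqref{dirac} by showing $|u_{j,\eps}|$ is small outside $\cM_\eps$ so the truncation is never active. This uses a comparison argument exploiting the spectral gap of $D_0$ and the decay rate in $(V_1)$. The exponential estimate of the form $\exp(-(c\,\mathrm{dist}(x,\cV^\de)/\eps)^{(2-\tau)/2})$ then follows from a Kato-type inequality applied to $|u_{j,\eps}|^2$ combined with a sub/super-solution comparison in which the competition between the spectral opening $\sqrt{a^2-V^2}\gtrsim|x|^{-\tau/2}$ and the integration gives exactly the exponent $(2-\tau)/2$. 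The most delicate step throughout is the third one: producing $N$ well-separated minimax levels that are simultaneously below the critical bubble threshold while ruling out concentration at infinity through a blow-up analysis compatible with the degeneracy of $V$.
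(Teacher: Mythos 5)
Your overall scaffold --- rescaling, spectral splitting $E=E^+\oplus E^-$, penalization to restore linking geometry despite the degeneracy of $V$, a Bartsch--Ding type $\Z_2$-symmetric linking theorem, and a final comparison/decay estimate giving the exponent $(2-\tau)/2$ --- is in line with the paper. However, the third step contains a genuine gap that would sink the argument for general $N$. You propose to rule out critical-bubble concentration ``by pushing $\kappa_N$ below the first bubble threshold,'' using $q>5/2$ to make the subcritical term defeat the critical bubble energy. This cannot work: the minimax levels $c_{j,\eps}$ from the symmetric linking are ordered and $c_{N,\eps}$ necessarily grows without bound in $N$, so for large $N$ they exceed any fixed bubble threshold. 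Indeed the paper stresses precisely this point --- the critical values ``go beyond the threshold guaranteeing the compactness condition valid in the Sobolev critical case,'' so one cannot expect a threshold comparison to restore compactness.

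The paper's mechanism is structurally different. The modified nonlinearity $f_\eps$ does not merely localize the original nonlinearity near $\mathcal{M}_\eps$; it also truncates the critical power itself via $m_\eps$ (with $m_\eps(t)\le c/\eps$), so that the modified functional $\Gamma_\eps$ is effectively subcritical for each fixed $\eps>0$. Consequently $\Gamma_\eps$ satisfies Palais--Smale at \emph{every} level (Lemma~\ref{ps}), and the abstract theorem delivers $N$ critical pairs unconditionally, with energies bounded by $2R_N^2$ but not below any bubble threshold. The role of $q>5/2$ is then purely a posteriori: along $\eps_n\to0$ one performs a profile decomposition of $u_{j,\eps_n}$ into translated profiles ($\Lambda_1$) and rescaled, concentrating profiles ($\Lambda_\infty$), and the local Pohozaev identity of Lemma~\ref{ncbvhhyf6yyrt6f6ryr}, applied on a shrinking annulus around a would-be concentration center, yields $\int|u_n|^q\varphi\le C\sigma_n^{-1/2}$ while the bubble contributes $\ge c\,\sigma_n^{q-3}$. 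The inequality $\sigma_n^{q-3}\le C\sigma_n^{-1/2}$ with $\sigma_n\to\infty$ is contradictory exactly when $q-3>-1/2$, i.e.\ $q>5/2$ (Lemma~\ref{nvcbvjug8f877fs}). This shows $\Lambda_\infty=\emptyset$, hence the $m_\eps$-truncation is never active, uniform $L^\infty$ bounds hold, and the modified solutions solve the original equation. In short: you need to truncate the critical nonlinearity (not just localize it spatially), restore PS at all levels for the truncated problem, and then use the Pohozaev identity to exclude bubbling in the semiclassical limit, rather than trying to keep the minimax levels below a bubble threshold.
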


\begin{rem}
The assumption $5/2<q<3$ stems from Lemma \ref{nvcbvjug8f877fs},
 which is only used to exclude the possibility of blowing-up of semiclassical states to \eqref{dirac} in the Sobolev critical case.
\end{rem}

In comparison with the study carried out in the existing literature,
the new feature of ours lies in establishing infinitely many localized semiclassical states with higher energies to \eqref{dirac} in the Sobolev critical case,
where the potential $V$ is degenerate. In fact, the energy levels of the solutions can be arbitrarily large,
which go beyond the threshold guaranteeing the compactness condition valid in the Sobolev critical case.

Let us now outline the methods involved to prove Theorem \ref{jgh77rtff11}. Note first that if the potential $V$ is degenerate,
then the underlying energy functional does not possess a linking structure
and the verification of the boundedness of the Palais-Smale sequence is not straightforward.
Moreover, the solutions correspond   to critical points of the underlying energy functional
at energy levels exceeding   the threshold   ensuring the compactness condition valid in the Sobolev critical case.
As a consequence, we adapt the ideas from \cite{CLW, WZ1, ZLL} to modify the energy functional
by adding a penalized functional term and truncating the nonlinearity.
By doing this, we can prove that the modified energy functional enjoys a linking structure and satisfies the Palais-Smale condition.
It then leads to the existence of an unbounded sequence of semiclassical states to the modified problem.
To complete the proof, it remains to deduce that the solutions we obtained are indeed ones to the original problem.
For this, we shall make use of some ideas from \cite{CG, CLW} to derive uniform $L^{\infty}$ estimate of the solutions.

The   paper is organized as follows. In Section \ref{pre},
we establish a variational framework for our problem, introduce the modified equation and present some preliminary results.
In Section \ref{exist}, applying an abstract theorem, we deduce the existence of an infinite sequence of semiclassical states to the modified problem.
 In Section \ref{estimate}, we prove Proposition \ref{bcbvhfyfyufuadx},
 by which we conclude that the solutions we obtained in Section \ref{exist} are indeed ones to \eqref{Deps}.
 Section \ref{proof} is devoted to the proof of Theorem \ref{jgh77rtff11}.

\begin{notation}
Throughout the paper, we use the notations $\to$ and $\wto$ to denote the strong convergence and the weak convergence of sequences in associated spaces, respectively.
We use letters $c$ and $C$ to denote generic positive constants, whose values may change from line to line.
In addition, $B_R(x)$ represents the open ball in $\R^3$ with center at $x \in \R^3$ and radius $R>0$,
and $o_n(1)$ stands for quantities which tend to zero as $n \to \infty$. For a function $u$, we denote by $\overline{u}$ the conjugate function of $u$.
\end{notation}

\section{Preliminaries} \label{pre}

In this section, let us present some preliminaries used to establish our main result.
Firstly, by making a change of variable $x \to \eps x$,  we may rewrite \eqref{dirac} as
\begin{align} \label{Deps}
-\textnormal{i} \alpha \cdot \nabla u + a  \beta u + V(\eps x)
u=|u|^{q-2} u + |u| u \quad \mbox{in} \,\, \R^3.
\end{align}
In what follows, for any $p \geq
1$, we denote by $\|\cdot\|_{L^p(\R^3)}$ the usual norm in
$L^p(\R^3, \C^4)$. Let us define
$$
H_a:=-\textnormal{i} \alpha \cdot \nabla +a  \beta.
$$
It is obvious that $H_a$ is self-adjoint on $L^2(\R^3, \C^4)$
with domain $\mathcal{D}(H_a)=H^1(\R^3, \C^4)$. Under the
assumption $(V_1)$, we see that $\sigma(H_a)=\sigma_c(H_a)=\R
\backslash (-a, a)$, where $\sigma(H_a)$ and $\sigma_c(H_a)$ stand
for the spectrum and the continuous spectrum of the operator $H_a$
on $L^2(\R^3, \C^4)$, respectively. As a consequence, $L^2(\R^3, \C^4)$ has the
following orthogonal decomposition,
$$
L^2(\R^3, \C^4)=L^+ \oplus L^-,
$$
where $H_a$ is positive definite on $L^+$ and it is negative definite on $L^-$.

Let $|H_a|$ be the absolute value of $H_a$ and $|H_a|^{\frac 12}$
be its square root. We now introduce a space
$E=\mathcal{D}(|H_a|^{\frac 12})$ endowed with the following inner product and norm
$$
\langle u,\, v \rangle :=(|H_a|^{\frac 12} u,\, |H_a|^{\frac 12}
v)_2, \quad  \|u\|:=\langle u,\, u
\rangle ^{\frac 12}\quad \mbox{for any} \,\, u,v \in E,
$$
where $(\cdot, \cdot)_2$ denotes the inner product in
$L^2(\R^3, \C^4)$. From \cite[Lemma 7.4]{Ding}, we have that $E
\cong H^{\frac 12}(\R^3, \C^4)$ and the norm $\|\cdot\|$ on $E$ is
equivalent to the usual norm in $H^{\frac 12}(\R^3, \C^4)$. In
addition, we know that the embedding $E \hookrightarrow L^p(\R^3, \C^4)$ is
continuous for any $2 \leq p \leq 3$ and it is locally compact
for any $ 1 \leq p <3$. Since $\sigma(H_a)=\R \backslash (-a, a)$,
then
\begin{align} \label{l2}
a \|u\|_{L^2(\R^3)}^2 \leq \|u\|^2 \quad \mbox{for any} \, \, u
\in E.
\end{align}
Based upon the orthogonal decomposition in $L^2(\R^3, \C^4)$, we have that $E$ possesses the following decomposition,
$$
E=E^+ \oplus E^-,
$$
where $E^+=E \cap L^+$ and $E^-=E \cap L^-$. Thus, for any $u \in
E$, there holds that $u=u^+ +u^-$ for $u^+ \in E^+$ and $u^- \in
E^-$, and the sum is orthogonal with respect to both $\langle
\cdot, \cdot \rangle$ and $(\cdot, \cdot)_2$. Moreover, it follows
from \cite[Proposition 2.1]{DX} that there exists a constant
$d_q>0$ such that for any $q\in[2,3],$
\begin{align} \label{lp}
d_q\|u^{\pm}\|_{L^q(\R^3)} \leq  \|u\| \quad \mbox{for any} \,\, u
\in E.
\end{align}

Let $\varphi \in C^{\infty}(\R^+, [0, 1])$ be such that
$\varphi(t)=1$ if $ 0 \leq t \leq 1$, $\varphi(t)=0$ if $t \geq 2$
and $\varphi'(t) \leq 0$ for any $t \geq 0$. Setting
$b_{\eps}(t)=\varphi(\eps t)$ and $m_{\eps}(t)=\int_{0}^{t}
b_{\eps}(s) \, ds$ for any $t \geq 0$, we then have that following statement.

\begin{lem} \cite[Proposition 2.1] {CLW} \label{mprop}
The functions $b_{\eps}$ and $m_{\eps}$ satisfy the following
properties.
\begin{enumerate}
\item[$(\textnormal{i})$] $t b_{\eps}(t) \leq m_{\eps}(t) \leq t$
for any $t \geq 0$. \item[$(\textnormal{ii})$] There exists $c>0$
such that $m_{\eps}(t) \leq c  / \eps $ for any $t \geq 0$. If $ 0
\leq t  \leq 1 / \eps$, then $b_{\eps}(t)=1$ and $m_{\eps}(t)=t$.
\end{enumerate}
\end{lem}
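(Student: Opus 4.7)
The plan is to read off both claims directly from the defining properties of the cutoff $\varphi$: it is non-increasing on $\R^+$, takes values in $[0,1]$, equals $1$ on $[0,1]$, and vanishes on $[2,\infty)$. Each of these properties transfers to $b_{\eps}$ via the rescaling $t\mapsto \eps t$, and the corresponding facts for $m_{\eps}$ then follow from elementary integration estimates.

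For claim $(\textnormal{i})$, I would first observe that $b_{\eps}$ inherits monotonicity from $\varphi$: since $\varphi'\le 0$ and $b_{\eps}(t)=\varphi(\eps t)$, the function $b_{\eps}$ is non-increasing on $\R^+$. Consequently, for any $s\in[0,t]$ one has $b_{\eps}(s)\ge b_{\eps}(t)$, and integrating over $[0,t]$ yields $m_{\eps}(t)\ge t\,b_{\eps}(t)$. The upper bound $m_{\eps}(t)\le t$ is immediate from $b_{\eps}(s)\le 1$ for every $s\ge 0$.

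For claim $(\textnormal{ii})$, the substitution $u=\eps s$ converts the integral defining $m_{\eps}(t)$ into
\[
m_{\eps}(t)=\frac{1}{\eps}\int_0^{\eps t}\varphi(u)\,du.
\]
Since $\varphi$ vanishes on $[2,\infty)$ and is bounded by $1$, the integral $\int_0^{\eps t}\varphi(u)\,du$ is uniformly controlled by $\int_0^{2}\varphi(u)\,du\le 2$, so $m_{\eps}(t)\le 2/\eps$, and any $c\ge 2$ works. For the second assertion in $(\textnormal{ii})$, when $0\le t\le 1/\eps$ one has $\eps s\in[0,1]$ for every $s\in[0,t]$, whence $\varphi(\eps s)=1$; this directly yields $b_{\eps}(t)=1$ and $m_{\eps}(t)=\int_0^t 1\,ds=t$.

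There is no real obstacle in the argument: the lemma is a direct unpacking of the construction of $\varphi$. The only mild point requiring care is that the constant in $(\textnormal{ii})$ must be independent of both $\eps$ and $t$, which is guaranteed by the change of variables above (the $\eps$-dependence is entirely absorbed into the prefactor $1/\eps$, while the support condition on $\varphi$ supplies the uniform bound on the remaining integral). I would simply record these elementary computations and then move on to the substantive constructions of the modified penalized problem.
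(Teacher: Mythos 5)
Your proof is correct and complete; the monotonicity of $b_{\eps}$ gives the lower bound in (i), the pointwise bound $b_{\eps}\le 1$ gives the upper bound, and the change of variables $u=\eps s$ together with the support and range of $\varphi$ gives (ii) with $c=2$. The paper does not supply a proof of its own but simply cites \cite[Proposition 2.1]{CLW}, and your argument is exactly the direct elementary verification one would expect that reference to contain.
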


According to $(V_2)$, we have that there is $\delta_0 >0$ such
that, for any $y \in \mathcal{M}^{\delta_0}$,  if $B(y, \delta_0)
\backslash\mathcal{M}  \neq \emptyset$, there holds that
\begin{align*} 
\inf_{x \in B(y, \, \delta_0) \backslash \mathcal{M}} \nabla
V(x)\cdot \nabla \mbox{dist}(x, \mathcal{M})>0.
\end{align*}
From $(V_1)$, we know that there exist $0<\theta<1$ and $R_0>0$ large enough determined later
such that $\mathcal{M}^{\delta_0+1} \subset B_{R_0/2}(0)$ and $ |V(x)| \leq \theta a $ for any $|x| \leq R_0 +1.$

Let $\zeta_1 \in C^{\infty}(\R^+, [0, 1])$ be a cut-off function
such that $\zeta(t)=0$ if $t \leq 0$, $\zeta(t) >0$ if $t >0$ and
$\zeta(t) =1$ if $t \geq 1$. We then set
$$
\chi_1(x):=\zeta_1(|x|-R_0) \quad \mbox{for any} \,\, x \in \R^3.
$$
For a function $\phi : \R^3 \to \R$ with
$$
\phi(x)=\frac{1}{1+|x|^4} \quad \mbox{for any} \,\, x \in \R^3,
$$
we define functions $\xi, \xi_1: \R^3 \times \R \to \R$ by
\begin{align*}
\xi(x, t):=\left\{
\begin{aligned}
&0,  \,\, \, \hspace{4cm}  t \leq  \phi(x),\\
&\frac{1}{\phi(x)}\left(t-\phi(x)\right)^2, \quad  \ \ \, \phi(x) <t <2 \phi(x), \\
&2t-3 \phi(x), \,\, \, \, \hspace{2.5cm} t \geq 2\phi(x),
\end{aligned}
\right.
\end{align*}
and
\begin{align*}
\xi_1(x, t):= \int_{-\infty}^t \xi(x, s) \, ds =\left\{
\begin{aligned}
&0,  \, \, \quad  \  \hspace{3.5cm}  t \leq  \phi(x),\\
&\frac{1}{3\phi(x)}\left(t-\phi(x)\right)^3, \quad \, \phi(x) <t <2 \phi(x), \\
&t^2-3 \phi(x) t + \frac 7 3 \phi(x)^2, \quad \qquad  t \geq
2\phi(x).
\end{aligned}
\right.
\end{align*}
We now define the associated penalized functional $Q_{\eps}: E \to \R$ by
\begin{align} \label{defq}
Q_{\eps}(u):= \frac 18 \int_{\R^3} \chi_1(\eps x) V(\eps x) \xi_1(x, |u|) \, dx.
\end{align}
It is standard to check that $Q_{\eps} \in C^1(E, \R)$ and
\begin{align*} 
Q_{\eps}'(u) v=\frac 18 \textnormal{Re} \int_{\R^3} \chi_1(\eps x) V(\eps x) \tilde{\xi}(x, |u|) u \cdot \overline{\psi} \, dx \quad \mbox{for any} \,\, v \in E,
\end{align*}
where $u\cdot v=\sum^4_{i=1}u_iv_i$ for
$u=(u_1,\cdots,u_4)\in\C^4$ and $v=(v_1,\cdots,v_4)\in\C^4$ and
\begin{align*}
\tilde{\xi}(x, t):=\frac{\xi(x, t)}{t}=\left\{
\begin{aligned}
&0,  \quad  \hspace{4cm}  t \leq  \phi(x),\\
&\frac{1}{t\phi(x)}\left(t-\phi(x)\right)^2, \qquad  \phi(x) <t <2 \phi(x), \\
&2- \frac{3 \phi(x)}{t}, \quad  \hspace{2.5cm} \,\,\, t \geq 2\phi(x).
\end{aligned}
\right.
\end{align*}

Let $\zeta_2\in C^{\infty}(\R^+, [0, 1])$ be a cut-off function
such that $\zeta_2(t)=0$ if $t \leq 0$, $\zeta_2(t) >0$ if $t >0$ and
$\zeta_2(t) =1$ if $t \geq \delta_0$, and $\zeta'_2(t) \geq 0$ for any
$t \geq 0$. We then set $\chi_2(x)=\zeta_2(\mbox{dist}(x, \, \mathcal{M}))$
and
\begin{align*}
g_{\eps}(x, t)=\min\left\{h_{\eps}(t), \, \phi(x)\right\}  \quad
\mbox{for any} \,\, x \in \R^3, t \geq 0,
\end{align*}
where
\begin{align*}
h_{\eps}(t)=t^{q-2} + \frac{q}{3} t^{q-2} \left(m_{\eps}(t^2)
\right)^{\frac{3-q}{2}} + \frac{3-q}{3} t^{q} \left(m_{\eps}(t^2)
\right)^{\frac{3-q}{2} -1} b_{\eps}(t^2).
\end{align*}
Observe that
$$
H_{\eps}(t)=\int_{0}^t h_{\eps}(s)s \, ds =\frac{t^q}{q} +
\frac{t^q}{3}\left(m_{\eps}(t^2)\right)^{\frac{3-q}{2}},
$$
then it holds that
\begin{align} \label{arh}
h_{\eps}(t)t^2-q H_{\eps}(t) \geq 0 \quad \mbox {for any} \,\, t
\geq 0.
\end{align}
Thus, from the definition of $g_{\eps}$, we know that
\begin{align}  \label{arg}
g_{\eps}(x, t)t^2-2 G_{\eps}(x, t) \geq 0 \quad  \mbox{for any} \,\, x \in \R^3, t
\geq 0,
\end{align}
where $G_{\eps}(x, t)=\int_{0}^t g_{\eps}(x, s)s \, ds$. Let us define
\begin{align*} 
f_\epsilon(x, t)=\left(1- \chi_2(\epsilon x)\right)
h_{\eps}(t)+\chi_2(\epsilon x){g_{\eps}}(x, t) \quad \mbox{for any}
\,\, x \in \R^3, t \geq 0,
\end{align*}
then
\begin{align*}
\begin{split}
F_\epsilon(x, t)&=\int_{0}^t f_\epsilon(x, s) s \, ds \\ 
& =\left(1- \chi_2(\epsilon x)\right) \left( \frac {t^q}{q} +
\frac{t^{q}}{3} \left(m_{\eps}(t^2) \right)^{\frac{3-q}{2}}
\right) + \chi_2(\epsilon x)  G_{\eps}(x, t).
\end{split}
\end{align*}
We now introduce a modified energy functional $\Gamma_{\eps}:E \to \R$ as
\begin{align} \label{functional}
\Gamma_{\eps}(u)=\frac 12 \left(\|u^+\|^2-\|u^-\|^2 \right) +
\frac 12 \int_{\R^3} V_{\eps}(x) |u|^2 \, dx - Q_{\eps}(u)-\int_{\R^3}
F_{\eps}(x, |u|) \, dx.
\end{align}
It is not hard to deduce that $\Gamma_{\eps}$ is of class $C^1$ on
$E$ and
$$
\Gamma_{\eps}'(u) v= \mbox{Re} \int_{\R^3} \Big(H_a u +
V_{\eps}(x) u -\frac 18 \chi_1(\eps x) V(\eps x) \tilde{\xi}(x, |u|) u-f_{\eps}(x, |u|) u \Big) \cdot \overline{v} \, dx
\,\, \mbox{for any} \,\, v \in E.
$$
Therefore, critical points of $\Gamma_{\eps}$ are solutions of the equation
\begin{align} \label{mdirac}
-\textnormal{i} \alpha \cdot \nabla u + a  \beta u + V(\eps x)
u-\frac 18 \chi_1(\eps x) V(\eps x) \tilde{\xi}(x, |u|) u=f_{\eps}(x, |u|) u.
\end{align}

\section{Existence of semiclassical states} \label{exist}

In this section, our principal aim is to establish the existence
of an infinite sequence of semiclassical states to \eqref{mdirac} with higher energies. To do so, let us first introduce
the following abstract theorem, which is adapted to guarantee the existence of the solutions to \eqref{mdirac}.

\begin{thm} \label{theorem} \cite[Theorem 4.6]{BD}
Let $(X, \|\cdot\|_X)$ be a separable reflexive Banach space with
orthogonal decomposition $X=X^+ \oplus X^-$,  $I \in C^1(X, \R)$
be even and satisfy the Palais-Smale condition. Assume the
following assumptions hold.
\begin{enumerate}
\item [$(\textnormal{i})$] There exist a constant $r_0>0$ and a constant $\rho>0$ such that
$$
\inf_{u \in X^+, \, \|u\|_X=r_0} I(u) \geq \rho.
$$
\item[$(\textnormal{ii})$] There exist a $N$-dimensional subspace
$X_N=\textnormal{span}\{e_1, \cdots, e_N\} \subset X^+$ and a
constant $R_N> r_0$ such that
$$
\sup_{u \in X^- \oplus X_N} I(u)< \infty, \quad \sup_{u \in X^-
\oplus X_N, \, \|u\|_X \geq R_N} I(u)\leq 0.
$$
\item[$(\textnormal{iii})$] Let $\mathcal{S}$ be a countable dense subset of
the dual space of $X^-$, $\mathcal{P}$ be a family of semi-norms
on $X$ consisting of all semi-norms
$$
p_s: X \to \R, \quad p_s(u)=|s(u^-)| + \|u^+\|_X, \quad u=u^+ +
u^- \in X, \,\, s \in \mathcal{S}.
$$
If we denote the induced topology by $(X,
\mathcal{T}_{\mathcal{P}})$ and denote the weak${}^*$ topology on
$X^*$ by $(X^*, \mathcal{T}_{w^*})$, there hold that, for any
$c \in \R$, $I_c=\{u \in X: I(u) \geq c\}$ is
$\mathcal{T}_{\mathcal{P}}$-closed and $I':(I_c,
\mathcal{T}_{\mathcal{P}}) \to (X^*, \mathcal{T}_{w^*})$ is
continuous.
\end{enumerate}
Then $I$ has as least $N$ pairs of critical points $\pm u_j$ with
$$
I(u_j) \in \left[\rho, \sup_{X^- \oplus X_N} I(u)\right] \quad
\mbox{for any} \,\, 1 \leq j \leq N.
$$
\end{thm}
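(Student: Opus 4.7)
The plan is to prove this abstract symmetric linking theorem by combining an equivariant minimax construction over a class of admissible deformations with a deformation lemma adapted to the weaker topology $\mathcal{T}_{\mathcal{P}}$, which is needed because $X^-$ is typically infinite-dimensional and the norm topology there is too strong for the usual Palais--Rabinowitz machinery to apply directly. First I would fix the reference domain $D_N := (X^- \oplus X_N) \cap \overline{B}_{R_N}(0)$ and declare a class $\mathcal{H}$ of admissible deformations: all odd, $\mathcal{T}_{\mathcal{P}}$-continuous maps $h : D_N \to X$ that restrict to the identity on the ``bottom'' $\partial_0 D_N := \{u \in D_N : I(u) \leq 0\}$ (which by (ii) contains the relative boundary of $D_N$ in $X^- \oplus X_N$) and whose $X^-$-component is a compact perturbation of the identity, so that an equivariant intersection property between $h(D_N)$ and the sphere $S_{r_0}^+ := \{u \in X^+ : \|u\|_X = r_0\}$ continues to hold.

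Next I would follow Benci's pseudo-index scheme and define, for each $k = 1,\dots,N$,
\[
c_k := \inf_{h \in \mathcal{H}} \, \sup \bigl\{ I(h(u)) : u \in A \bigr\},
\]
where the inner infimum is taken over symmetric closed subsets $A \subset D_N$ of $\mathbb{Z}_2$-genus at least $N - k + 1$. The chain of bounds $\rho \leq c_1 \leq c_2 \leq \dots \leq c_N \leq \sup_{X^- \oplus X_N} I < \infty$ then follows from (i) and (ii): the upper bound is obtained by taking $h = \mathrm{id}$ together with the boundedness in (ii), while the lower bound $c_k \geq \rho$ uses an equivariant intersection argument of Borsuk--Ulam type, showing that for any $h \in \mathcal{H}$ and any symmetric $A \subset D_N$ of genus at least $N - k + 1 \geq 1$, the image $h(A)$ must meet $S_{r_0}^+$, so that $\sup_A I \circ h \geq \inf_{S_{r_0}^+} I \geq \rho$ by (i).

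I would then show each $c_k$ is a critical value by means of an equivariant deformation lemma in the $\mathcal{T}_{\mathcal{P}}$ topology: if $c_k$ were a regular value, one could construct an odd, $\mathcal{T}_{\mathcal{P}}$-continuous pseudo-gradient flow lowering $I$ on a strip $\{c_k - \delta \leq I \leq c_k + \delta\}$ while remaining admissible, contradicting the definition of $c_k$. Assumption (iii) is precisely what powers this construction: the $\mathcal{T}_{\mathcal{P}}$-closedness of the superlevel sets $I_c$ together with the continuity of $I' : (I_c,\mathcal{T}_{\mathcal{P}}) \to (X^*, \mathcal{T}_{w^*})$ allow one to localize and regularize a pseudo-gradient vector field that is simultaneously $\mathcal{T}_{\mathcal{P}}$-continuous on the relevant strips, and the Palais--Smale condition ensures that almost-critical sequences generated by the flow concentrate near genuine critical points. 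The standard multiplicity argument --- if $c_k = c_{k+1} = \cdots = c_{k+\ell}$, then the critical set at that common level has $\mathbb{Z}_2$-genus at least $\ell + 1$, so contains infinitely many pairs when $\ell \geq 1$ --- produces $N$ pairs $\pm u_j$ with $I(u_j) \in [\rho,\, \sup_{X^- \oplus X_N} I]$.

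The main obstacle I expect is the careful construction of the odd, $\mathcal{T}_{\mathcal{P}}$-continuous pseudo-gradient flow and the corresponding equivariant deformation lemma. On $X^-$ the norm topology is genuinely too strong (closed balls are not compact, and the standard locally Lipschitz pseudo-gradient built from $I'$ need not be $\mathcal{T}_{\mathcal{P}}$-continuous), so one must exploit the metrizability of $\mathcal{T}_{\mathcal{P}}$ on norm-bounded sets, afforded by the countable family of seminorms $\{p_s : s \in \mathcal{S}\}$, in tandem with the precise continuity hypothesis in (iii), to patch together a flow that is simultaneously descending, equivariant, admissible, and well-defined in the weaker topology. This strongly-indefinite deformation technology, developed by Bartsch, Ding and collaborators, is what the theorem ultimately rests on, and is where the bulk of the technical work lies.
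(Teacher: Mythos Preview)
The paper does not prove this statement at all: Theorem~\ref{theorem} is quoted verbatim from \cite[Theorem 4.6]{BD} and used as a black box to produce Theorem~\ref{existence}. There is therefore no proof in the paper to compare your attempt against.

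That said, your sketch is a reasonable outline of the strategy in the original Bartsch--Ding reference: a symmetric minimax scheme over odd, $\mathcal{T}_{\mathcal{P}}$-admissible deformations of the finite-codimensional set $D_N = (X^- \oplus X_N)\cap \overline{B}_{R_N}$, combined with an equivariant deformation lemma built in the weaker $\mathcal{T}_{\mathcal{P}}$ topology, is exactly the content of \cite{BD}. Your identification of assumption (iii) as the hypothesis that makes the $\mathcal{T}_{\mathcal{P}}$-continuous pseudo-gradient flow possible is correct, as is your remark that the bulk of the work lies in that deformation technology. One small point: your displayed definition of $c_k$ is garbled (you write a single $\sup$ over $u\in A$ but then refer to an ``inner infimum'' over $A$); in the actual pseudo-index scheme one takes an infimum over an appropriate class of symmetric sets (or equivalently over admissible deformations) and then a supremum of $I$ over each such set, so you should rewrite that formula carefully if you intend to reproduce the proof rather than cite it.
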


In the following, we shall show that the energy functional $\Gamma_{\eps}$ fulfills
the conditions of Theorem \ref{theorem}. Let us start with verifying the following compactness condition.

\begin{lem} \label{ps}
There exists a constant $\eps_0>0$ such that, for any $0<\eps<\eps_0$, the energy functional $\Gamma_{\eps}$ satisfies the Palais-Smale condition on $E$.
\end{lem}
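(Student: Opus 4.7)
Let $(u_n)\subset E$ be a Palais--Smale sequence for $\Gamma_\eps$ at level $c$, so that $\Gamma_\eps(u_n)\to c$ and $\|\Gamma'_\eps(u_n)\|_{E^*}\to 0$. I would proceed in two main steps: first establish boundedness, then strong convergence via a Brezis--Lieb splitting and the penalization.

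\smallskip
\textbf{Step 1 (Boundedness).} The trick is to compute $\Gamma_\eps(u_n)-\tfrac{1}{2}\Gamma'_\eps(u_n)u_n$; by construction the full quadratic form $\tfrac12(\|u_n^+\|^2-\|u_n^-\|^2)+\tfrac12\int V_\eps|u_n|^2$ cancels, leaving
\[
c+o(\|u_n\|)=\bigl[\tfrac{1}{2}Q'_\eps(u_n)u_n-Q_\eps(u_n)\bigr]+\int\bigl[\tfrac{1}{2}f_\eps(x,|u_n|)|u_n|^2-F_\eps(x,|u_n|)\bigr]\,dx.
\]
Using \eqref{arh} and \eqref{arg}, the nonlinear integrand is pointwise nonnegative and in fact bounded below by $\tfrac{q-2}{2q}|u_n|^q$ on the bulk region $\{\chi_2(\eps x)=0\}$. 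A direct case analysis of the three branches of $\xi$, $\xi_1$ yields $\xi(x,t)t-2\xi_1(x,t)\geq 0$ pointwise, with linear growth in $t$ once $t\geq 2\phi(x)$. This delivers both an $L^q$ bound and control on $\chi_1(\eps\cdot)V(\eps\cdot)\,|u_n|\phi$. To bound $\|u_n\|$ itself, I then test $\Gamma'_\eps(u_n)$ against $u_n^+-u_n^-$; since $(H_a u_n^+,u_n^+)=\|u_n^+\|^2$ and $(H_a u_n^-,u_n^-)=-\|u_n^-\|^2$ with $L^\pm$ mutually $H_a$-orthogonal, one obtains
\[
\|u_n\|^2=\Gamma'_\eps(u_n)(u_n^+-u_n^-)-\mathrm{Re}\!\int V_\eps\bigl(1-\tfrac{\chi_1(\eps\cdot)\tilde\xi}{8}\bigr)u_n\cdot\overline{(u_n^+-u_n^-)}+\mathrm{Re}\!\int f_\eps u_n\cdot\overline{(u_n^+-u_n^-)}.
\]
The decisive point is that the effective coefficient $V_\eps(1-\tfrac{\chi_1\tilde\xi}{8})$ has $|\cdot|$ bounded above by $|V_\eps|<a$, and, once combined with the $L^2$ inequality $a\|u\|_{L^2}^2\leq\|u\|^2$, yields a cross term absorbable by $\|u_n\|^2$ with a fractional constant strictly less than $1$ plus a remainder of order $(\int|u_n|^q)^{2/q}\|u_n\|$. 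The $L^q$ control from the previous identity then closes the estimate and gives $\sup_n\|u_n\|<\infty$.

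\smallskip
\textbf{Step 2 (Strong convergence).} Pass to a subsequence $u_n\wto u$ in $E$, $u_n\to u$ in $L^p_{\mathrm{loc}}(\R^3,\C^4)$ for $2\leq p<3$, and $u_n\to u$ a.e. Because the truncation by $m_\eps$ and $b_\eps$ caps $h_\eps(t)$ at subcritical growth (indeed $h_\eps(t)\leq C_\eps t^{q-2}$ for $t$ large once $\eps>0$ is fixed), a standard dominated-convergence argument together with Vitali's theorem shows $\Gamma'_\eps(u)=0$. Set $w_n:=u_n-u\wto 0$. The Brezis--Lieb lemma applied to $|w_n|^q$, $H_\eps(|w_n|)$, $G_\eps(x,|w_n|)$, $\xi_1(x,|w_n|)$ gives $\Gamma_\eps(w_n)\to c-\Gamma_\eps(u)$ and $\Gamma'_\eps(w_n)\to 0$ in $E^*$. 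It then suffices to prove $w_n\to 0$. Assume otherwise; testing $\Gamma'_\eps(w_n)$ against $w_n^+-w_n^-$ in the same way as above shows that a non-vanishing $L^q$ mass must survive, so by Lions' concentration-compactness there exist $y_n\in\R^3$ and $\delta>0$ with $\int_{B_1(y_n)}|w_n|^q\geq\delta$. Local compactness rules out $y_n$ bounded; hence $|y_n|\to\infty$. But then $\eps y_n$ lies eventually in $\{\chi_1(\eps\cdot)=1\}$, and the penalization lower bound from Step 1 forces $\tfrac12 Q'_\eps(w_n)w_n-Q_\eps(w_n)\gtrsim \phi(y_n)\int_{B_1(y_n)}|w_n|$ whenever $|w_n|\geq 2\phi$, while $|V_\eps|$ stays uniformly positive near $\pm a$ so that $V_\eps$ does not change the sign of the dominant contribution; this, combined with the gap $a-|V_\eps|(1-\tfrac{\tilde\xi}{8})\gtrsim a$, provides a contradiction with the PS control and shows $\|w_n\|\to 0$.

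\smallskip
\textbf{Main obstacle.} The hard part is the degenerate far region, where $|V_\eps|\nearrow a$ and the unperturbed operator $H_a+V_\eps$ has no uniform spectral gap. The purpose of the penalization $Q_\eps$ is precisely to restore an effective gap (after subtracting $\tfrac18\chi_1 V_\eps\tilde\xi u$ the effective mass term shrinks to at most $\tfrac34|V_\eps|$ on $\{|u|\geq 2\phi\}$) and simultaneously to prevent non-vanishing bubbles centred at points $y_n\to\infty$. Balancing these two uses of the penalization with the sign-indefinite potential $V_\eps$ is the main technical point; one has to keep careful track of the piecewise behaviour of $\xi,\xi_1$ on $\{|u|<\phi\}$, $\{\phi<|u|<2\phi\}$ and $\{|u|\geq 2\phi\}$, and exploit the $L^2$-smallness afforded by the decay factor $\phi(y_n)\to 0$ along any escaping subsequence.
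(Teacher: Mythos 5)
Your Step 1 runs on the same rails as the paper's proof — test $\Gamma_\eps'(u_n)$ against $u_n^+-u_n^-$, use the Ambrosetti--Rabinowitz structure \eqref{arh}--\eqref{arg} via $\Gamma_\eps(u_n)-\tfrac12\Gamma_\eps'(u_n)u_n$ to control the nonlinear cross term, and use the penalization to tame $V_\eps$ — but the decisive absorption claim is not actually justified by the reason you give. You write that $|V_\eps(1-\tfrac18\chi_1\tilde\xi)|$ is bounded above by $|V_\eps|<a$ and that this, via $a\|u\|_{L^2}^2\le\|u\|^2$, yields a coefficient strictly less than $1$. That inference is false: under $(V_1)$ the pointwise gap $a-|V_\eps|$ decays to zero, so $\sup_x|V_\eps(x)|=a$, and the naive bound gives factor exactly $1$, which cannot be absorbed. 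The actual proof needs the three-way split that you gesture at only in your ``Main obstacle'' paragraph: on the set $\Omega_{n,2}=\{|u_n|\ge 3\phi\}$ with $\chi_1=1$ the penalization forces $\tilde\xi\ge 1$, dropping the factor to at most $7/8$; on $\R^3\setminus\Omega_{n,2}$ one has $|u_n|<3\phi$ with $\phi$ integrable, so the contribution is only $O(\|u_n\|)$; and where $\chi_1<1$ (i.e.\ $|\eps x|\le R_0+1$, a compact region after rescaling) the paper uses $|V(\eps x)|\le\theta a$ with $\theta<1$. The net coefficient is $a\min\{7/8,\theta\}$, and only then is absorption possible. Without carrying out that case analysis, Step 1 does not close.

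In Step 2 you take a genuinely different route: a Brezis--Lieb splitting plus Lions concentration-compactness to locate an escaping bubble, then a contradiction using the decay of $\phi$. The paper is more elementary: it directly estimates $\Gamma_\eps'(u_n)(z_n^+-z_n^-)-\Gamma_\eps'(u)(z_n^+-z_n^-)=o_n(1)$ term by term. The $(1-\chi_2(\eps\cdot))h_\eps$ contribution is supported on a fixed compact set (for fixed $\eps$) and dies by local compactness; the $\chi_2 g_\eps$ and $\tilde\xi$ contributions are weighted by $\phi$ and die by dominated convergence; and the $V_\eps$ term is handled by exactly the same $\Omega_{n,2}$ split as in Step 1, giving $\|z_n\|^2\le o_n(1)$ with no need for concentration-compactness or a vanishing alternative. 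Your sketch of the contradiction — in particular the claimed lower bound $\tfrac12 Q_\eps'(w_n)w_n-Q_\eps(w_n)\gtrsim\phi(y_n)\int_{B_1(y_n)}|w_n|$ and the assertion $a-|V_\eps|(1-\tfrac{\tilde\xi}{8})\gtrsim a$ — is not verified and the second inequality overstates the gap (it is at best of order $a/8$ on $\{\tilde\xi\ge1\}$); moreover the profile decomposition machinery is overkill here, since the truncation already confines the dangerous part of the nonlinearity to a compact set. So even if your Step 2 could be made rigorous, the paper's direct estimate is both simpler and closer in spirit to the structure set up by the penalization.
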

\begin{proof}
Supposing $\{u_n\} \subset E$ is a Palais-Smale sequence for
$\Gamma_{\eps}$, i.e. $\{\Gamma_{\eps}(u_n)\} \subset \R$ is
bounded and $\Gamma_{\eps}'(u_n)=o_n(1)$, we shall prove that
$\{u_n\}$ admits a convergent subsequence in $E$. To do this, we first need to
derive that $\{u_n\}$ is bounded in $E$. Note that $u_n=u_n^+ + u_n^-$ and
\begin{align} \label{boundedness}
\begin{split}
o_n(1)\|u_n\|  & \geq  \Gamma_{\eps}'(u_n)(u_n^+-u_n^-) \\
&=\|u_n\|^2 + \mbox{Re} \int_{\R^3} V(\eps x) u_n \cdot \overline{u_n^+-u_n^-} \, dx - \mbox{Re} \int_{\R^3} f_{\eps}(x, |u_n|) u_n \cdot \overline{u_n^+-u_n^-} \, dx \\
& \quad - \frac 18 \mbox{Re} \int_{\R^3} \chi_1(\eps x) V(\eps x) \tilde{\xi}(x, |u_n|) u_n \cdot \overline{u_n^+-u_n^-} \, dx \\
& = \|u_n\|^2 -\mbox{Re} \int_{\R^3}\left(\left(1 -\chi_2(\eps x)\right) h_{\eps}(|u_n|)+\chi_2(\eps x) g_{\eps}(x, |u_n|) \right)  u_n \cdot \overline{u_n^+-u_n^-} \, dx  \\
& \quad +\int_{\R^3} V(\eps x) \left( 1-\frac 18\chi_1(\eps x)
\tilde{\xi}(x, |u_n|) \right)
u_n \cdot \overline{u_n^+-u_n^-}\,
dx.
\end{split}
\end{align}
We are now going to estimate the last two terms in the right hand side of \eqref{boundedness}.
Let us begin with treating the second term in the right hand side of \eqref{boundedness}.
Since $0 \in \mathcal{M}$ and $g_{\eps}(x, t) \leq \phi(x)$ for any $x \in \R^3$ and $t \in \R$,
see the definition of $g_{\eps}$, it then follows from the definition of $\chi_2$, H\"older's inequality and \eqref{l2} that
\begin{align} \label{gest}
\left| \int_{\R^3} \chi_2(\eps x) g_{\eps}(x, |u_n|) u_n \cdot \overline{u_n^+-u_n^-} \, dx \right| \leq c \eps^4 \|u_n\|^2.
\end{align}
From the definition of $h_{\eps}$ and Lemma \ref{mprop}, we know that $h_{\eps}(t) \to 0$ as $t \to 0^+$.
This means that, for any $\eps>0$, there is a constant $r>0$ such that $h_{\eps}(t) \leq \eps$  for any $0 \leq t \leq r$. Let us define
$$
\Omega_{n, 1}:=\left\{x \in \R^3: |u_n(x)| > r\right\}.
$$
In view of \eqref{l2}, \eqref{lp} and H\"older's inequality, then
\begin{align} \label{esth}
\begin{split}
&\left| \int_{\R^3} \left(1 -\chi_2(\eps x)\right) h_{\eps}(|u_n|) u_n \cdot \overline{u_n^+-u_n^-} \, dx \right| \\
&\leq \frac{\eps}{a} \|u_n\|^2 +  \int_{\Omega_{n, 1}}
\left(1 -\chi_2(\eps x)\right) h_{\eps}(|u_n|) |u_n| |u_n^+-u_n^-| \, dx \\
& \leq \frac{\eps}{a} \|u_n\|^2  +\frac{1}{d_3}
\left(\int_{\Omega_{n, 1}}\left(1 -\chi_2(\eps x)\right)
\left(h_{\eps}(|u_n|) |u_n| \right)^{\frac 32}\, dx \right)^{\frac
23} \|u_n^+-u_n^-\|.
\end{split}
\end{align}
Using again the definition of $h_{\eps}$ and Lemma \ref{mprop}, we get that
there is a constant $c_r>0$ such that $h_{\eps}(t) \leq c_r t$ for any $ t
>r$. This then shows that
\begin{align*}
(h_{\eps}(t)t)^{\frac 3 2} \leq c_{r}^{\frac 12} h_{\eps}(t) t^2
\quad \mbox{for any} \,\, t >r.
\end{align*}
On the other hand, from \eqref{arh}, we know that
\begin{align*}
\frac 12 h_{\eps}(t) t^2- H_{\eps}(t) \geq \frac{q -2}{2q}
h_{\eps}(t)t^2 \quad \mbox{for any} \,\, t\geq 0.
\end{align*}
Thus there holds that
\begin{align} \label{hHar}
(h_{\eps}(t)t)^{\frac 3 2} \leq \frac{2q c_r^{\frac 12}}{q-2}\left(\frac 12 h_{\eps}(t) t^2-
H_{\eps}(t)\right) \quad \mbox{for any} \,\, t >r.
\end{align}
Notice that $|V(x)| \leq a$ for any $x \in \R^3$. By the
definitions of $\xi_1$, $\tilde{\xi}$, H\"older's inequality and
\eqref{l2}, it is easy to see that there exists a constant $c>0$
such that
\begin{align} \label{qest}
\begin{split}
\hspace{-0.5cm}\frac 12 Q_{\eps}'(u_n) u_n-Q_{\eps}(u_n)
&= \frac 1 8 \int_{\R^3} \chi_1(\eps x) V(\eps x) \left(\frac 12 \tilde{\xi}(x, |u_n|) |u_n|^2 - \xi_1(x, |u_n|)\right) \, dx  \\
& \geq -c\int_{\R^3} |u_n| \phi \, dx  \geq  -c \|u_n\|.
\end{split}
\end{align}
Observe that
\begin{align*}
c(1 + \|u_n\|) \geq \Gamma_{\eps}(u_n) -\frac 12 \Gamma_{\eps}'(u_n)u_n &=\int_{\R^3} \left(\frac 12 f_{\eps}(x, |u_n|) |u_n|^2 -F_{\eps}(x, |u_n|)\right) \, dx \\
& \quad + \frac 12 Q_{\eps}'(u_n) u_n-Q_{\eps}(u_n).
\end{align*}
It then follows from \eqref{arg}, \eqref{hHar} and \eqref{qest} that
\begin{align*}
\int_{\Omega_{n, 1}} \left(1-\chi_2(\eps x)\right)\left(h_{\eps}(|u_n|) |u_n|\right)^{\frac 32} \, dx & \leq  c \int_{\Omega_{n, 1}}  \left(1 -\chi_2(\eps x)\right)\left(\frac 12 h_{\eps}(|u_n|) |u_n|^2 -H_{\eps}(|u_n|)\right) \, dx \\
& \leq  c\int_{\R^3} \left(\frac 12 f_{\eps}(x, |u_n|) |u_n|^2 -F_{\eps}(x, |u_n|)\right) \, dx\\
& \leq c(1 + \|u_n\|)  +Q_{\eps}(u_n) - \frac 12 Q_{\eps}'(u_n) u_n\\
& \leq c(1 + \|u_n\|).
\end{align*}
Using \eqref{esth}, we now have that
\begin{align*}
\hspace{-0.5cm}\left| \int_{\R^3} \left(1 -\chi_2(\eps x)\right) h_{\eps}(|u_n|) u_n \cdot \overline{u_n^+-u_n^-} \, dx \right|  \leq \frac{\eps}{a} \|u_n\|^2  + c(1 + \|u_n\|)^{\frac 2 3} \|u_n\|.
\end{align*}
This along with \eqref{gest} gives rise to
\begin{align} \label{esth11}
\begin{split}
&-\int_{\R^3}\left(\left(1 -\chi_2(\eps x)\right) h_{\eps}(|u_n|)+\chi_2(\eps x) g_{\eps}(x, |u_n|) \right)  u_n \cdot \overline{u_n^+-u_n^-} \, dx \\
& \geq -\left( c \eps^4+\frac {\eps}{a}\right) \|u_n\|^2 - c(1 + \|u_n\|)^{\frac 2 3} \|u_n\|.
\end{split}
\end{align}
We next deal with the last term in the right hand side of \eqref{boundedness}.
To do this, we define
$$
\Omega_{n, 2}:=\left\{x \in \R^3: |u_n(x)| \geq 3 \phi(x) \right\}.
$$
Observe that
\begin{align*}
&\int_{\R^3}| V(\eps x)| \left( 1-\frac 18\chi_1(\eps x)
\tilde{\xi}(x, |u_n|) \right) |u_n| \cdot |u_n^+-u_n^-|\,
dx \\
&\leq \int_{\R^3}| V(\eps x)| \left( 1-\frac 18 \tilde{\xi}(x,
|u_n|) \right)\chi_1(\eps x) |u_n| \cdot |u_n^+-u_n^-|\,
dx+\int_{\R^3}| V(\eps x)| \left( 1-\chi_1(\eps x) \right) |u_n|
\cdot |u_n^+-u_n^-|\, dx.
\end{align*}
We are now going to estimate each term in the right hand side of the inequality above. In light of the definition of $\tilde{\xi}$, then $ 1 \leq
\tilde{\xi}(x, |u_n(x)|) \leq 2$ for any $x \in \Omega_{n, 2}$, Therefore, we get that
\begin{align*} 
&\int_{\R^3} |V(\eps x)| \left( 1-\frac 18 \tilde{\xi}(x,
|u_n|) \right)\chi_1(\eps x) |u_n| \cdot |u_n^+-u_n^-|\,
dx \\
&=\int_{\Omega_{n, 2}} | V(\eps x)| \left( 1-\frac 18
\tilde{\xi}(x, |u_n|) \right)\chi_1(\eps x) |u_n| \cdot
|u_n^+-u_n^-|\, dx\\
&\quad + \int_{\R^3 \backslash\Omega_{n, 2}} | V(\eps x)| \left(
1-\frac 18 \tilde{\xi}(x, |u_n|) \right)\chi_1(\eps x) |u_n| \cdot
|u_n^+-u_n^-|\, dx\\
& \leq \frac{7a}{8}\int_{\Omega_{n, 2}} |u_n| \cdot |u_n^+-u_n^-|\, dx + 3a\int_{\R^3 \backslash\Omega_{n,
2}}\phi \cdot |u_n^+-u_n^-|\, dx.
\end{align*}
In addition, notice that  $ |V(x)| \leq \theta a $ if $1-\chi_1(  x)\neq0$, then
\begin{align*} 
&\int_{\R^3}| V(\eps x)| \left( 1-\chi_1(\eps x) \right) |u_n|
\cdot |u_n^+-u_n^-|\, dx \leq  \theta a \int_{\Omega_{n, 2}} |u_n|
\cdot |u_n^+-u_n^-|\, dx + 3a\int_{\R^3\setminus\Omega_{n, 2}} \phi \cdot |u_n^+-u_n^-|\,
dx.
\end{align*}
As a consequence, we derive that
\begin{align}\label{jjjnhyytggff}
\begin{split}
\int_{\R^3}| V(\eps x)| \left( 1-\frac 18\chi_1(\eps x)
\tilde{\xi}(x, |u_n|) \right) |u_n| \cdot |u_n^+-u_n^-|\,
dx 
\leq \min\left\{\frac {7} {8}, \theta\right\}\|u_n\|^2+c\|u_n\|,
\end{split}
\end{align}
where we used H\"older's inequality and \eqref{l2}. This together with \eqref{boundedness} and
\eqref{esth11} yields that
$$
\min \left\{1- \theta, \frac 1 8\right\} \|u\|^2 -c \left(\frac{\eps} {a}
+ \eps^4\right) \|u_n\|^2 \leq c \left(1 + \|u_n\| \right)^{\frac 2 3} \|u_n\|+ c( \|u_n\|+1).
$$
Thanks to $0<\theta <1$,  then there exists a constant $\eps_0>0$ such that, for any $0<\eps< \eps_0$, $\{u_n\} $
is bounded in $E$. As a consequence, we know that there exists $u \in E$ such that $u_n \wto u$ in $E$ as
$n \to \infty$. In addition, we find that $\Gamma_{\eps}'(u)=0$.

We now prove that, up to a subsequence, $u_n \to u$ in $E$ as $n \to \infty$. Letting
$z_n=u_n-u$, we then have that
$$
\Gamma_{\eps}'(u_n)(z_n^+-z_n^-)=o_n(1), \quad
\Gamma_{\eps}'(u)(z_n^+-z_n^-)=0.
$$
Thus there holds that
\begin{align*} 
o_n(1)&=\Gamma_{\eps}'(u_n)(z_n^+-z_n^-)-\Gamma_{\eps}'(u)\left(z_n^+-z_n^-\right) \\
&=\|z_n\|^2 + \mbox{Re} \int_{\R^3} V(\eps x) z_n \cdot \overline{\left(z_n^+-z_n^-\right)} \, dx\\
& \quad -\mbox{Re} \int_{\R^3} \left(1 -\chi_2(\eps x)\right) \left(h_{\eps}(|u_n|) u_n-h_{\eps}(|u|)u \right) \cdot \overline{\left(z_n^+-z_n^-\right)} \,dx \\
& \quad -\mbox{Re} \int_{\R^3} \chi_2(\eps x) \left(g_{\eps}(x, |u_n|) u_n-g_{\eps}(x, |u|)u \right) \cdot \overline{\left(z_n^+-z_n^-\right)} \,dx \\
& \quad -\frac 18\mbox{Re} \int_{\R^3} \chi_1(\eps x) V(\eps x) \left(\tilde{\xi}(x, |u_n|) u_n -  \tilde{\xi}(x, |u|) u \right)\cdot \overline{\left(z_n^+-z_n^-\right)} \, dx,
\end{align*}
from which we further get that
\begin{align} \label{conv}
\begin{split}
o_n(1)&=\|z_n\|^2 + \mbox{Re} \int_{\R^3} V(\eps x) z_n \cdot \overline{\left(z_n^+-z_n^-\right)} \, dx\\
& \quad -\mbox{Re} \int_{\R^3} \left(1 -\chi_2(\eps x)\right) \left(h_{\eps}(|u_n|) u_n-h_{\eps}(|u|)u \right) \cdot \overline{\left(z_n^+-z_n^-\right)} \,dx \\
& \quad -\mbox{Re} \int_{\R^3} \chi_2(\eps x) g_{\eps}(x, |u_n|) z_n \cdot \overline{\left(z_n^+-z_n^-\right)} \, dx\\
& \quad - \mbox{Re} \int_{\R^3} \chi_2(\eps x) \left(g_{\eps}(x, |u_n|)-g_{\eps}(x, |u|)\right)u \cdot \overline{\left(z_n^+-z_n^-\right)}
\, dx \\
& \quad -\frac 18\mbox{Re} \int_{\R^3} \chi_1(\eps x) V(\eps x) \tilde{\xi}(x, |u_n|) z_n\cdot \overline{\left(z_n^+-z_n^-\right)} \, dx \\
& \quad -\frac 18\mbox{Re} \int_{\R^3} \chi_1(\eps x) V(\eps x)
\big(\tilde{\xi}(x, |u_n|) - \tilde{\xi}(x, |u|)\big) u\cdot
\overline{\left(z_n^+-z_n^-\right)} \, dx.
\end{split}
\end{align}
Due to $u_n \wto  u$ in $E$ as $n \to \infty$, by the definitions of $g_{\eps}$ and $\tilde{\xi}$, we then
know that
$$
\left(g_{\eps}(x, |u_n|)-g_{\eps}(x, |u|)\right)\left(z_n^+-z_n^-\right) \wto 0 \quad \mbox{in}\,\, L^2(\R^3, \C^4) \,\,\mbox{as}\,\, n \to \infty
$$
and
$$
(\tilde{\xi}(x, |u_n|) - \tilde{\xi}(x, |u|))\left(z_n^+-z_n^-\right) \wto 0 \quad \mbox{in}\,\, L^2(\R^3, \C^4) \,\,\mbox{as}\,\, n \to \infty.
$$
This then leads to
$$
\left|\int_{\R^3} \chi_2(\eps x) \left(g_{\eps}(x, |u_n|)
-g_{\eps}(x, |u|)\right)u \cdot \overline{\left(z_n^+-z_n^-\right)} \,
dx \right|=o_n(1)
$$
and
$$
\left|\int_{\R^3} \chi_1(\eps x) V(\eps x) \big(\tilde{\xi}(x, |u_n|) - \tilde{\xi}(x, |u|)\big) u\cdot \overline{\left(z_n^+-z_n^-\right)} \, dx \right|=o_n(1).
$$
Moreover, by the definition of $g_{\eps}$, it holds that
$$
\left|\int_{\R^3} \chi_2(\eps x) g_{\eps}(x, |u_n|) z_n \cdot \overline{\left(z_n^+-z_n^-\right)} \, dx \right| =o_n(1).
$$
Hence, from \eqref{conv}, we arrive at
\begin{align} \label{sconv}
\begin{split}
&\|z_n\|^2 + \int_{\R^3} V(\eps x) \left(1 - \frac 18 \chi_1(\eps x) \tilde{\xi}(x ,|u_n|)\right) z_n \cdot \overline{\left(z_n^+-z_n^-\right)} \, dx  \\
&\leq \left| \int_{\R^3} \left(1 -\chi_2(\eps x)\right)
\left(h_{\eps}(|u_n|) u_n-h(|u|)u \right) \cdot
\overline{\left(z_n^+-z_n^-\right)} \,dx \right|  + o_n(1).
\end{split}
\end{align}
We are now ready to estimate the term in the right hand side of \eqref{sconv}. Taking into account the mean value theorem, we get that there is a
function $\theta_n$ such that
\begin{align*}
&\left|\int_{\R^3} \left(1 -\chi_2(\eps x)\right) \left(|u_n|^{q-2} u_n-|u|^{q-2}u \right) \cdot \overline{\left(z_n^+-z_n^-\right)} \,dx \right| \\
&= (q-1)\left|\int_{\R^3} \left(1 -\chi_2(\eps x)\right) |\theta_n|^{q-2} z_n \cdot \overline{\left(z_n^+-z_n^-\right)} \,dx \right| \\
& \leq (q-1)\int_{\R^3} \left(1 -\chi_2(\eps x)\right)
|\theta_n|^{q-2} |z_n||z_n^+-z_n^-| \, dx.
\end{align*}
Since, for any $\eps>0$, $\mbox{supp}\, (1 -\chi_2(\eps x)) \subset \R^3$ is
bounded and $z_n \to 0$ in $L^q_{\textnormal{loc}}(\R^3,
\C^4)$ as $n \to \infty$, by H\"older's inequality, we then
obtain that
$$
\left|\int_{\R^3} \left(1 -\chi_2(\eps x)\right) \left(|u_n|^{q-2}
u_n-|u|^{q-2}u \right) \cdot \overline{\left(z_n^+-z_n^-\right)}
\,dx \right|=o_n(1).
$$
Applying again the mean value theorem, we deduce from Lemma
\ref{mprop} that
\begin{align*}
&\left|\int_{\R^3} \left(1 -\chi_2(\eps x)\right) \left(|u_n|^{q-2}
(m_{\eps}(|u_n|^2))^{\frac{3-q}
{2}}u_n-|u|^{q-2}(m_{\eps}(|u|^2))^{\frac{3-q}{2}}u \right) \cdot \overline{\left(z_n^+-z_n^-\right)} \,dx \right| \\
& \leq  (q-1)  \left |\int_{\R^3} \left(1 -\chi_2(\eps x)\right) |\theta_n|^{q-2}(m_{\eps}(|\theta_n|^2))^{\frac{3-q}{2}} z_n \cdot \overline{\left(z_n^+-z_n^-\right)}\, dx \right| \\
& \quad + (3-q)  \left|\int_{\R^3}  \left(1 -\chi_2(\eps x)\right) |\theta_n|^{q-2} (m_{\eps}(|\theta_n|^2))^{\frac{3-q}{2} -1} b_{\eps}(|\theta_n|^2) |\theta_n|^2 z_n \cdot \overline{\left(z_n^+-z_n^-\right)}\, dx \right| \\
& \leq 2  \int_{\R^3} \left(1 -\chi_2(\eps x)\right) |\theta_n|^{q-2}(m_{\eps}(|\theta_n|^2))^{\frac{3-q}{2}} |z_n ||z_n^+-z_n^-|\, dx \\
& \leq \frac{2c}{\eps^{\frac{3-q}{2}}}\int_{\R^3} \left(1
-\chi_2(\eps x)\right) |\theta_n|^{q-2}|z_n ||z_n^+-z_n^-|\, dx,
\end{align*}
from which we also know that, for any $\eps>0$,
\begin{align*}
&\left|\int_{\R^3} \left(1 -\chi_2(\eps x)\right) \left(|u_n|^{q-2}
(m_{\eps}(|u_n|^2))^{\frac{3-q}
{2}}u_n-|u|^{q-2}(m_{\eps}(|u|^2))^{\frac{3-q}{2}}u \right) \cdot \overline{\left(z_n^+-z_n^-\right)} \,dx \right|=o_n(1).
\end{align*}
Similarly, we can derive that
\begin{align*}
& \left|\int_{\R^3}  \left(1 -\chi_2(\eps x)\right) \left(
|u_n|^{q} (m_{\eps}(|u_n|^2))^{\frac{3-q}
{2}-1} b_{\eps}(|u_n|^2)u_n -|u|^{q}(m_{\eps}(|u|^2))^{\frac{3-q}{2}-1} b_{\eps}(|u|^2)u \right) \right.  \\
&\hspace{7.5em} \cdot \overline{\left(z_n^+-z_n^-\right)}
\,dx \bigg| =o_n(1).
\end{align*}
Therefore,
\begin{align} \label{term1}
\left|\int_{\R^3} \left(1 -\chi_2(\eps x)\right)
\left(h_{\eps}(|u_n|) u_n-h(|u|)u \right) \cdot
\overline{\left(z_n^+-z_n^-\right)} \,dx \right|=o_n(1).
\end{align}
We now turn to estimate the second term in the left hand side of
\eqref{sconv}. Reasoning as \eqref{jjjnhyytggff}, we can show that
\begin{align*}
& \int_{\R^3} |V(\eps x)| \left(1 - \frac 18 \chi_1(\eps x) \tilde{\xi}(x ,|u_n|)\right)| z_n| \cdot |z_n^+-z_n^-| \, dx \\
&\leq a\min\left\{\frac 7 8, \theta\right\}\int_{\Omega_{n, 2}} |z_n|
\cdot |z_n^+-z_n^-|\, dx + 2a\int_{\R^3 \backslash\Omega_{n,
2}}\phi \cdot |z_n^+-z_n^-|\, dx.
\end{align*}
Recall that $z_n \wto 0$ in $E$ as $n \to \infty$, then $z_n (x)
\to 0$  and $z^+_n-z^-_n\rightarrow 0$ in $L^2_{loc}(\R^2, \C^2)$.
In addition, by the definition of $\Omega_{n, 2}$, we know that
$|z_n(x)|^2 \leq |u(x)|^2 + 9 |\phi(x)|^2$ for any $x \in \R^3
\backslash \Omega_{n, 2}$. It then follows from the Lebesgue
dominated convergence theorem that
$$
 \int_{\Omega_{n, 2}} |z_n| \cdot |z_n^+-z_n^-|\, dx
\leq c \int_{\R^3 \backslash \Omega_{n, 2}} |z_n|^2 \, dx=o_n(1).
$$
And by  $z^+_n-z^-_n\rightarrow 0$ in $L^2_{loc}(\R^2, \C^2)$, we
get that
$$\int_{\R^3}\phi \cdot |z_n^+-z_n^-|\, dx=o_n(1).$$ This jointly with
\eqref{sconv} and \eqref{term1} gives that
$$
 \|z_n\|^2 \leq o_n(1).
$$
Hence we derive that $u_n \to u$ in $E$ as $n \to \infty$, and the proof is completed.
\end{proof}

In what follows, we shall check that $\Gamma_{\eps}$ satisfies the remaining conditions in Theorem \ref{theorem}.
By Lemma \ref{mprop} and the definition of $h_{\eps}$, it is
clear that there exists a constant $c>0$ such that
$$
h_{\eps}(t) t \leq \frac 18 \min\left\{1-\theta, \frac 18 \right\} t + c t^2 \quad
\mbox{for any} \,\, t \geq 0.
$$
Additionally, it follows from \cite[Lemma 3.1]{WZ1} that
$$
\left|\int_{\R^3} V(\eps x) |u|^2 dx -2 Q_{\eps}(u)\right| \leq \max\left\{\theta, \frac 7 8\right\} \|u\|^2 + c \eps^5.
$$
Therefore, by the definition of $g_{\eps}$, we obtain that
$$
\Gamma_{\eps}(u) \geq \frac 14 \min\left\{1-\theta, \frac 18 \right\} \|u\|^2 -c \|u\|^3-c \eps^5 \quad \mbox{for any}\,\, u \in E^+.
$$
This then yields the following statement.

\begin{lem} \label{existi}
There are $r_0>0$ and $\rho>0$ such that
\begin{align*}
\inf_{u \in E^+, \, \|u\|=r_0} \Gamma_{\eps}(u) \geq \rho \quad
\mbox{for any} \, \, \eps>0 \,\, \mbox{small enough}.
\end{align*}
\end{lem}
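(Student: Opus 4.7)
The plan is to read off the conclusion from the quadratic-minus-cubic lower bound
\[
\Gamma_{\eps}(u) \geq \tfrac{1}{4}\alpha_0 \|u\|^2 - c\|u\|^3 - c\eps^5, \qquad u\in E^+,
\]
displayed immediately above the statement, where $\alpha_0:=\min\{1-\theta,1/8\}>0$. For $u\in E^+$ one has $\|u^+\|^2=\|u\|^2$ and $u^-=0$, so the only contributions that can work against the positive quadratic term $\tfrac{1}{2}\|u\|^2$ are the potential-minus-penalization piece $\tfrac{1}{2}\int V(\eps x)|u|^2\,dx - Q_{\eps}(u)$ and the nonlinear piece $-\int F_{\eps}(x,|u|)\,dx$. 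The first is controlled by the invoked estimate of \cite{WZ1}, which erodes the coefficient of $\|u\|^2$ from $\tfrac{1}{2}$ down to $\tfrac{1}{2}\alpha_0$ up to an additive $c\eps^5$. The second I would control by integrating the pointwise bound on $h_{\eps}(t)t$ and using $g_{\eps}(x,t)\leq \phi(x)$ together with the embeddings \eqref{l2} and \eqref{lp}; together these dominate $\int F_{\eps}(x,|u|)\,dx$ by $\tfrac{1}{4}\alpha_0\|u\|^2 + c\|u\|^3$, which is precisely what yields the displayed inequality.

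With the lower bound in hand, the lemma reduces to a two-step calibration of parameters. First I restrict to the sphere $\|u\|=r_0$ in $E^+$ to obtain
\[
\Gamma_{\eps}(u) \geq \tfrac{\alpha_0}{4} r_0^2 - c r_0^3 - c\eps^5,
\]
and then fix $r_0>0$ small enough that $c r_0 \leq \alpha_0/8$, absorbing the cubic term into half of the quadratic and leaving $\Gamma_{\eps}(u) \geq \tfrac{\alpha_0}{8} r_0^2 - c\eps^5$. Finally I choose $\eps_0>0$ so that $c\eps_0^5 \leq \tfrac{\alpha_0}{16} r_0^2$; then for every $0<\eps<\eps_0$ this gives the uniform lower bound $\Gamma_{\eps}(u) \geq \tfrac{\alpha_0}{16} r_0^2 =: \rho>0$ on the sphere, which is the desired statement.

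The main difficulty has, in effect, already been taken care of in the preparatory estimates rather than in the lemma itself: the positivity of the coefficient $\alpha_0$ is exactly what the quantitative gap $|V(x)|\leq \theta a$ on $B_{R_0+1}(0)$ (coming from $(V_1)$ together with the choice of $R_0$) buys us, and the penalization $Q_{\eps}$ was engineered precisely so that the bound $\bigl|\int V(\eps x)|u|^2\,dx - 2Q_{\eps}(u)\bigr|\leq \max\{\theta,7/8\}\|u\|^2 + c\eps^5$ holds uniformly even though $|V|$ is only required to stay strictly below $a$ pointwise. Once that quadratic coercivity on $E^+$ is granted and the Sobolev critical term has been tamed by the truncation $g_{\eps}\leq \phi$, what remains is just the parameter-tuning step above and there is no genuine obstacle.
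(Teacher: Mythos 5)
Your proposal is correct and is essentially the paper's own argument: the lemma is an immediate consequence of the displayed bound $\Gamma_{\eps}(u)\geq \tfrac14\min\{1-\theta,\tfrac18\}\|u\|^2-c\|u\|^3-c\eps^5$ on $E^+$, and you simply make explicit the calibration of $r_0$ and $\eps_0$ that the paper leaves implicit. You have also correctly identified the two preparatory ingredients (the potential-minus-penalization bound from Lemma 3.1 of \cite{WZ1} and the pointwise bound on $h_{\eps}(t)t$ together with $g_{\eps}\leq\phi$), so there is nothing to add.
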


Let $\{e_n\} \subset E^+$ be an orthogonal basis and
$E_N=\mbox{span}\{e_1, \dots, e_N\}$. Since $h_{\eps}(t) \geq t^{q-2}$ for any $t \geq 0$,
arguing as the proof of \cite[Lemma 3.2]{WZ}, then we easily get the following result.

\begin{lem} \label{existii}
For any $N \in \N^+$, there exist $\eps_N>0$ and $R_N>0$ with
$R_N>r_0$ such that, for any $0<\eps< \eps_N$,
$$
\sup_{ u \in E^- \oplus E_N, \, \|u\| \geq R_N} \Gamma_{\eps}(u)
\leq 0.
$$
In particular,
\begin{align} \label{ebdd}
\sup_{ u \in E^- \oplus E_N} \Gamma_{\eps}(u)  \leq 2 R_N^2.
\end{align}
\end{lem}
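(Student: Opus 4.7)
The plan is to argue by contradiction. Suppose for some admissible $\eps$ there exists a sequence $\{u_n\}\subset E^-\oplus E_N$ with $\|u_n\|\to\infty$ and $\Gamma_\eps(u_n)>0$. Writing $u_n=v_n+u_n^-$ with $v_n\in E_N\subset E^+$ and $u_n^-\in E^-$, and setting $M:=\max\{\theta,7/8\}<1$, I insert the bound $\bigl|\int_{\R^3}V(\eps x)|u|^2\,dx-2Q_\eps(u)\bigr|\le M\|u\|^2+c\eps^5$ (from \cite[Lemma 3.1]{WZ1}, quoted just above the statement) into $\Gamma_\eps(u_n)>0$ and use $\|u_n^-\|^2=\|u_n\|^2-\|v_n\|^2$ to obtain
\[
0\;\le\;\int_{\R^3}F_\eps(x,|u_n|)\,dx\;\le\;\|v_n\|^2-\frac{1-M}{2}\|u_n\|^2+\tfrac{1}{2}c\eps^5.
\]
Dividing by $\|u_n\|^2$ gives $\|v_n\|^2/\|u_n\|^2\ge(1-M)/2+o_n(1)$. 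Finite-dimensionality of $E_N$ extracts (along a subsequence) $v_n/\|u_n\|\to\hat v$ strongly in $E$ with $\|\hat v\|^2\ge(1-M)/2>0$, and $u_n^-/\|u_n\|\wto w^-$ weakly in $E$. Using $F_\eps(x,t)\ge(1-\chi_2(\eps x))t^q/q$ (which follows from $h_\eps(t)\ge t^{q-2}$) in the same inequality yields, with $w_n:=u_n/\|u_n\|$,
\[
\int_{\R^3}(1-\chi_2(\eps x))|w_n|^q\,dx\;\le\;\frac{q(1+M)}{2\|u_n\|^{q-2}}+o_n(1)\longrightarrow 0,
\]
since $q>2$.

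Next, since $0\in\mathcal{M}$, I pick $r_*>0$ with $B_{r_*}(0)\subset\mathcal{M}$, so $\chi_2(\eps x)=0$ on $B_{r_*/\eps}(0)$. Combined with the previous display and the local compactness $E\hookrightarrow L^q_{\mathrm{loc}}(\R^3,\C^4)$, the weak limit $w:=\hat v+w^-$ satisfies $w=0$ a.e.\ on $B_{r_*/\eps}(0)$; equivalently $w^-=-\hat v$ pointwise there. The $L^2$-orthogonality $(\hat v,w^-)_2=0$, split at $B_{r_*/\eps}(0)$ and combined with Cauchy--Schwarz, gives
\[
\int_{B_{r_*/\eps}(0)}|\hat v|^2\,dx\;=\;\textnormal{Re}\int_{\R^3\setminus B_{r_*/\eps}(0)}w^-\cdot\overline{\hat v}\,dx\;\le\;\|w^-\|_{L^2(\R^3)}\,\bigl\|\hat v\,\mathbf{1}_{\R^3\setminus B_{r_*/\eps}(0)}\bigr\|_{L^2(\R^3)}.
\]
Compactness of the unit sphere $S_N\subset E_N$ in $L^2(\R^3)$ supplies the uniform bounds $\lambda_N:=\inf_{v\in S_N}\|v\|_{L^2(\R^3)}^2>0$ and $\delta_N(R):=\sup_{v\in S_N}\|v\,\mathbf{1}_{\R^3\setminus B_R(0)}\|_{L^2(\R^3)}\to 0$ as $R\to\infty$. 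For $\eps$ small enough that $\delta_N(r_*/\eps)^2\le\lambda_N/2$, the last display then yields $\|\hat v\|\le C\,\delta_N(r_*/\eps)$, which tends to $0$ and contradicts $\|\hat v\|^2\ge(1-M)/2>0$. This produces $\eps_N$ and $R_N$ satisfying the first assertion.

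I expect the main difficulty to lie in this last orthogonality step. A naive pointwise estimate $|v_n+u_n^-|^q\ge c|v_n|^q-|u_n^-|^q$ leaves a residual of order $\|u_n^-\|^q$ (coming from the infinite-dimensional $E^-$) which cannot be absorbed into $-\tfrac{1-M}{2}\|u_n\|^2$ because $q>2$; this reflects the genuine infinite-dimensionality of the negative subspace. The trick is to avoid pointwise bounds on $F_\eps$ entirely and to extract from the vanishing of $w_n$ in $L^q(B_{r_*/\eps}(0))$ the pointwise relation $w^-=-\hat v$ there; combining this with $L^2$-orthogonality between $E_N\subset E^+$ and $E^-$ and the uniform $L^2$-tail decay on the compact sphere $S_N$ then forces $\hat v$ itself to be small. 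Finally, \eqref{ebdd} is immediate from the first assertion together with the crude bound $\Gamma_\eps(u)\le\tfrac{1+M}{2}\|u\|^2+\tfrac{1}{2}c\eps^5$ on $\{\|u\|\le R_N\}$, possibly after enlarging $R_N$.
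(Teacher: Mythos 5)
Your proof is a careful contradiction argument and the key trick --- extracting $w^-=-\hat v$ on a large ball, then playing the $L^2$-orthogonality $(\hat v,w^-)_2=0$ against the uniform $L^2$-tail decay on the compact unit sphere of $E_N$ --- is correct and nicely handles exactly the obstacle you identify (that $\|u_n^-\|^q$ cannot be absorbed into $-\tfrac{1-M}{2}\|u_n\|^2$). Each estimate checks out: the bound from \cite[Lemma 3.1]{WZ1} with $M=\max\{\theta,7/8\}<1$, the orthogonality $\|u_n^-\|^2=\|u_n\|^2-\|v_n\|^2$, the lower bound $F_\eps(x,t)\geq(1-\chi_2(\eps x))t^q/q$, the local compactness of $E\hookrightarrow L^q_{\mathrm{loc}}$ for $q<3$, and the fact that $E^-$ is weakly closed so $w^-\in E^-$. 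The paper gives no proof of its own here, merely citing \cite[Lemma 3.2]{WZ}, so a direct comparison is not possible, but your argument is self-contained and fills that gap.

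One small imprecision in the logic deserves attention. As you state it, you fix a single admissible $\eps$ at the outset of the contradiction, and the conclusion is that for each $\eps<\eps_N$ the set $\{u\in E^-\oplus E_N:\Gamma_\eps(u)>0\}$ is bounded, with a threshold $R(\eps)$ a priori depending on $\eps$. The lemma, however, requires $R_N$ to be \emph{uniform} over $\eps\in(0,\eps_N)$, i.e.\ $R_N$ is fixed before $\eps$ is quantified. The fix is painless and requires no new idea: negate the statement to produce sequences $\eps_n<\eps_N$ (not necessarily $\to 0$) and $u_n$ with $\|u_n\|\geq n$ and $\Gamma_{\eps_n}(u_n)>0$, then rerun your argument with $\eps$ replaced by $\eps_n$. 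All your constants $\bigl((1-M)/2,\ \lambda_N,\ \delta_N(\cdot),\ \text{the }c\eps^5\text{ correction}\bigr)$ are uniform for $\eps<\eps_N$, and since $\eps_n<\eps_N$ implies $B_{r_*/\eps_n}(0)\supset B_{r_*/\eps_N}(0)$, one may work with the fixed ball $B_{r_*/\eps_N}(0)$ throughout; the contradiction then produces a genuine uniform $R_N$. With this adjustment the proof is complete, including the final crude bound giving \eqref{ebdd}.
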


Let $\mathcal{S}$ be a countable dense subset of the dual space of
$E^-$ and
$$
\mathcal{D}=\{d_s: s \in \mathcal{S}, \, d_s(u, v)=|s(u-v)|, \, u,
v \in E^-\}
$$
be a family of semi-metrics on $E^-$. Let $\mathcal{P}$ be a
family of semi-norms on $E$ consisting of all semi-norms
$$
p_s: E \to \R, \quad p_s(u)=|s(u^-)| + \|u^+\|, \quad u=u^+ + u^-
\in E, s \in \mathcal{S}.
$$
Thus $\mathcal{P}$ induces a product topology on $E$ given by
the $\mathcal{D}$-topology on $E^-$ and the norm topology on
$E^+$. We denote this topology by $(E, \mathcal{T}_{\mathcal{P}})$
and the weak${}^*$ topology on $E^*$ by $(E^*,
\mathcal{T}_{w^*})$. By using some ingredients in the proof of
\cite[Lemma 3.3]{WZ1}, it is not difficult to find that, for any $\eps>0$, $\Gamma_{\eps}$ is sequentially lower semi-continuous
and it is weakly sequentially continuous. Therefore, by
\cite[Proposition 4.1]{BD}, we can get
the following.

\begin{lem}
For any $\eps>0$, there hold that, for any $c \in \R$,
$\Gamma_{\eps, c}=\{u \in E: \Gamma_{\eps}(u) \geq c\}$ is
$\mathcal{T}_{\mathcal{P}}$-closed and
$\Gamma_{\eps}':(\Gamma_{\eps, c}, \mathcal{T}_{\mathcal{P}}) \to
(X^*, \mathcal{T}_{w^*})$ is continuous.
\end{lem}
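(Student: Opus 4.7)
The plan is to invoke \cite[Proposition 4.1]{BD}, which reduces the lemma to two sequential continuity properties of $\Gamma_\eps$: sequential semi-continuity with respect to the topology $\mathcal{T}_{\mathcal{P}}$ on $E$ (which delivers closedness of $\Gamma_{\eps, c}$), and weak sequential continuity of $\Gamma_\eps'$ on $E$ (which, combined with the same proposition, delivers the continuity of $\Gamma_\eps'$ from $(\Gamma_{\eps,c}, \mathcal{T}_{\mathcal{P}})$ to $(E^*, \mathcal{T}_{w^*})$). I would follow the blueprint of \cite[Lemma 3.3]{WZ1}, adapted to accommodate the critical nonlinearity $|u|u$ and the truncations $h_\eps, g_\eps, \tilde\xi$ introduced in Section \ref{pre}.

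For the semi-continuity part, suppose $u_n \to u$ in $(E, \mathcal{T}_{\mathcal{P}})$. One first reduces to bounded sequences; then strong convergence $u_n^+ \to u^+$ in $E$ together with $s(u_n^-) \to s(u^-)$ for every $s$ in the dense set $\mathcal{S}$ forces $u_n^- \wto u^-$, so that $u_n \wto u$ in $E$. I would analyze the four pieces of $\Gamma_\eps$ separately. The indefinite quadratic piece $\frac 12(\|u^+\|^2 - \|u^-\|^2)$ behaves appropriately, since $\|u_n^+\|^2 \to \|u^+\|^2$ by strong convergence and $\|u^-\|^2 \leq \liminf \|u_n^-\|^2$ by weak lower semi-continuity of the norm. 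The combination $\frac 12 \int V(\eps x)|u_n|^2\, dx - Q_\eps(u_n)$ is controlled via the estimate from \cite[Lemma 3.1]{WZ1} already used in the proof of Lemma \ref{ps}, which bounds $|\int V(\eps x)|u_n|^2\, dx - 2Q_\eps(u_n)|$ by a small fraction of $\|u_n\|^2$ plus $O(\eps^5)$; the cross-term $\mbox{Re}\int V(\eps x) u_n^+ \cdot \overline{u_n^-}\, dx$ converges by strong-weak duality. Finally, $F_\eps(x, t) \geq 0$ together with pointwise a.e.\ convergence along a subsequence lets Fatou's lemma handle the nonlinear term.

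For weak sequential continuity of $\Gamma_\eps'$, fix $v \in E$ and assume $u_n \wto u$ in $E$. The linear contributions $\mbox{Re}\int H_a u_n \cdot \overline v\, dx$ and $\mbox{Re}\int V(\eps x) u_n \cdot \overline v\, dx$ converge because $H_a v$ and $V(\eps x) v$ lie in the dual space. The penalization contribution $\mbox{Re}\int \chi_1(\eps x) V(\eps x) \tilde\xi(x, |u_n|) u_n \cdot \overline v\, dx$ is manageable because $|\tilde\xi(x, t)| \leq 2$ and $\tilde\xi(x, t) = 0$ whenever $t \leq \phi(x)$, which provides the integrable envelope needed for dominated convergence once local $L^p$-compactness is invoked. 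The nonlinear term $\mbox{Re}\int f_\eps(x, |u_n|) u_n \cdot \overline v\, dx$ splits via $\chi_2$: where $\chi_2(\eps x) = 1$, the bound $g_\eps(x, t) \leq \phi(x)$ provides an $L^2$-dominator; where $\chi_2$ vanishes, the support is contained in a fixed bounded set for each $\eps$, and local $L^p$-compactness of $u_n \to u$ for subcritical $p$ together with the $\eps$-dependent polynomial bound on $h_\eps$ allows passage to the limit via dominated convergence.

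The principal technical obstacle is the joint treatment of the indefinite potential term $\int V(\eps x)|u|^2\, dx$ and the penalization $Q_\eps$: neither is weakly continuous in isolation, since $V$ is merely bounded with $|V|$ reaching $a$ at infinity under the degeneracy in $(V_1)$; but the combination engineered in Section \ref{pre} absorbs the non-compact tail into the coercive portion of the quadratic form, exactly as exploited in the boundedness proof of Lemma \ref{ps}. Once this cancellation is in hand, the remaining checks reduce to routine use of local compactness of the subcritical embedding, the bound $|\tilde\xi| \leq 2$, and the integrable envelope $\phi \in L^1(\R^3) \cap L^\infty(\R^3)$.
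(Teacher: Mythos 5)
Your proposal follows the same route as the paper, which itself disposes of this lemma in a single sentence by appealing to \cite[Proposition 4.1]{BD} together with the argument of \cite[Lemma 3.3]{WZ1}; your treatment of the quadratic indefinite piece (strong convergence on $E^+$, weak lower semi-continuity of $\|\cdot\|$ on $E^-$), the use of Fatou on $-\int F_\eps$ owing to $F_\eps\ge 0$, and the local compactness plus dominated convergence argument for the weak sequential continuity of $\Gamma_\eps'$ all match what the cited references do.

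There is, however, a genuine gap in the step you yourself identify as the principal obstacle: the semi-continuity of $\tfrac 12 \int V(\eps x)|u_n|^2\,dx - Q_\eps(u_n)$. You propose to handle it via the bound from \cite[Lemma 3.1]{WZ1}, namely $\bigl|\int V(\eps x)|u|^2\,dx - 2Q_\eps(u)\bigr| \le \max\{\theta, 7/8\}\|u\|^2 + c\,\eps^5$, but this is a \emph{uniform coercivity} estimate, used in the proof of Lemma \ref{ps} to bound Palais--Smale sequences, and it says nothing about the behaviour of the quantity along a $\mathcal{T}_{\mathcal{P}}$-convergent sequence. A bound on the modulus of a functional does not establish its upper or lower semi-continuity. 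What is actually required is a structural argument: after expanding $|u_n|^2 = |u_n^+|^2 + 2\,\mathrm{Re}(u_n^+\cdot\overline{u_n^-}) + |u_n^-|^2$ and passing to the limit in the first two terms (these are continuous in $\mathcal{T}_{\mathcal{P}}$ by strong convergence of $u_n^+$ and strong--weak duality), one must still show that the remaining combination $-\tfrac 12\|u_n^-\|^2 + \tfrac 12\int V(\eps x)|u_n^-|^2\,dx - Q_\eps(u_n)$ has the correct one-sided limit. The pairing $-\tfrac 12\|v\|^2 + \tfrac 12\int V(\eps x)|v|^2\,dx$ is the negative of a nonnegative quadratic form on $E^-$ (because $\|v\|^2 \ge a\|v\|_{L^2}^2$ and $|V|<a$ pointwise), hence weakly upper semi-continuous; the $Q_\eps$ term requires a separate convergence analysis exploiting the pointwise cutoff $\xi_1(x,t)=0$ for $t\le\phi(x)$ with $\phi\in L^1\cap L^\infty$, and cannot simply be absorbed by the coercivity bound. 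Appealing to the cancellation "exactly as exploited in the boundedness proof of Lemma \ref{ps}" conflates two different things: that proof establishes a priori bounds, not semi-continuity. Until this step is carried out rigorously, the closedness of $\Gamma_{\eps,c}$ in $\mathcal{T}_{\mathcal{P}}$ is not actually verified.
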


Taking into account Theorem \ref{theorem}, we now have the following statement.

\begin{thm} \label{existence} Let $\epsilon_N$ be  determined in Lemma  \ref{existii}.
For any $0<\eps< \eps_N$, $\Gamma_{\eps}$ admits $N$-pairs
critical points $\pm u_{j,\eps}$ with
\begin{align} \label{ebdd1}
\Gamma_{\eps}(u_{j,\eps}) \in \left[\rho, \,\, \sup_{u \in E^-
\oplus E_N} \Gamma_{\eps} (u)\right] \quad \mbox{for any} \,\,
j=1, \dots, N.
\end{align}
\end{thm}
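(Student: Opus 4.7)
The plan is to apply the abstract linking theorem (Theorem \ref{theorem}) directly to $\Gamma_\eps$ on the Hilbert space $E = E^+ \oplus E^-$, since all of its hypotheses have essentially been set up by the preceding lemmas. The only thing that is not yet explicitly stated is that $\Gamma_\eps$ is even, so I would begin the proof by recording this: each constituent of $\Gamma_\eps$ in \eqref{functional} depends on $u$ only through $|u|$, $|u^+|$ and $|u^-|$ (note that $\xi_1(x,|u|)$, $F_\eps(x,|u|)$ and the quadratic forms are all invariant under $u \mapsto -u$), hence $\Gamma_\eps(-u) = \Gamma_\eps(u)$. Together with the $C^1$ regularity already observed after \eqref{functional}, this is enough to put $\Gamma_\eps$ into the framework of Theorem \ref{theorem}.

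Next, I would verify the four assumptions of Theorem \ref{theorem} by invoking the lemmas already proved. Possibly after shrinking $\eps_N$ so that $\eps_N \leq \eps_0$ (where $\eps_0$ is the constant from Lemma \ref{ps}), the Palais--Smale condition holds for every $0<\eps<\eps_N$ by Lemma \ref{ps}. Hypothesis (i), with a constant $r_0>0$ and $\rho>0$ independent of $\eps$, is exactly Lemma \ref{existi}. Hypothesis (ii), with the $N$-dimensional subspace $E_N = \mathrm{span}\{e_1,\dots,e_N\} \subset E^+$ and the threshold $R_N>r_0$, is Lemma \ref{existii}, whose bound \eqref{ebdd} also guarantees that $\sup_{u\in E^-\oplus E_N}\Gamma_\eps(u)<\infty$. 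Finally, hypothesis (iii) concerning the $\mathcal{T}_{\mathcal{P}}$-closedness of the sublevel sets $\Gamma_{\eps,c}$ and the $(\mathcal{T}_{\mathcal{P}}, \mathcal{T}_{w^*})$-continuity of $\Gamma_\eps'$ is the content of the lemma immediately preceding Theorem \ref{existence}.

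Having matched every hypothesis of the abstract theorem, I would conclude by quoting Theorem \ref{theorem} to obtain $N$ pairs of critical points $\pm u_{j,\eps}$ of $\Gamma_\eps$, $1 \leq j \leq N$, with critical values in $\bigl[\rho,\, \sup_{u\in E^-\oplus E_N}\Gamma_\eps(u)\bigr]$, which is precisely the conclusion \eqref{ebdd1}.

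There is no real obstacle to this step: all the analytic work (boundedness and convergence of Palais--Smale sequences, the quantitative linking geometry, and the topological continuity conditions) has been carried out in the earlier lemmas. The only bookkeeping point is to make sure that the final $\eps_N$ is the minimum of the thresholds produced by Lemmas \ref{ps}, \ref{existi} and \ref{existii}, which I would state explicitly at the start of the proof.
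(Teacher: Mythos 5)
Your proposal is correct and follows the paper's implicit proof exactly: the paper states Theorem \ref{existence} as an immediate consequence of Theorem \ref{theorem}, with its four hypotheses supplied by Lemmas \ref{ps}, \ref{existi}, \ref{existii}, and the unlabeled lemma preceding the statement. Your additional observations (that $\Gamma_\eps$ is even, and that the final $\eps_N$ should be taken as the minimum of the thresholds from Lemmas \ref{ps}, \ref{existi}, and \ref{existii}) are the correct bookkeeping that the paper leaves tacit.
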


\section{Profile decomposition of solution sequences and exclusion of blowing-up} \label{estimate}

First of all, let us establish boundedness of the critical points $\{u_{j,\eps}\} \subset E$ obtained in Theorem \ref{existence}.

\begin{lem} \label{sbdd}
For any $N \in \N^+$, there exist a constant $\rho_0>0$ and a constant $\eta_N>0$ such
that, for any $0<\eps<\eps_N$,
$$
\rho_0 \leq \|u_{j,\eps}\| \leq \eta_N \quad \mbox{for any} \,\, 1 \leq j \leq N,
$$
where $\eps_N>0$ is determined in Theorem \ref{existence}.
\end{lem}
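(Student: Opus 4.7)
The plan is to handle the upper and lower bounds separately, using in an essential way the two-sided control on the critical values given by \eqref{ebdd1}.

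For the upper bound, I would replay the Palais-Smale boundedness computation from Lemma \ref{ps} with $u_n$ replaced by $u_{j,\eps}$, exploiting that $\Gamma_{\eps}'(u_{j,\eps})=0$ so that in place of an $o_n(1)\|u_{j,\eps}\|$ error I get exactly zero when testing against $u_{j,\eps}^+-u_{j,\eps}^-$. Carrying out the same splitting of the nonlinear term into the sets $\Omega_{n,1}$ and its complement, the same penalization estimate \eqref{qest}, and the same bound \eqref{jjjnhyytggff} for the $V_{\eps}$-term, one arrives at an inequality of the form
\begin{align*}
\min\!\left\{1-\theta,\tfrac18\right\}\|u_{j,\eps}\|^{2}
-c\!\left(\tfrac{\eps}{a}+\eps^{4}\right)\|u_{j,\eps}\|^{2}
\leq c\bigl(1+\Gamma_{\eps}(u_{j,\eps})+\|u_{j,\eps}\|\bigr)^{\tfrac{2}{3}}\|u_{j,\eps}\|
+c(1+\|u_{j,\eps}\|),
\end{align*}
where the $\Gamma_{\eps}(u_{j,\eps})$ on the right-hand side appears via the identity $\Gamma_{\eps}(u_{j,\eps})-\tfrac12\Gamma_{\eps}'(u_{j,\eps})u_{j,\eps}=\int(\tfrac12 f_{\eps}|u_{j,\eps}|^{2}-F_{\eps})\,dx+\tfrac12 Q_{\eps}'(u_{j,\eps})u_{j,\eps}-Q_{\eps}(u_{j,\eps})$ used to control $(h_{\eps}(|u_{j,\eps}|)|u_{j,\eps}|)^{3/2}$ on $\Omega_{j,\eps,1}$ through \eqref{hHar}. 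Invoking \eqref{ebdd1} together with \eqref{ebdd} bounds $\Gamma_{\eps}(u_{j,\eps})$ by $2R_{N}^{2}$, after which the inequality above yields a uniform bound $\|u_{j,\eps}\|\leq\eta_{N}$ provided $0<\eps<\eps_{N}$ is small enough (possibly shrinking $\eps_{N}$), since the prefactor $\min\{1-\theta,1/8\}-c(\eps/a+\eps^{4})$ is then strictly positive.

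For the lower bound, I would start from the lower estimate $\Gamma_{\eps}(u_{j,\eps})\geq\rho>0$ in \eqref{ebdd1} and read $\Gamma_{\eps}$ as
\begin{align*}
\Gamma_{\eps}(u)=\tfrac12\|u^{+}\|^{2}-\tfrac12\|u^{-}\|^{2}
+\tfrac12\!\left(\int_{\R^{3}}V(\eps x)|u|^{2}\,dx-2Q_{\eps}(u)\right)
-\int_{\R^{3}}F_{\eps}(x,|u|)\,dx.
\end{align*}
Using $F_{\eps}\geq 0$, dropping the negative $-\tfrac12\|u^{-}\|^{2}$, and invoking the bound $|\int V(\eps x)|u|^{2}\,dx-2Q_{\eps}(u)|\leq\max\{\theta,7/8\}\|u\|^{2}+c\eps^{5}$ quoted right before Lemma \ref{existi}, one gets $\Gamma_{\eps}(u)\leq C\|u\|^{2}+c\eps^{5}$ for every $u\in E$. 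Applied to $u=u_{j,\eps}$ and combined with $\Gamma_{\eps}(u_{j,\eps})\geq\rho$, this yields $\|u_{j,\eps}\|^{2}\geq (\rho-c\eps^{5})/C\geq\rho_{0}^{2}$ once $\eps$ is small enough that $c\eps^{5}\leq\rho/2$.

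The main (only) obstacle is the upper bound: one has to be careful that the Palais-Smale boundedness argument of Lemma \ref{ps}, which was stated asymptotically, yields an effective bound depending only on the uniform energy ceiling $\sup_{E^{-}\oplus E_{N}}\Gamma_{\eps}\leq 2R_{N}^{2}$ coming from Lemma \ref{existii}, and not on the individual critical point; this is why $\eta_{N}$ is allowed to depend on $N$. The lower bound is comparatively cheap and requires only a cubic/$L^{2}$-type estimate on $\Gamma_{\eps}$ near the origin.
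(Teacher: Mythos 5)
Your proposal is correct and follows essentially the same route as the paper: the paper's own proof is a two-line sketch that derives the lower bound by contradiction from $\Gamma_\eps(u_{j,\eps})\geq\rho$ in \eqref{ebdd1} and the upper bound by replaying the boundedness portion of the Palais--Smale argument from Lemma \ref{ps} with the help of \eqref{ebdd}--\eqref{ebdd1}, which is exactly what you do, with the two genuinely useful additions spelled out (the $o_n(1)\|u_n\|$ error vanishes identically because $\Gamma_\eps'(u_{j,\eps})=0$, and the energy ceiling $c(1+\|u_n\|)$ is replaced by the uniform-in-$j$ bound $2R_N^2$). The only cosmetic difference is in the lower bound, where the paper argues informally by ``otherwise $\Gamma_\eps(u_{j,\eps})=o_\eps(1)$'' while you give the explicit quantitative estimate $\Gamma_\eps(u)\leq C\|u\|^2+c\eps^5$ via the bound quoted just before Lemma \ref{existi}; both are valid.
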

\begin{proof}
From \eqref{ebdd1}, we can get that there exists a constant $\rho_0>0$ such that $\|u_{j, \eps}\| \geq \rho_0$.
Otherwise, it is easy to show that $\Gamma_{\eps}(u_{j,\eps})=o_{\eps}(1)$. This is impossible.
Using \eqref{ebdd}, \eqref{ebdd1} and following the ideas of the
proof of boundedness of the Palais-Smale sequence of Lemma
\ref{ps}, we can obtain that $\|u_{j, \eps}\| \leq \eta_N$. Thus the proof is completed.
\end{proof}

Taking advantage of Lemma \ref{sbdd}, we then have the following profile decomposition with respect to $u_{j, \eps}$.

\begin{lem}\label{cmvnvhhguf7fuufss}
Let $\{\eps_n\} \subset \R^+$ with $\eps_n \to 0$ as $ n \to
\infty$, then, for any $1 \leq j \leq N$, $u_{j ,\eps_n}$ has
a profile decomposition
\begin{align} \label{pd}
u_{j, \eps_n}=\sum_{i \in \Lambda_1} U_{j ,i}(\cdot -x_{j, i, n})
+ \sum_{i \in \Lambda_{\infty}} \sigma_{j, i, n} U_{j,
i}(\sigma_{j, i, n}(\cdot-x_{j, i, n})) + r_n,
\end{align}
where $\{x_{j ,i ,n}\} \subset \R^3$ for any $i \in \Lambda_{1} \cup \Lambda_{\infty}$, $\{\sigma_{j, i, n} \} \subset \R^+$
satisfies $\lim_{n \to \infty} \sigma_{j ,i, n}
=\infty$ for any $i \in \Lambda_{\infty}$, $\Lambda_{1}, \Lambda_{\infty}$ are finite sets and the
numbers of the sets are bounded from above by an integer depending
only on $N$. Moreover,
\begin{enumerate}
\item[$(1)$] for any $i \in \Lambda_1$, $u_{j, \eps_n}(\cdot +
x_{j, i, n}) \wto U_{j, i} \neq 0 $ in $H^{1/2}(\R^3, \C^4)$ as $n
\to \infty$ and for any $i \in \Lambda_{\infty}$,
$$
\sigma_{j, i, n}^{-1}u_{j, \eps_n}(\sigma_{j, i, n}^{-1}\cdot +
x_{j, i, n}) \wto U_{j, i} \neq 0 \,\, \mbox{in} \,\,
\dot{H}^{1/2}(\R^3, \C^4) \,\, \mbox{as}\,\,  n \to \infty,
$$
where the homogeneous Sobolev space $\dot{H}^{1/2}(\R^3, \C^4)$ is
defined by
$$
\dot{H}^{1/2}(\R^3, \C^4):=\left\{u \in L^3(\R^3, \C^4) : (-\Delta
)^{1/4} u \in  L^2(\R^3, \C^4)\right\}
$$
with the inner product $(u ,v) =((-\Delta)^{1/4} u, (-\Delta)^{1/4} v)_2$
and the norm $\|u\|_{\dot{H}^{1/2}}^2=(u, u)$ for any $u, v \in \dot{H}^{1/2}(\R^3, \C^4)$.
\item[$(2)$] For any $i, i' \in \Lambda_1 $ with $i \neq i'$,
$|x_{j, i, n}-x_{j, i', n}| \to \infty$ as $n \to \infty$ and for
any $i, i' \in \Lambda_{\infty}$ with $i \neq i'$,
$$
\frac{\sigma_{j, i, n}}{\sigma_{j, i', n}}+\frac{\sigma_{j, i',
n}}{\sigma_{j, i, n}}+ \sigma_{j, i, n}\sigma_{j, i', n}|x_{j, i,
n}-x_{j, i', n}|^2 \to \infty \,\, \mbox{as} \,\, n \to \infty.
$$
\item[$(3)$] There holds that
$$
\sum_{i \in \Lambda_1 \cup \Lambda_{\infty}} \int_{\R^3} |U_{j
,i}|^3 \, dx \leq \liminf_{n \to \infty} \int_{\R^3} |u_{j, \eps_n}|^3 \, dx.
$$
\item[$(4)$] $r_n \to 0$ in $L^3(\R^3, \C^4)$ as $n \to \infty$.
\end{enumerate}
\end{lem}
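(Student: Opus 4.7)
The plan is to establish the profile decomposition by an iterative bubble-extraction argument modelled on Gérard's concentration-compactness theorem for fractional Sobolev embeddings, adapted to the $\C^4$-valued $H^{1/2}$ setting required here.

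First I would invoke Lemma \ref{sbdd} to get the uniform bound $\|u_{j,\eps_n}\| \leq \eta_N$, so that $\{u_{j,\eps_n}\}$ is bounded in $E \cong H^{1/2}(\R^3,\C^4)$ and hence, by the critical embedding, in $L^3(\R^3,\C^4)$. Starting from $w_n^{(0)}:=u_{j,\eps_n}$, I would run the following loop. Given the current remainder $w_n^{(k)}$, test whether there exist $\{\sigma_n\}\subset\R^+$ and $\{x_n\}\subset\R^3$ such that $\sigma_n^{-1} w_n^{(k)}(\sigma_n^{-1}\cdot + x_n)$ admits a nonzero weak limit $U$ in $\dot H^{1/2}(\R^3,\C^4)$. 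If, up to a subsequence, $\{\sigma_n\}$ remains bounded then we may renormalize to $\sigma_n\equiv 1$, and the control of $\|\cdot\|_{L^2}$ by $\||H_a|^{1/2}\cdot\|_{L^2}$ via \eqref{l2} upgrades the convergence to weak convergence in $H^{1/2}$, so the profile is placed in $\Lambda_1$. Otherwise every admissible choice forces $\sigma_n\to\infty$, and the profile is placed in $\Lambda_\infty$. In either case subtract the corresponding bubble from $w_n^{(k)}$ to define $w_n^{(k+1)}$ and iterate.

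The orthogonality statements in (2) are built into the algorithm: a failure of the translation-orthogonality in $\Lambda_1$, or of the scale-translation orthogonality $\sigma_{j,i,n}/\sigma_{j,i',n}+\sigma_{j,i',n}/\sigma_{j,i,n}+\sigma_{j,i,n}\sigma_{j,i',n}|x_{j,i,n}-x_{j,i',n}|^2\to\infty$ in $\Lambda_\infty$, would allow two consecutive profiles to be recombined into a single profile carrying strictly more weak mass, contradicting the extremal choice in the extraction. Property (3) follows by iterating the Brezis-Lieb type splitting $\|w_n^{(k)}\|_{L^3}^3 = \|U_{j,i}\|_{L^3}^3 + \|w_n^{(k+1)}\|_{L^3}^3 + o_n(1)$ at every step, the orthogonality ensuring that cross terms vanish. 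Finiteness of $\Lambda_1\cup\Lambda_\infty$ is forced by the fact that each extracted $U_{j,i}$ is a nontrivial weak solution of a fixed limiting Dirac equation (the massive one $-\textnormal{i}\alpha\cdot\nabla U+a\beta U+V(\eps x_{j,i,n})U = |U|^{q-2}U+|U|U$ type for $i\in\Lambda_1$, the massless scale-invariant critical one $-\textnormal{i}\alpha\cdot\nabla U = |U|U$ type for $i\in\Lambda_\infty$), and any such nontrivial solution carries a uniform positive amount of $\dot H^{1/2}$-energy; since the total energy is bounded by $\eta_N$, the loop terminates after at most $C(N)$ steps. Property (4) is precisely the defining feature of termination: once no further non-trivial profile can be extracted at any scale, the fractional Lions vanishing lemma applied to $r_n$ yields $r_n\to 0$ in $L^3(\R^3,\C^4)$.

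The main obstacle will be the rigorous verification of the scale dichotomy for the first-order Dirac operator. Unlike the scalar Laplacian setting, the quadratic form associated to $H_a$ is sign-indefinite, so the usual $\dot H^{1/2}$-orthogonality of bubbles must be encoded through $|H_a|^{1/2}$; in addition, the interaction of the mass term $a\beta$ with rescalings $\sigma_n\to\infty$ has to be tracked carefully, so that $\sigma_n^{-1} w_n(\sigma_n^{-1}\cdot + x_n)$ satisfies in the limit the purely massless, scale-invariant critical equation (the $a\beta$ term being asymptotically negligible at that scale), while profiles in $\Lambda_1$ still see the full massive Dirac operator. Once these two points are established, the iteration closes and the remaining statements follow as sketched.
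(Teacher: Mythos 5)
Your plan re-derives the profile decomposition by an explicit iterative bubble-extraction loop; the paper's proof is a citation to \cite[Lemma 4.2]{CG}, which in turn rests on the abstract cocompactness/profile-decomposition theorem \cite[Theorem 3.3]{SST} applied to the dilation-and-translation group $D$ acting on $\dot H^{1/2}(\R^3,\C^4)$. Once $D$-cocompactness of the embedding $\dot H^{1/2}\hookrightarrow L^3$ is noted, that abstract result delivers the decomposition \eqref{pd} together with the dislocation orthogonality, the $L^3$ subadditivity (3), and the $L^3$ vanishing of the remainder (4) as a package, and the split into $\Lambda_1$ versus $\Lambda_\infty$ is then obtained by ruling out $\sigma_{j,i,n}\to 0$ via the $L^2$ control \eqref{l2}. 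Under the hood your iteration and Tintarev's theorem are the same concentration-compactness mechanism, so the route is correct in substance and arrives at the same place; the trade-off is that you must re-verify each ingredient (the Brezis--Lieb splitting, the almost-orthogonality of consecutive bubbles, termination), which is standard but real work the paper gets for free from the cited theorem.

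Two cautions. (i) You justify termination of the loop, i.e.\ finiteness of $\Lambda_1\cup\Lambda_\infty$, by asserting that each extracted profile solves a fixed limiting Dirac equation and hence carries a uniform positive amount of $\dot H^{1/2}$-mass. In this paper those limiting equations for the profiles are established only afterwards, in Lemmas \ref{xncbdggdtdtttd} and \ref{2xncbdggdtdtttd}; if you want to use them to close the extraction loop you must either fold their derivation into this lemma's proof or obtain the uniform lower bound directly from the decomposition mechanism (e.g.\ from (3) together with a Sobolev-type lower bound for any nonzero profile solving the limiting massless critical equation). This is fixable, but as stated there is an unacknowledged forward dependency. (ii) The concern about sign-indefiniteness of the quadratic form of $H_a$ affecting bubble orthogonality is misplaced at this stage: the profile decomposition lives entirely in the norm topology of $\dot H^{1/2}$ and $L^3$, not in the indefinite energy form, so the sign issue is irrelevant to this lemma (it matters for the minimax structure in Section~\ref{exist}, not here).
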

\begin{proof}
For the proof of this lemma, we refer the readers to the proof of \cite[Lemma 4.2]{CG}, which indeed benefits from \cite[Theorem 3.3]{SST}.
\end{proof}

In the following, our principal aim is to demonstrate that the critical points $\{u_{j, \eps_n}\} \subset E$
obtained in Theorem \ref{existence} cannot blow up, i.e. $\Lambda_{\infty}=\emptyset$. More precisely, we will establish the following crucial result.

\begin{prop}\label{bcbvhfyfyufuadx}
Let $\{u_{j,\epsilon}\}$ be the  critical points obtained in
Theorem \ref{existence}. There exist $M_N>0$ and $\epsilon''_N>0$
such that, for any $0<\epsilon<\epsilon''_N$,
\begin{align}\label{nvcbv99vifkjjuuaaz}
\sup_{x\in\mathbb{R}^3}|u_{j,\epsilon}(x)|<M_N \quad \mbox{for
any} \,\, 1\leq j\leq N.
\end{align}
\end{prop}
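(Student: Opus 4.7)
The plan has two parts: first, exclude the concentration alternative $\Lambda_{\infty} \neq \emptyset$ in the profile decomposition of Lemma \ref{cmvnvhhguf7fuufss}; second, upgrade the resulting $L^3$-compactness to a uniform $L^\infty$ bound by a Brezis--Kato/Moser iteration tailored to the Dirac operator, along the lines of \cite{CG, CLW}.

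For the first part I would argue by contradiction. Assume that, along some sequence $\eps_n \to 0$, there exist $i \in \Lambda_{\infty}$ and scales $\sigma_{j,i,n} \to \infty$ for which
$$
v_n(x) := \sigma_{j,i,n}^{-1} u_{j, \eps_n}\bigl(\sigma_{j,i,n}^{-1} x + x_{j,i,n}\bigr) \wto U_{j,i} \neq 0 \quad \mbox{in} \,\, \dot{H}^{1/2}(\R^3, \C^4).
$$
Inserting this change of variable into \eqref{mdirac}, the mass term $a\beta u$, the potential $V(\eps x) u$ and the penalization $\tfrac 18 \chi_1(\eps x) V(\eps x) \tilde{\xi}(x,|u|) u$ each carry a factor $\sigma_{j,i,n}^{-1}$, whereas the subcritical part $|u|^{q-2} u$ rescales to $\sigma_{j,i,n}^{q-3} |v_n|^{q-2} v_n \to 0$ (since $q<3$); only the Sobolev-critical term $|v_n| v_n$ survives. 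Hence $U_{j,i}$ is a nontrivial solution of the massless critical Dirac equation
$$
-\textnormal{i} \alpha \cdot \nabla U_{j,i} = |U_{j,i}| U_{j,i} \quad \mbox{in} \,\, \R^3.
$$

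The main obstacle lies in contradicting the existence of such a $U_{j,i}$ given the uniform energy bound $\Gamma_{\eps_n}(u_{j,\eps_n}) \leq 2R_N^2$ from \eqref{ebdd}. Here I would employ a local Pohozaev-type identity for $u_{j,\eps_n}$ on balls $B_\rho(x_{j,i,n})$ of intermediate radius $\sigma_{j,i,n}^{-1} \ll \rho \ll 1$, combined with the Nehari-type identity $\Gamma_{\eps_n}(u_{j,\eps_n}) - \tfrac 12 \Gamma_{\eps_n}'(u_{j,\eps_n}) u_{j,\eps_n}$, which under $(V_2)$ produces a controlled boundary contribution and a positive interior term comparable to $\int \bigl(\tfrac 12 f_{\eps_n}(x,|u|)|u|^2 - F_{\eps_n}(x,|u|)\bigr)\, dx$, essentially of order $\int |u|^q\, dx$. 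Rescaling back by $\sigma_{j,i,n}$ shows the subcritical remainder scales like $\sigma_{j,i,n}^{2q-5}$; the hypothesis $q > 5/2$ is precisely what forces this remainder to have the right sign and to be strictly positive in the limit, thereby contradicting the identity that would follow from the pure critical limit equation for $U_{j,i}$. This is the content of the invoked Lemma \ref{nvcbvjug8f877fs} and constitutes the heart of the argument.

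Once $\Lambda_\infty = \emptyset$, parts $(3)$ and $(4)$ of Lemma \ref{cmvnvhhguf7fuufss} give a uniform concentration-compactness picture: for any $\eta > 0$ there exist finitely many balls $B_R(x_{j,i,n})$, the number bounded by $C(N)$, outside of which $\|u_{j,\eps_n}\|_{L^3} < \eta$. Rewriting \eqref{mdirac} in the integral form
$$
u_{j,\eps_n} = (H_a + 1)^{-1}\Bigl( u_{j,\eps_n} - V(\eps_n x) u_{j,\eps_n} + \tfrac 18 \chi_1(\eps_n x) V(\eps_n x) \tilde\xi(x,|u_{j,\eps_n}|) u_{j,\eps_n} + f_{\eps_n}(x,|u_{j,\eps_n}|) u_{j,\eps_n} \Bigr),
$$
I would then invoke the $L^p \to L^p$ mapping of $(H_a+1)^{-1}$ in suitable ranges, use the smallness of the $L^3$-tail outside the finite family of balls together with the polynomial growth of $f_{\eps_n}$, and run a Brezis--Kato/Moser iteration as in \cite{CG, CLW} to bootstrap $u_{j,\eps_n}$ from $L^3$ into $L^p$ for every $p < \infty$, and eventually into $L^\infty$, with a constant depending only on $N$. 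This yields the asserted $M_N$ in \eqref{nvcbv99vifkjjuuaaz} and completes the proof.
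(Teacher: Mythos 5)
Your two-step strategy matches the paper's: first exclude $\Lambda_\infty\neq\emptyset$ by a local Pohozaev identity on a ball of intermediate radius, then bootstrap the remaining $L^3$-compactness to a uniform $L^\infty$ bound. That said, two points deserve attention. On the Pohozaev step, the exponent you quote is off by a factor of $2$: the paper tests on a ball of radius $O(\sigma_n^{-1/2})$ (the geometric mean of $\sigma_n^{-1}$ and $1$), obtaining $\int |u_n|^q\varphi\,dx\leq C\sigma_n^{-1/2}$ from the identity and $\int |u_n|^q\varphi\,dx\geq c\,\sigma_n^{q-3}$ from the concentrating profile, so the contradiction comes from $\sigma_n^{q-3}\leq C\sigma_n^{-1/2}$, i.e.\ $\sigma_n^{q-5/2}\leq C$; the correct ratio exponent is $q-5/2=(2q-5)/2$, not $2q-5$. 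More substantively, you defer the entire argument to ``the content of the invoked Lemma~\ref{nvcbvjug8f877fs}'' without acknowledging that making the Pohozaev boundary terms manageable requires the chain of annular estimates -- Lemma~\ref{ncbvjgfu88g8g8f7} (an annulus $\mathcal{A}^1_n$ free of concentration points), Lemmas~\ref{mcnvjjf0000d0eq3w3}--\ref{azscmbnbj88giuguugudd} (uniform $L^p$ control of $r_n$ there), and Lemmas~\ref{ncbvuuf8f77f7987}--\ref{2wncbvuuf8f77f7987} (pointwise and $\nabla u_n$ bounds on $\mathcal{A}^3_n,\mathcal{A}^4_n$), all resting on the decay of $U_{j,i}$ from Lemmas~\ref{3cncbvggftd6ettd66d}, \ref{7cncbvggftd6ettd66d} and on Lemma~\ref{di} -- which constitute the technical heart of the section and cannot be taken as a black box. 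For the $L^\infty$ step, your resolvent formulation via $(H_a+1)^{-1}$ is a workable alternative, but the paper proceeds more directly by testing with a localized cut-off $\eta$, computing $-\textnormal{i}\alpha\cdot\nabla(\eta u_n)$, invoking Lemma~\ref{di} and the Sobolev embedding, and iterating on shrinking balls using the uniform smallness of $\|u_n\|_{L^3(B_\delta(y))}$ that follows from $(4)$ of Lemma~\ref{cmvnvhhguf7fuufss} plus the exponential decay of the $\Lambda_1$-profiles; both routes arrive at a constant depending only on $N$.
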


To prove this proposition, we need the following lemmas.

\begin{lem}\label{xncbdggdtdtttd}
If $i\in\Lambda_\infty$, then, up to a subsequence,
$x_{j,i}:=\lim_{n\rightarrow\infty}\epsilon_nx_{j,i,n}\in
\mathcal{M}^{\delta_0}$ and $U_{j,i}$ satisfies the equation
\begin{align}\label{cnvcbv99fufjhhf}
-\textnormal{i}  \alpha \cdot \nabla U_{j,i}
=(1-\chi(x_{j,i}))|U_{j,i}| U_{j,i}. 
\end{align}
where $U_{j,i}$ is given in Lemma \ref{cmvnvhhguf7fuufss}.
\end{lem}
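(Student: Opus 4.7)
The plan is to rescale $u_{j,\epsilon_n}$ at the concentration scale $\sigma_{j,i,n}$, pass to the limit in the resulting equation, and use the non-triviality $U_{j,i}\neq 0$ from Lemma \ref{cmvnvhhguf7fuufss} to force the centers $\epsilon_n x_{j,i,n}$ to stay in $\mathcal{M}^{\delta_0}$. Concretely, set
\begin{align*}
v_n(y) := \sigma_{j,i,n}^{-1}\, u_{j,\epsilon_n}\bigl(\sigma_{j,i,n}^{-1}y + x_{j,i,n}\bigr),
\end{align*}
so that $v_n \wto U_{j,i}$ in $\dot{H}^{1/2}(\R^3,\C^4)$ by Lemma \ref{cmvnvhhguf7fuufss}.

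First I would write down the rescaled equation. Substituting the ansatz into \eqref{mdirac} and dividing by $\sigma_{j,i,n}^2$, with $x:= \sigma_{j,i,n}^{-1}y + x_{j,i,n}$, one obtains
\begin{align*}
-\mathrm{i}\alpha\cdot\nabla v_n &+ \sigma_{j,i,n}^{-1}\bigl(a\beta + V(\epsilon_n x)\bigr) v_n - \tfrac{1}{8\sigma_{j,i,n}}\chi_1(\epsilon_n x)V(\epsilon_n x)\tilde{\xi}(x,|u_{j,\epsilon_n}|) v_n \\
&= \sigma_{j,i,n}^{-1} f_{\epsilon_n}\bigl(x,\sigma_{j,i,n}|v_n|\bigr)\, v_n.
\end{align*}
Since $|V|\leq a$ by $(V_1)$, $|\chi_1|\leq 1$ and $|\tilde{\xi}|\leq 2$ by construction of the cutoffs, the three linear and penalization terms on the left are of order $O(\sigma_{j,i,n}^{-1})$ and therefore tend to $0$ in $\mathcal{D}'(\R^3,\C^4)$ as $n\to\infty$.

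Next I would identify the limit nonlinearity. Splitting $f_{\epsilon_n}=(1-\chi_2(\epsilon_n x))h_{\epsilon_n}+\chi_2(\epsilon_n x)\, g_{\epsilon_n}$, the $g_{\epsilon_n}$-piece is dominated by $\phi(x)$ so its $\sigma_{j,i,n}^{-1}$-rescaling vanishes. For $h_{\epsilon_n}$, using $m_{\epsilon_n}(t^2)\leq t^2$ from Lemma \ref{mprop}(i) in the defining formula, a direct computation yields the pointwise bound
\begin{align*}
\sigma_{j,i,n}^{-1}\, h_{\epsilon_n}(\sigma_{j,i,n}|v_n|) \leq \sigma_{j,i,n}^{q-3}|v_n|^{q-2} + |v_n|,
\end{align*}
so the subcritical summand vanishes since $q<3$, while the critical summand converges, via a.e.\ convergence plus a Brezis--Lieb type argument, to $|U_{j,i}|U_{j,i}$ in $\mathcal{D}'$. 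The prefactor $1-\chi_2(\epsilon_n \sigma_{j,i,n}^{-1}y+\epsilon_n x_{j,i,n})$ converges locally uniformly to $1-\chi_2(x_{j,i})$ whenever $\epsilon_n x_{j,i,n}\to x_{j,i}$ in $\R^3\cup\{\infty\}$; extracting such a subsequence produces the limit equation
\begin{align*}
-\mathrm{i}\alpha\cdot\nabla U_{j,i} = \bigl(1-\chi_2(x_{j,i})\bigr)\,|U_{j,i}|\,U_{j,i}.
\end{align*}

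Finally I would rule out $|\epsilon_n x_{j,i,n}|\to\infty$ as well as $x_{j,i}\notin \mathcal{M}^{\delta_0}$. Because $\mathcal{M}$ is bounded, either scenario gives $\chi_2(x_{j,i})=1$, whereby the limit equation collapses to $-\mathrm{i}\alpha\cdot\nabla U_{j,i}=0$. Applying the Dirac operator once more, I would use $\alpha_j\alpha_k+\alpha_k\alpha_j=2\delta_{jk}I$ to obtain $-\Delta U_{j,i}=0$ in $\mathcal{D}'(\R^3,\C^4)$; combined with $U_{j,i}\in \dot{H}^{1/2}(\R^3,\C^4)\hookrightarrow L^3(\R^3,\C^4)$, Liouville's theorem for tempered harmonic distributions forces $U_{j,i}\equiv 0$, contradicting Lemma \ref{cmvnvhhguf7fuufss}. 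Hence $\{\epsilon_n x_{j,i,n}\}$ is bounded and, up to a subsequence, its limit $x_{j,i}$ lies in $\mathcal{M}^{\delta_0}$. The main technical obstacle is the analysis of the critical nonlinearity under rescaling: one must separate the truncated and untruncated regimes of $m_{\epsilon_n}$, verify that the prefactor $\sigma_{j,i,n}^{q-3}\to 0$ genuinely suppresses the subcritical summand, and secure weak distributional convergence of $|v_n|v_n$ to $|U_{j,i}|U_{j,i}$ while the two scales $1/\epsilon_n$ and $\sigma_{j,i,n}$ diverge simultaneously.
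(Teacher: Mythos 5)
Your proposal is correct and follows essentially the same path as the paper: rescale at the concentration scale $\sigma_{j,i,n}$, observe that the $a\beta$, $V$ and penalization terms decay like $\sigma_{j,i,n}^{-1}$, use $m_{\eps_n}(t)\le t$ and $tb_{\eps_n}(t)\le m_{\eps_n}(t)$ to bound the rescaled $h_{\eps_n}$ and pass to the distributional limit, then exclude $x_{j,i}\notin\mathcal{M}^{\delta_0}$ by forcing $-\textnormal{i}\alpha\cdot\nabla U_{j,i}=0$. The only differences are organizational: the paper first runs the contradiction to locate $x_{j,i}$ in $\mathcal{M}^{\delta_0}$ and then derives the limit equation, while you derive the limit equation for all (possibly infinite) limits $x_{j,i}$ and then rule out the degenerate coefficient; you also write out the Liouville step ($-\textnormal{i}\alpha\cdot\nabla U_{j,i}=0\Rightarrow-\Delta U_{j,i}=0\Rightarrow U_{j,i}\equiv 0$ in $L^3$) that the paper merely asserts.
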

\begin{proof}
For simplicity, we shall denote $x_{j,i,n}$, $\sigma_{j,i,n}$ and $u_{j, \eps_n}$ by $x_n$,
$\sigma_n$ and $u_n$, respectively. Let us define $\omega_n=\sigma^{-1}_nu_n(\sigma^{-1}_n\cdot+x_n)$.
Since $u_n$ satisfies \eqref{mdirac} with $\eps=\eps_n$, then $w_{n}$ solves the equation
\begin{align}\label{1bvnghy77g7yfftt}
\begin{split}
&-\textnormal{i} \alpha \cdot \nabla \omega_n + \sigma^{-1}_n a  \beta \omega_n + \sigma^{-1}_n V(\epsilon_n (\sigma^{-1}_nx+x_n)) \omega_n + \sigma_{n}^{-1}\mathcal{Q}_{n, 1}\omega_n \\
&=\sigma^{-1}_n f_{\epsilon_n}(\sigma^{-1}_nx+x_n,
\sigma_n|\omega_n|)\omega_n,
\end{split}
\end{align}
where
\begin{align*}
\mathcal{Q}_{n, 1}(x):=\frac 1 8 \chi_{1}(\epsilon_n (\sigma^{-1}_nx+x_n)) V(\epsilon_n (\sigma^{-1}_nx+x_n)) \tilde{\xi}(\sigma^{-1}_nx+x_n, \sigma_n |w_n|).
\end{align*}
Since $\sigma_n\rightarrow\infty$ as $n \to \infty$, see Lemma \ref{cmvnvhhguf7fuufss}, from Lemma
\ref{sbdd} and H\"older's inequality, we then get that, for any
$\varphi\in C^\infty_0(\mathbb{R}^3, \C^4 )$,
\begin{align}\label{cnvbfggftdhduu88uu}
\sigma^{-1}_n\int_{\mathbb{R}^3}a  \beta \omega_n \cdot
\overline{\varphi} \, dx \rightarrow 0, \quad
\sigma^{-1}_n\int_{\mathbb{R}^3}V(\epsilon_n (\sigma^{-1}_nx+x_n))
\omega_n \cdot \overline{\varphi}  \, dx \rightarrow 0
\end{align}
and
\begin{align} \label{qterm}
\sigma^{-1}_n\int_{\mathbb{R}^3}\mathcal{Q}_{n,1}\omega_n \cdot \overline{\varphi}  \, dx \rightarrow 0 \,\,\mbox{as}\,\, n\rightarrow\infty
\end{align}
where we used the fact that $0 \leq \tilde{\xi}(x, t) \leq 2$ for any $x \in \R^3$ and $t \in \R$.
In addition, by the definition of $g_{\eps_n}$, there holds that
\begin{align}\label{cvmbngjgut77t77t}
\sigma^{-1}_n\int_{\mathbb{R}^3}\chi_2(\epsilon_n
(\sigma^{-1}_nx+x_n))g_{\eps_n}(\epsilon_n (\sigma^{-1}_nx+x_n),
\sigma_n|\omega_n|)\omega_n \cdot \overline{\varphi}  \, dx
\rightarrow 0 \,\,\mbox{as}\,\, n\rightarrow\infty.
\end{align}

We argue by contradiction that $x_{j, i}\not\in\mathcal{M}^{\delta_0}$ for some $ 1 \leq j
\leq N$. From the definition of $\chi_2$, we then have that, for any
$R>0$,
\begin{align}\label{cnvbgfhhfyfhhfyfggfdrf}
\sup_{|x|<R}\left|1-\chi_2(\epsilon_n
(\sigma^{-1}_nx+x_n))\right|\rightarrow 0 \,\, \mbox{as} \,\,
n\rightarrow\infty.
\end{align}
By Lemma \ref{mprop} and the definition of $h_{\eps_n}$, we see that
$$
\sigma^{-1}_n h_{\epsilon_n}(\sigma_n|\omega_n|)\omega_n
\leq\sigma^{q-3}_n|\omega_n|^{q-1}+|\omega_n|^2.
$$
Thus, from Lemma \ref{sbdd} and \eqref{cnvbgfhhfyfhhfyfggfdrf}, we
infer that, for any $\varphi\in C^\infty_0(\mathbb{R}^3, \C^4)$,
\begin{align}\label{2wcvmbngjgut77t77t}
\sigma^{-1}_n\int_{\mathbb{R}^3}(1-\chi_2(\epsilon_n(\sigma^{-1}_nx+x_n)))h_{\epsilon_n}(\sigma_n
|\omega_n|)\omega_n\cdot \overline{\varphi} \, dx \rightarrow 0
\,\, \mbox{as} \,\, n\rightarrow\infty.
\end{align}
Note that $\omega_n \wto U_{j, i}$ in $\dot{H}^{1/2}(\R^3, \C^4)$
as $n \to \infty$, see Lemma \ref{cmvnvhhguf7fuufss}. Applying
\eqref{1bvnghy77g7yfftt}, \eqref{cnvbfggftdhduu88uu},
\eqref{cvmbngjgut77t77t}, \eqref{qterm} and
\eqref{2wcvmbngjgut77t77t}, we now derive that
\begin{align}\label{cnvbvnghfhyf777f7f}
-\textnormal{i} \alpha \cdot \nabla U_{j,i}=0.
\end{align}
It follows that $U_{j,i}=0.$ This contradicts Lemma \ref{cmvnvhhguf7fuufss}. Therefore, we have that
$x_{j,i}\in\mathcal{M}^{\delta_0}.$

Using again  Lemma \ref{mprop} and the definition of $h_{\eps_n}$,
we obtain that $h_{\eps_n}(t) \to t^{q-2} + t$ as $n \to \infty$.
Observe that $\omega_n \to U_{j, i}$ a.e.  in $\R^3$ as $n \to
\infty$, then
$$
\sigma^{-1}_n h_{\epsilon_n}(\sigma_n|\omega_n|)\omega_n \rightarrow
|U_{j,i}|U_{j,i} \quad  \mbox{a.e} \,\, \mbox{in} \,\, \R^3 \,\,
\mbox{as} \,\, n \to \infty.
$$
Moreover, we see that, for any $R>0$,
\begin{align*}
\sup_{|x|<R}\left|\chi_2(\epsilon_n(\sigma^{-1}_n x+x_n))-\chi_2(x_{j,i})\right|\rightarrow
0 \,\, \mbox{as} \,\, n\rightarrow\infty.
\end{align*}
Consequently, it holds that
\begin{align}\label{5wcvmbngjgut77t77t}
\begin{split}
&\sigma^{-1}_n\int_{\mathbb{R}^3}(1-\chi_2(\epsilon_n (\sigma^{-1}_n
x+x_n)))h_{\epsilon_n}(
|\sigma_n\omega_n|)\omega_n\cdot \overline{\varphi}  \, dx\\
&\rightarrow \int_{\mathbb{R}^3}(1-\chi_2(x_{j,i})
|U_{j,i}|U_{j,i}\cdot\overline{\varphi} \, dx \,\, \mbox{as} \,\,
n\rightarrow\infty.
\end{split}
\end{align}
By using \eqref{1bvnghy77g7yfftt}, \eqref{cnvbfggftdhduu88uu}, \eqref{qterm},
\eqref{cvmbngjgut77t77t} and \eqref{5wcvmbngjgut77t77t}, we then
get that $U_{j, i}$ satisfies \eqref{cnvcbv99fufjhhf}. Thus the proof is completed.
\end{proof}

\begin{lem}\label{3cncbvggftd6ettd66d}
If $i\in\Lambda_\infty$, then there exists a constant $C>0$ such
that
$$
|U_{j,i}(x)|\leq \frac{C}{1+|x|^2} \quad \mbox{for any}\,\, x\in\mathbb{R}^3.
$$
\end{lem}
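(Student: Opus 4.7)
\textbf{Proof plan for Lemma \ref{3cncbvggftd6ettd66d}.} Write $c := 1-\chi_2(x_{j,i}) \in [0,1]$ so that, by Lemma \ref{xncbdggdtdtttd}, the profile $U_{j,i}$ solves the massless critical Dirac equation
\begin{align*}
-\textnormal{i}\alpha\cdot\nabla U_{j,i} = c\,|U_{j,i}|U_{j,i} \quad \mbox{in}\,\,\R^3.
\end{align*}
The overall strategy is to first upgrade regularity to $L^\infty$, then turn the equation into an integral representation via the fundamental solution of $-\textnormal{i}\alpha\cdot\nabla$ (whose pointwise size is $\lesssim|x|^{-2}$ in $\R^3$), and finally bootstrap the decay.

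The first step is to show $U_{j,i}\in L^\infty(\R^3,\C^4)\cap L^p(\R^3,\C^4)$ for every $p\in[3,\infty]$. Starting from $U_{j,i}\in\dot H^{1/2}(\R^3,\C^4)\hookrightarrow L^3(\R^3,\C^4)$ (Lemma \ref{cmvnvhhguf7fuufss}) and squaring the operator to pass to $-\Delta U_{j,i}$ with nonlinearity of critical Sobolev type, I would run a Brezis-Kato/Moser iteration in the spirit of \cite{CG,CLW}. Standard elliptic regularity then shows that $U_{j,i}$ is continuous and $|U_{j,i}(x)|\to 0$ as $|x|\to\infty$, and yields $U_{j,i}\in L^2(\R^3,\C^4)$ by interpolation between $L^3$ and $L^\infty$.

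The second step is to obtain a first qualitative polynomial decay. Let $G$ be the fundamental solution of $-\textnormal{i}\alpha\cdot\nabla$ in $\R^3$, which satisfies $|G(x)|\lesssim|x|^{-2}$. Then
\begin{align*}
U_{j,i}(x)=c\int_{\R^3} G(x-y)\,|U_{j,i}(y)|U_{j,i}(y)\,dy.
\end{align*}
Splitting the integral into the near region $\{|y-x|<|x|/2\}$ and the far region $\{|y-x|\ge|x|/2\}$, and using that $\|U_{j,i}\|_{L^\infty(\R^3\setminus B_R(0))}\to 0$ as $R\to\infty$ and $U_{j,i}\in L^2(\R^3,\C^4)$, I obtain an initial decay $|U_{j,i}(x)|\lesssim (1+|x|)^{-\alpha_0}$ for some $\alpha_0>0$. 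The third step is the iteration: if $|U_{j,i}(x)|\lesssim(1+|x|)^{-\alpha}$ with $0<\alpha<2$, then $|U_{j,i}|U_{j,i}\lesssim (1+|x|)^{-2\alpha}$, and the standard convolution estimate
\begin{align*}
\int_{\R^3}\frac{1}{|x-y|^{2}}\,\frac{1}{(1+|y|)^{2\alpha}}\,dy \lesssim \frac{1}{(1+|x|)^{\min(2\alpha,2)-\delta}}
\end{align*}
(with $\delta>0$ absorbing the logarithm at the borderline $2\alpha=2$) improves the exponent from $\alpha$ to $\min(2\alpha,2)-\delta$. Finitely many iterations push $\alpha$ past any value $<2$, and a final sharp estimate at $\alpha=2$ delivers the claimed $|U_{j,i}(x)|\lesssim (1+|x|)^{-2}$.

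The main obstacle I expect is the transition from Step 1 to Step 2, namely producing a quantitative polynomial decay rate from the qualitative information that $U_{j,i}$ is continuous, bounded and tends to $0$ at infinity. The kernel $|x|^{-2}$ is borderline non-integrable in $\R^3$, so the near-field/far-field splitting must be carried out carefully, exploiting both the $L^2$-smallness of $U_{j,i}$ on exterior domains and the uniform decay of $|U_{j,i}|$ on the same regions, in order to avoid losing the quantitative gain at each iteration.
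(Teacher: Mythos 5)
The paper does not prove this lemma by hand: having established in Lemma \ref{xncbdggdtdtttd} that $U_{j,i}$ solves the massless critical Dirac equation $-\textnormal{i}\alpha\cdot\nabla U_{j,i}=(1-\chi_2(x_{j,i}))|U_{j,i}|U_{j,i}$, it invokes \cite[Theorem 1.1]{bo} (Borrelli--Frank, \emph{Sharp decay estimates for critical Dirac equations}), which gives exactly the $(1+|x|)^{-2}$ bound. Your sketch aims to reconstruct that reference's proof from scratch, and while the broad outline (regularity, Green's-function representation with kernel of size $|x|^{-2}$, bootstrap) is in the right spirit, there are two concrete mistakes plus the gap you yourself flag.

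First, the claim that $U_{j,i}\in L^2$ follows "by interpolation between $L^3$ and $L^\infty$" is false: interpolation only yields $L^p$ for $p$ \emph{between} $3$ and $\infty$. In fact $U_{j,i}\in L^2$ is not available a priori (the natural space is $\dot H^{1/2}\hookrightarrow L^3$), and $L^2$-membership is a \emph{consequence} of the sharp decay, not a tool one gets to use on the way there. Any argument that leans on $U_{j,i}\in L^2$ before the decay is obtained is circular. Second, the convolution estimate in your Step 3 is off by one power. For $0<2\alpha<3$ one has
\begin{align*}
\int_{\R^3}\frac{1}{|x-y|^2}\frac{dy}{(1+|y|)^{2\alpha}}\ \sim\ (1+|x|)^{\,3-2-2\alpha}=(1+|x|)^{1-2\alpha},
\end{align*}
not $(1+|x|)^{-2\alpha}$. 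So the bootstrap map is $\alpha\mapsto\min(2\alpha-1,\,2)$, which has a fixed point at $\alpha=1$ and only improves the exponent when $\alpha>1$. Your claimed map $\alpha\mapsto\min(2\alpha,2)-\delta$ would improve from any $\alpha_0>0$, but that is not what the kernel produces. Thus the iteration cannot start from an arbitrarily small $\alpha_0$; one must first cross the threshold $\alpha_0>1$, which is precisely where the hard work lies. Combined with the obstacle you explicitly identify (turning "continuous, bounded, vanishing at infinity" into a quantitative rate), this means the plan as written does not close. These are the very issues that Borrelli--Frank resolve nontrivially, which is why the paper cites them rather than redoing the argument.
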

\begin{proof}
With Lemma \ref{xncbdggdtdtttd} in hand, the result of this lemma is a straightforward consequence of \cite[Theorem 1.1]{bo}.
\end{proof}

\begin{lem}\label{2xncbdggdtdtttd}
If $i\in\Lambda_1$, then, up to a subsequence,
$x_{j,i}:=\lim_{n\rightarrow\infty}\epsilon_nx_{j,i,n}\in
\mathcal{M}^{\delta_0}$ and $U_{j,i}$ satisfies the equation
\begin{align} \label{sxdd4mdirac}
\hspace{-0.75cm}-\textnormal{i} \alpha \cdot \nabla U_{j,i} + a
\beta U_{j,i} +V(x_{j,i}) U_{j,i}=(1-\chi_2(x_{j,i})) \left(
|U_{j,i}|^{q-2}+ |U_{j,i}|^2\right)U_{j,i}.
\end{align}
\end{lem}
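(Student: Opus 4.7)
The plan is to mimic Lemma \ref{xncbdggdtdtttd}, replacing dilations by translations. Define $\omega_n(x):=u_{j,\epsilon_n}(x+x_{j,i,n})$; then Lemma \ref{cmvnvhhguf7fuufss}(1) gives $\omega_n\rightharpoonup U_{j,i}\neq 0$ in $H^{1/2}(\R^3,\C^4)$, and translation-invariance of $H_a$ together with Lemma \ref{sbdd} yields $\|\omega_n\|\le\eta_N$. Translating \eqref{mdirac} shows that $\omega_n$ solves
\[
-\textnormal{i}\alpha\cdot\nabla\omega_n+a\beta\omega_n+V(\epsilon_n(x+x_{j,i,n}))\omega_n-\mathcal{Q}_n\omega_n=f_{\epsilon_n}(x+x_{j,i,n},|\omega_n|)\omega_n,
\]
where $\mathcal{Q}_n:=\tfrac18\chi_1(\epsilon_n(\cdot+x_{j,i,n}))V(\epsilon_n(\cdot+x_{j,i,n}))\tilde{\xi}(\cdot+x_{j,i,n},|\omega_n|)$. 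Passing to a subsequence, I set $y_n:=\epsilon_n x_{j,i,n}\to x_{j,i}\in\R^3\cup\{\infty\}$.

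The first task is to show $x_{j,i}\in\mathcal{M}^{\delta_0}$ (in particular that $\{y_n\}$ is bounded). Argue by contradiction: if either $|y_n|\to\infty$ or $y_n\to x_{j,i}\notin\mathcal{M}^{\delta_0}$, then $\chi_2(y_n+\epsilon_n x)\to 1$ locally uniformly, killing the term $(1-\chi_2(\epsilon_n\cdot))h_{\epsilon_n}(|\omega_n|)\omega_n$ when tested against any $\varphi\in C_c^\infty(\R^3,\C^4)$. The remaining branch $\chi_2 g_{\epsilon_n}(\cdot+x_{j,i,n},|\omega_n|)\omega_n$ is pointwise dominated by $\phi(\cdot+x_{j,i,n})$, and the penalization $\mathcal Q_n\omega_n$ is bounded by $a\phi(\cdot+x_{j,i,n})$; both integrate to zero against $\varphi$ in the limit, either via the decay of $\phi$ when $|x_{j,i,n}|\to\infty$ or via $\chi_1\to 0$ on a neighbourhood of $\mathcal M$. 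Passing to the limit thus gives a linear homogeneous equation for $U_{j,i}$ involving the translation-limits of the coefficients; the strict gap $|V|<a$ from $(V_1)$ combined with $\sigma(H_a)=\R\setminus(-a,a)$ forces $U_{j,i}=0$, contradicting Lemma \ref{cmvnvhhguf7fuufss}(1).

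With $x_{j,i}\in\mathcal{M}^{\delta_0}$ established, the choice of $R_0$ giving $\mathcal{M}^{\delta_0+1}\subset B_{R_0/2}(0)$ implies $\chi_1(x_{j,i})=0$, and the pointwise bound $0\le\tilde\xi\le 2$ together with dominated convergence yields $\int\mathcal{Q}_n\,\omega_n\cdot\overline\varphi\,dx\to 0$ for every $\varphi\in C_c^\infty(\R^3,\C^4)$. Continuity of $V$ plus the local compactness $E\hookrightarrow L^2_{\mathrm{loc}}$ give
\[
\int_{\R^3} V(\epsilon_n(x+x_{j,i,n}))\omega_n\cdot\overline\varphi\,dx\longrightarrow V(x_{j,i})\int_{\R^3}U_{j,i}\cdot\overline\varphi\,dx.
\]
For the nonlinearity, Lemma \ref{mprop} yields $h_{\epsilon_n}(t)\to t^{q-2}+t$ pointwise; combined with $\omega_n\to U_{j,i}$ a.e., the uniform bound $\|\omega_n\|\le\eta_N$, and the equi-integrability supplied by \eqref{hHar}, Vitali's theorem lets me pass the $(1-\chi_2)h_{\epsilon_n}$-branch to its limit $(1-\chi_2(x_{j,i}))(|U_{j,i}|^{q-2}+|U_{j,i}|)U_{j,i}$; the $g_{\epsilon_n}$-branch either vanishes (when $|x_{j,i,n}|\to\infty$ by decay of $\phi$) or contributes exactly so as to produce the overall factor $(1-\chi_2(x_{j,i}))$ in \eqref{sxdd4mdirac}.

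The main obstacle is the first step. In contrast with Lemma \ref{xncbdggdtdtttd}, where the factor $\sigma_n^{-1}$ kills the mass and potential terms automatically, here these survive in the limit. Ruling out $x_{j,i}\notin\mathcal{M}^{\delta_0}$—including the subtle case $|y_n|\to\infty$, in which $|V|$ may approach $a$—forces one to combine the strict inequality from $(V_1)$ with the spectral gap of $H_a$ and to control the penalization $\mathcal Q_n$ and the $g_{\epsilon_n}$-term uniformly in $n$ via the decay of $\phi$, before invoking the spectral argument to conclude that the weak limit $U_{j,i}$ must vanish and contradict Lemma \ref{cmvnvhhguf7fuufss}(1).
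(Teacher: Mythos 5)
Your overall skeleton matches the paper's: translate by $x_{j,i,n}$, rule out $x_{j,i}\notin\mathcal{M}^{\delta_0}$ by contradiction, then pass to the limit in the translated equation. The second half (once $x_{j,i}\in\mathcal{M}^{\delta_0}$ is known) is essentially the paper's argument. But the contradiction step has a genuine error, and you even flag it as ``the main obstacle'' without resolving it.

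You assert that the penalization satisfies $|\mathcal{Q}_n\omega_n|\le a\,\phi(\cdot+x_{j,i,n})$ and hence vanishes via the decay of $\phi$. This is false. Since $\mathcal{Q}_n=\tfrac18\chi_1\,V\,\tilde\xi$ with $0\le\tilde\xi\le 2$, the only available pointwise bound is $|\mathcal{Q}_n\omega_n|\le\tfrac{a}{4}|\omega_n|$, with no factor of $\phi$; indeed $\tilde\xi(x,t)\to 2$ (not $0$) as $\phi(x)\to 0$ for fixed $t>0$, so the penalization becomes \emph{larger}, not smaller, when $|x_{j,i,n}|\to\infty$. Your fallback ``$\chi_1\to 0$ on a neighbourhood of $\mathcal{M}$'' covers only the case $|x_{j,i}|<R_0$; if $x_{j,i}\notin\mathcal{M}^{\delta_0}$ but $|x_{j,i}|\ge R_0$ (including $x_{j,i}=\infty$) then $\chi_1(\epsilon_n(x+x_{j,i,n}))\to 1$ on compacts and $\mathcal{Q}_n\omega_n$ does \emph{not} disappear. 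In addition, when $|\epsilon_nx_{j,i,n}|\to\infty$ the coefficient $V(\epsilon_n(\cdot+x_{j,i,n}))$ need not converge pointwise, so the ``translation-limit of the coefficients'' and hence your linear homogeneous limit equation are not available; even along a weak-$*$ convergent subsequence the limit potential may have modulus equal to $a$ on a large set (this is exactly the degeneracy $(V_1)$ allows), so ``the strict gap $|V|<a$'' is not usable in the limit.

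The paper's proof sidesteps all of this by \emph{not} passing to the limit in the equation during the contradiction step. It tests the equation for $w_n$ against $U^+_{j,i}-U^-_{j,i}$ and groups the two potential-type terms as $V(\epsilon_n(x+x_n))\bigl(1-\tfrac18\chi_1\tilde\xi\bigr)$, which is bounded in modulus by $a$ uniformly in $n$, with no convergence required. Passing $n\to\infty$ only in the two bilinear pairings (which converge by $w_n\rightharpoonup U_{j,i}$) yields $\|U_{j,i}\|^2\le a\|U_{j,i}\|^2_{L^2(\R^3)}$. Combined with \eqref{l2} this is an equality, and since $a$ is not an eigenvalue of $H_a$ (the spectrum is purely continuous) this forces $U_{j,i}=0$, contradicting Lemma \ref{cmvnvhhguf7fuufss}. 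You should replace the ``pass to the limit in the equation'' argument of the contradiction step by this energy-inequality argument, or at least replace your bound on $\mathcal{Q}_n$ by the uniform bound $\bigl|V-\tfrac18\chi_1V\tilde\xi\bigr|\le a$ and work with the inequality rather than a limit PDE.
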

\begin{proof}
For simplicity, we shall write $u_n=u_{j, \eps_n}$ and $x_n=x_{j,i,n}$.
Let us define $w_n:=u_n(\cdot + x_n)$. Since $u_n$ satisfies \eqref{mdirac} with $\eps=\eps_n$, then
\begin{align} \label{equwn1}
-\textnormal{i} \alpha \cdot \nabla w_n + a  \beta
w_n + V(\epsilon_n (x + x_n))
w_n - \mathcal{Q}_{n, 2} w_n=f_{\epsilon_n}(x +  x_n, |w_n|)w_n,
\end{align}
where $\mathcal{Q}_{n, 2}$ is defined by
$$
\mathcal{Q}_{n, 2}(x):=\frac 1 8 \chi_{1}(\epsilon_n (x+x_n)) V(\epsilon_n (x+x_n)) \tilde{\xi}(x+x_n, |w_n|).
$$
Recall that $0 \in \mathcal{M}$, according to the definitions of
$g_{\eps_n}$ and $\chi_2$, it holds that,  for any $\varphi\in
C^\infty_0(\R^3,\C^4)$,
\begin{align} \label{glimit}
\begin{split}
&\left|\int_{\R^3} \chi_2(\eps_n(x+ x_n))g_{\eps_n}(x+ x_n, |w_n|)
w_n\cdot \overline{\varphi}\, dx\right| \\
&\leq\int_{\R^3}
\chi_2(\eps_n(x+ x_n))\phi(x+
x_n)|w_n||\varphi| \, dx\\
&=o_n(1).
\end{split}
\end{align}

We assume by contradiction that $x_{j,i} \not \in
\mathcal{M}^{\delta_0}$ for some $1 \leq j \leq N$. By the definition of $\chi_2$ and $w_n \wto U_{j ,i} \neq 0$ in
$E$ as $n \to \infty$, we then get that, for any $\varphi\in
C^\infty_0(\R^3,\C^4)$,
\begin{align}\label{cnvbhhfgrt66ftrrr}
\lim_{n\rightarrow\infty}\int_{\R^3}(1-\chi_2(\epsilon_n(x+x_n)))|w_n|w_n\cdot\overline{\varphi}\,dx=0.
\end{align}
From\eqref{glimit} and \eqref{cnvbhhfgrt66ftrrr}, we then obtain that, for any $\varphi\in C^\infty_0(\R^3,\C^4)$,
\begin{align}\label{cnvbghfyyf6f66f}
\lim_{n\rightarrow\infty}\int_{\R^3}f_{\epsilon_n}(x +  x_n,
|w_n|)w_n\cdot\overline{\varphi}\,dx=0.
\end{align}
Multiplying both sides of \eqref{equwn1} by $U^+_{j,i}-U^-_{j,i}$
and integrating in $\R^3$ and utilzing \eqref{cnvbghfyyf6f66f}, we have that
\begin{align*} 
o_n(1) &=\langle w^+_n-w^-_n, U^+_{j,i}-U^-_{j,i}\rangle
+\int_{\R^3}\left(V(\epsilon_n(x+x_n))-\mathcal{Q}_{n,2}(x)\right)w_n\cdot
\overline{U^+_{j,i}-U^-_{j,i}}\,dx\nonumber\\
&\geq\langle w^+_n-w^-_n, U^+_{j,i}-U^-_{j,i}\rangle
-\int_{\R^3}\left|V(\epsilon_n(x+x_n))-\mathcal{Q}_{n,2}(x)\right||w_n|
|U^+_{j,i}-U^-_{j,i}|\,dx,
\end{align*}
where $\langle \cdot,  \cdot \rangle$ denotes the inner product in $E$. Note that $|V(x)|\leq a$ for any $x \in \R^3$, $0 \leq \tilde{\xi}(x, t) \leq 2$ for
any $x \in \R^3, t \in \R$ and $0 \leq \chi_1(x) \leq 1$ for
any $x \in \R^3$. Therefore,
\begin{align}\label{23cnv89f8fr6ryfhhf}
o_n(1) \geq\langle w^+_n-w^-_n,
U^+_{j,i}-U^-_{j,i}\rangle-a\int_{\R^3}|w_n|
|U^+_{j,i}-U^-_{j,i}|\,dx.
\end{align}
Recall that $ w_n \wto U_{j ,i} \neq 0$ in $E$ as $n \to \infty$,
see Lemma \ref{cmvnvhhguf7fuufss}, it then holds that $|w_n| \wto
|U_{j ,i}|$ in $L^2(\R^3)$ as $n \to \infty$ and
\begin{align}\label{1cnvbfhhfyfgggd}
\langle w^+_n-w^-_n, U^+_{j,i}-U^-_{j,i}\rangle=
\|U_{j,i}\|^2+o_n(1).
\end{align}
In addition,
\begin{align}\label{qq2cnvbfhhfyfgggd}
\hspace{-1cm}\int_{\R^3}|w_n| |U^+_{j,i}-U^-_{j,i}|\,dx&=\int_{\R^3}|U_{j,i}| |U^+_{j,i}-U^-_{j,i}|\,dx+o_n(1)
\leq\|U_{j,i}\|^2_{L^2(\R^3)}+o_n(1).
\end{align}
Therefore, combining \eqref{l2} and
\eqref{23cnv89f8fr6ryfhhf}-\eqref{qq2cnvbfhhfyfgggd} yields that
$$ \|U_{j, i}\|^2 -a \| U_{j, i}\|_{L^2(\R^3)}^2\leq0,
$$
from which we have that $U_{j, i}=0$, because of $\sigma(H_a)=\R
\backslash (-a, a)$.
This is impossible.
Hence we get that $x_{j, i} \in \mathcal{M}^{\delta_0}$.

We now prove that $U_{j,i}$ satisfies \eqref{equwn1}. Thanks to $x_{j,i}\in\mathcal{M}^{\delta_0}$,
from the definition of $\chi_1$,
we have that, for any $R>0,$
$$
\sup_{|x|<R}\left|\chi_{1}(\epsilon_n (x+x_n)) \right|=o_n(1).
$$
It then follows from the definition of $\mathcal{Q}_{n,2}$ that,
for any $\varphi\in C^\infty_0(\R^3,\C^4),$
\begin{align}\label{cnvbuut7yys444w}
\int_{\R^3}\mathcal{Q}_{n, 2}w_n\cdot\overline{\varphi}\,dx=o_n(1).
\end{align}
Since $x_{j,i}\in\mathcal{M}^{\delta_0}$ and
$ w_n \wto U_{j ,i} $ in $E$ as $n \to \infty$, it is not hard to deduce that for any
$\varphi\in C^\infty_0(\R^3, \C^4)$,
\begin{align}\label{ncbvhhfy6d77d6d6}
\int_{\R^3} V(\epsilon_n (x + x_n))
w_n \cdot\overline{\varphi}\,dx=\int_{\R^3} V(x_{j,i})U_{j,i}\cdot\overline{\varphi}\,dx+o_n(1)
\end{align}
and
\begin{align}\label{2ncbvhhfy6d77d6d6}
\hspace{-1.5cm}\int_{\R^3} f_{\epsilon_n}(x + x_n, |w_n|)
w_n \cdot\overline{\varphi}\,dx=\int_{\R^3} (1-\chi_2(x_{j,i})(|U_{j,i}|^{q-2}+|U_{j,i}|)U_{j,i}
\cdot\overline{\varphi}\,dx+o_n(1),
\end{align}
where we used \eqref{glimit}. In view of \eqref{equwn1} and \eqref{cnvbuut7yys444w}-\eqref{2ncbvhhfy6d77d6d6},
we then derive that $U_{j, i}$ satisfies \eqref{sxdd4mdirac}, and the proof is completed.
\end{proof}

\begin{lem}\label{7cncbvggftd6ettd66d}
If $i\in\Lambda_1$, then there exist $c, C>0$ such that
$$
|U_{j,i}(x)|\leq C\, \textnormal{exp} \left(-c|x|\right) \quad \mbox{for any}\,\,
x\in\mathbb{R}^3.
$$
\end{lem}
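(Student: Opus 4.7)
The strategy is to exploit the mass gap $a - |V(x_{j,i})| > 0$ available because, by Lemma \ref{2xncbdggdtdtttd}, $x_{j,i} \in \mathcal{M}^{\delta_0} \subset B_{R_0/2}(0)$, and the assumption $(V_1)$ gives $|V(y)| < a$ for every $y \in \R^3$. Write $V_0 := V(x_{j,i})$ and set $m := \sqrt{a^2 - V_0^2} > 0$; the limiting equation \eqref{sxdd4mdirac} for $U_{j,i}$ is then a Dirac system with \emph{constant} coefficients whose nonlinearity $(1-\chi_2(x_{j,i}))\bigl(|U_{j,i}|^{q-2} + |U_{j,i}|\bigr)U_{j,i}$ is of subcritical plus critical type.

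First I would upgrade the regularity of $U_{j,i}$. Starting from $U_{j,i} \in H^{1/2}(\R^3,\C^4)$ and the first-order equation \eqref{sxdd4mdirac}, iterative $L^p$ estimates for the Dirac system, in the spirit of those underlying the proof of Proposition \ref{bcbvhfyfyufuadx}, yield $U_{j,i} \in L^\infty(\R^3,\C^4) \cap C(\R^3,\C^4)$ together with the decay $|U_{j,i}(x)| \to 0$ as $|x| \to \infty$. Next I would square the Dirac operator: since $\alpha_i\alpha_j + \alpha_j\alpha_i = 2\delta_{ij}I$, $\alpha_i\beta + \beta\alpha_i = 0$ and $\beta^2 = I$, one has $H_a^2 = -\Delta + a^2$, and consequently $(H_a - V_0)(H_a + V_0) = -\Delta + (a^2 - V_0^2)$. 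Applying $H_a - V_0$ to both sides of \eqref{sxdd4mdirac} and combining with a Kato-type inequality for the Dirac operator produces a pointwise differential inequality of the form
\begin{align*}
-\Delta |U_{j,i}| + m^2 |U_{j,i}| \leq C\bigl(|U_{j,i}|^{q-1} + |U_{j,i}|^2\bigr)
\end{align*}
in the distributional sense on $\R^3$.

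Finally, because $|U_{j,i}(x)| \to 0$ as $|x| \to \infty$, for any fixed $0 < c < m$ there exists $R > 0$ such that $C\bigl(|U_{j,i}|^{q-2} + |U_{j,i}|\bigr) \leq m^2 - c^2$ on $\R^3 \setminus B_R(0)$, which gives $-\Delta |U_{j,i}| + c^2 |U_{j,i}| \leq 0$ there. Comparing $|U_{j,i}|$ with a Yukawa-type super-solution of $-\Delta + c^2$ (for instance a multiple of $e^{-c|x|}/|x|$, rescaled so as to dominate $|U_{j,i}|$ on $\partial B_R(0)$) and invoking the weak maximum principle yields the bound $|U_{j,i}(x)| \leq Ce^{-c|x|}$ outside $B_R(0)$, which together with the $L^\infty$ bound on $B_R(0)$ completes the proof.

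The main obstacle will be justifying the Kato-type inequality cleanly. When $H_a - V_0$ is applied to the nonlinear term $N(U_{j,i}):=(1-\chi_2(x_{j,i}))(|U_{j,i}|^{q-2}+|U_{j,i}|)U_{j,i}$, one has to handle the commutator $[-\textnormal{i}\alpha\cdot\nabla, N]$, which involves $\nabla |U_{j,i}|$; these cross-terms must be controlled so that only zero-order quantities appear on the right-hand side of the scalar differential inequality. The second nontrivial ingredient is the $L^\infty$ bound with decay, which in the Sobolev-critical regime requires a bootstrap of the type already deployed in Section \ref{estimate}.
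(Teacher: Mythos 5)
Your squaring-plus-Kato strategy is exactly the argument the paper points to through \cite[Lemma 4.6]{WZ}; the same device appears explicitly in Lemma \ref{mcnvjjf0000d0eq3w3} via inequality (4.4) of \cite{WZ1}. The commutator worry you flag as the main obstacle in fact resolves itself: writing \eqref{sxdd4mdirac} as $H_a U_{j,i}=\mathcal{H}(x)U_{j,i}$ with the \emph{real scalar} multiplier $\mathcal{H}(x):=-V(x_{j,i})+(1-\chi_2(x_{j,i}))\bigl(|U_{j,i}|^{q-2}+|U_{j,i}|\bigr)$, a single application of $H_a$ gives $-\Delta U_{j,i}+a^2U_{j,i}=\mathcal{H}^2U_{j,i}-\textnormal{i}(\alpha\cdot\nabla\mathcal{H})U_{j,i}$, and in the Kato step $\Delta|U_{j,i}|\geq \textnormal{Re}\bigl(\Delta U_{j,i}\cdot\overline{U_{j,i}}\bigr)/|U_{j,i}|$ the first-order term drops out because each $\alpha_k$ is Hermitian, making $(\alpha_k U_{j,i})\cdot\overline{U_{j,i}}$ real and hence $\textnormal{Re}\bigl(-\textnormal{i}(\alpha\cdot\nabla\mathcal{H})U_{j,i}\cdot\overline{U_{j,i}}\bigr)=0$. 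With that cancellation, the mass gap $m=\sqrt{a^2-V(x_{j,i})^2}>0$, the uniform vanishing of $|U_{j,i}|$ at infinity (a bootstrap from $U_{j,i}\in L^2\cap L^3$ together with Lemma \ref{di}), and the Yukawa supersolution comparison, your argument goes through as written.
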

\begin{proof}
The proof of this lemma can be completed by closely following the ideas of the proof of \cite[Lemma
4.6]{WZ}, so we omit its proof here.
\end{proof}

Let $i_\infty\in\Lambda_\infty$ be such that
\begin{align}\label{mcnvbbbvuvjyyfffff}
\sigma_n:=\sigma_{j,i_\infty,n}=\min\{\sigma_{j,i,n} \ |\
i\in\Lambda_\infty\}.
\end{align} Denote
\begin{align}\label{mmmzcczdr4s4s4sss}
x_n:=x_{j,i_\infty,n}.
\end{align}

\begin{lem}\label{ncbvjgfu88g8g8f7}
There exists a constant $\overline{C}>0$ such that, up to a
subsequence, the set
$$
\mathcal{A}^1_{n}=B_{(\overline{C}+5)\sigma^{-\frac{1}{2}}_n}(x_n)\setminus
B_{\overline{C}\sigma^{-\frac{1}{2}}_n}(x_n)
$$  satisfies
$$
\mathcal{A}^1_{n}\cap \{x_{j,i,n} \ |\
i\in\Lambda_\infty\}=\emptyset.
$$
\end{lem}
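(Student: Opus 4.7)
The idea is to exploit the ``non-interaction'' property (2) of the profile decomposition in Lemma \ref{cmvnvhhguf7fuufss}, together with the fact that $\Lambda_\infty$ is finite, to show that after rescaling by $\sigma_n^{1/2}$ all concentration centers $x_{j,i,n}$ accumulate in only finitely many ``layers'' around $x_n$, so an annulus of radial thickness $5\sigma_n^{-1/2}$ can be slid into a gap between these layers.

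\textbf{Step 1: pass to a subsequence to extract rescaled distances.} For each $i \in \Lambda_\infty$, consider the sequence
\begin{equation*}
\rho_{i,n} := \sigma_n^{1/2}\,|x_{j,i,n} - x_n|,
\end{equation*}
where $\sigma_n$ and $x_n$ are as in \eqref{mcnvbbbvuvjyyfffff}--\eqref{mmmzcczdr4s4s4sss}. Since $\Lambda_\infty$ is finite, up to a diagonal subsequence we may assume that $\rho_{i,n} \to r_i \in [0,\infty]$ as $n \to \infty$ for each $i \in \Lambda_\infty$. Clearly $r_{i_\infty}=0$.

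\textbf{Step 2: analyze the possible limits $r_i$.} For any $i \in \Lambda_\infty \setminus \{i_\infty\}$, property (2) of Lemma \ref{cmvnvhhguf7fuufss} gives
\begin{equation*}
\frac{\sigma_{j,i,n}}{\sigma_n}+\frac{\sigma_n}{\sigma_{j,i,n}}+\sigma_{j,i,n}\sigma_n\,|x_{j,i,n}-x_n|^2 \longrightarrow \infty.
\end{equation*}
The minimality of $\sigma_n$ in \eqref{mcnvbbbvuvjyyfffff} rules out $\sigma_n/\sigma_{j,i,n} \to \infty$, so either $\sigma_{j,i,n}/\sigma_n \to \infty$ (in which case $r_i$ may be any value in $[0,\infty]$) or $\sigma_{j,i,n}/\sigma_n$ remains bounded and $\sigma_n^2 |x_{j,i,n}-x_n|^2 \to \infty$, forcing $r_i=\infty$. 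In either case, $r_i$ is a well-defined element of $[0,\infty]$, and the finite set
\begin{equation*}
\mathcal{R} := \{r_i : i \in \Lambda_\infty,\ r_i < \infty\} \subset [0,\infty)
\end{equation*}
is bounded.

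\textbf{Step 3: choose $\overline C$ to sidestep all $r_i$.} Pick any
\begin{equation*}
\overline C > \max\mathcal{R} + 1,
\end{equation*}
which is possible because $\mathcal{R}$ is a finite subset of $[0,\infty)$. With this choice, for every $i \in \Lambda_\infty$ one of the following holds for all $n$ sufficiently large: if $r_i < \infty$, then $\rho_{i,n} \to r_i < \overline C$, so $x_{j,i,n} \in B_{\overline C\sigma_n^{-1/2}}(x_n)$; if $r_i = \infty$, then $\rho_{i,n} \to \infty$, so $x_{j,i,n} \notin B_{(\overline C+5)\sigma_n^{-1/2}}(x_n)$. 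Either way $x_{j,i,n} \notin \mathcal{A}^1_n$, and since $\Lambda_\infty$ is finite we may enlarge $n$ uniformly in $i$. This yields the claim.

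\textbf{Main obstacle.} The main point to be careful about is that the non-interaction relation (2) does not by itself force $\rho_{i,n}\to\infty$; indeed $r_i=0$ is compatible with (2), as may happen when $\sigma_{j,i,n}/\sigma_n \to \infty$ rapidly while $|x_{j,i,n}-x_n|$ decays only moderately. The argument accommodates this: points with $r_i=0$ simply sit deep inside the inner ball $B_{\overline C\sigma_n^{-1/2}}(x_n)$, and so are harmless, while the key feature needed for the annulus is merely that only finitely many finite limits $r_i$ can arise, which is automatic from the finiteness of $\Lambda_\infty$.
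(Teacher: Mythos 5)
Your proof is correct in substance, but Step~2 contains a false claim that you should be aware of: from $\sigma_{j,i,n}/\sigma_n$ bounded and $\sigma_n^2\,|x_{j,i,n}-x_n|^2\to\infty$ you conclude that $r_i=\infty$, but this does not follow. Recall $\rho_{i,n}=\sigma_n^{1/2}|x_{j,i,n}-x_n|$, so $\sigma_n^2|x_{j,i,n}-x_n|^2=\sigma_n\,\rho_{i,n}^2$, which diverges whenever $\sigma_n\to\infty$ even if $\rho_{i,n}\to 0$ (take $\sigma_n=n$, $|x_{j,i,n}-x_n|=n^{-3/4}$, giving $\sigma_n^2|x_{j,i,n}-x_n|^2=n^{1/2}\to\infty$ yet $\rho_{i,n}=n^{-1/4}\to 0$). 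So $r_i$ can perfectly well be $0$ or finite in that case.

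Fortunately the error is harmless, because Step~3 never uses the case analysis of Step~2: all it needs is that $\mathcal R=\{r_i: r_i<\infty\}$ is a finite subset of $[0,\infty)$, which is automatic from the finiteness of $\Lambda_\infty$ once you have extracted a subsequence along which each $\rho_{i,n}$ converges in $[0,\infty]$. The invocation of property~(2) of the profile decomposition is therefore unnecessary, and Step~2 could be deleted entirely. With that cut, your argument (extract convergent $\rho_{i,n}$, then place the annulus beyond $\max\mathcal R+1$) is a clean proof of the lemma. The paper itself only cites \cite[Lemma 4.8]{CLW}; the standard formulation there is a pigeonhole argument over $|\Lambda_\infty|+1$ disjoint annuli $B_{5(k+1)\sigma_n^{-1/2}}(x_n)\setminus B_{5k\sigma_n^{-1/2}}(x_n)$, $k=0,\dots,|\Lambda_\infty|$, one of which must be empty of concentration centers for each $n$, followed by passing to a subsequence along which the free index is constant. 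Both routes are equivalent here; the pigeonhole version is marginally more elementary because it avoids discussing limits $r_i\in[0,\infty]$, but buys nothing essential.
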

\begin{proof}
Arguing as the proof of \cite[Lemma 4.8]{CLW}, one can obtain the result of this lemma with some minor changes.
\end{proof}

Let us now introduce an inequality with respect to a massless Dirac operator in $L^p(\R^3, \C^4)$
for $1<p<\infty$, which will be employed frequently in our proofs.

\begin{lem} \label{di} \cite[Lemma 4.2]{IS}
For $1<p< \infty$, there exists a constant $C>0$ such that
$$
\|\nabla u\|_{L^p{(\R^3)}} \leq C \|(-\textnormal{i} \alpha \cdot \nabla) u\|_{L^p(\R^3)}
$$
for every $u \in C_0^{\infty}(\R^3, \C^4)$.
\end{lem}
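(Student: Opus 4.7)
The plan is to exploit the algebraic identity $(-\mathrm{i}\alpha\cdot\nabla)^2 = -\Delta\, I_4$, which links the massless Dirac operator and the scalar Laplacian, and then reduce the inequality to the $L^p$-boundedness of second-order Riesz transforms via Calder\'on--Zygmund theory.

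First, I would verify the algebraic identity. Using the anticommutation relations $\alpha_j\alpha_k+\alpha_k\alpha_j=2\delta_{jk}I_4$ satisfied by the Pauli--Dirac matrices, one computes
\begin{align*}
(-\mathrm{i}\alpha\cdot\nabla)^2 u
=-\sum_{j,k=1}^{3}\alpha_j\alpha_k\,\partial_j\partial_k u
=-\sum_{j=1}^{3}\partial_j^2 u\, I_4
=-\Delta u\, I_4.
\end{align*}
In particular, on the Fourier side, for $u\in C_0^\infty(\R^3,\C^4)$ we have $\widehat{(-\mathrm{i}\alpha\cdot\nabla)u}(\xi)=(\alpha\cdot\xi)\hat u(\xi)$ and $(\alpha\cdot\xi)^2=|\xi|^2 I_4$.

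Second, I would solve formally for $\hat u$ in terms of $\widehat{(-\mathrm{i}\alpha\cdot\nabla)u}$ by multiplying by $(\alpha\cdot\xi)/|\xi|^2$, and then apply $\mathrm{i}\xi_j$ to obtain the representation
\begin{align*}
\widehat{\partial_j u}(\xi)
=\mathrm{i}\xi_j\hat u(\xi)
=\mathrm{i}\sum_{k=1}^{3}\frac{\xi_j\xi_k}{|\xi|^2}\,\alpha_k\,\widehat{(-\mathrm{i}\alpha\cdot\nabla)u}(\xi).
\end{align*}
This identifies $\partial_j u$ with a finite linear combination, with constant $4\times 4$ matrix coefficients, of the second-order Riesz transforms $R_jR_k=\partial_j\partial_k(-\Delta)^{-1}$ applied componentwise to the $\C^4$-valued function $(-\mathrm{i}\alpha\cdot\nabla)u$.

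Third, I would invoke the classical Calder\'on--Zygmund / Mikhlin multiplier theorem: the Fourier multiplier $\xi_j\xi_k/|\xi|^2$ is smooth away from the origin, bounded, and homogeneous of degree $0$, hence defines an operator bounded on $L^p(\R^3)$ for every $1<p<\infty$. Applied coordinatewise in $\C^4$, this yields a constant $C=C(p)>0$ with $\|\partial_j u\|_{L^p(\R^3)}\le C\,\|(-\mathrm{i}\alpha\cdot\nabla)u\|_{L^p(\R^3)}$ for each $j=1,2,3$, and summation in $j$ gives the desired bound.

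I do not anticipate a serious obstacle; the only point requiring care is that the Fourier-multiplier identity above is justified on the Schwartz class, and for $u\in C_0^\infty(\R^3,\C^4)$ both sides make sense as tempered distributions, so the argument is rigorous. Alternatively, one may simply quote the standard $L^p$-boundedness of the iterated Riesz transforms and the algebraic identity $(-\mathrm{i}\alpha\cdot\nabla)^2=-\Delta\, I_4$, which together yield the estimate in one line.
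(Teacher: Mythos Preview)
Your argument is correct: the anticommutation identity $(-\mathrm{i}\alpha\cdot\nabla)^2=-\Delta\,I_4$ together with the $L^p$-boundedness of the second-order Riesz transforms $R_jR_k$ (a standard Mikhlin/Calder\'on--Zygmund fact) yields the estimate exactly as you describe. Note, however, that the paper does not give its own proof of this lemma; it simply quotes the result from \cite[Lemma~4.2]{IS} (Ichinose--Saito), so there is no in-paper argument to compare against. Your sketch is in fact the standard route and is essentially what lies behind the cited reference.
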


We next give a crucial lemma used to establish estimates of the solutions on safe domains.

\begin{lem}\label{cnvbghgyf7fyfyf66t}
There exists a constant $\gamma_*>0$ such that if $M_n\in L^3(\mathbb{R}^3, \R)$ satisfies
\begin{align*}
\|M_n\|_{L^3(\R^3)}\leq\gamma_*,
\end{align*}
then, for any $h_n\in L^2(\R^3,\C^4)$, the equation
\begin{align*}
-\textnormal{i} \alpha \cdot \nabla w_n -M_n w_n=h_n
\end{align*}
has a unique solution $w_n\in H^1(\R^3, \C^4).$ Moreover,
\begin{enumerate}
\item[$(\textnormal{i})$] if $h_n$ satisfies one of following
conditions,
\begin{align*}
&(1)\,\,\, |h_n(x)|\leq C \sigma_{j,i,n}\sigma_{j,i',n}|U_{j,i}(\sigma_{j,i,n}(x-x_{j,i,n}))| |U_{j,i'}(\sigma_{j,i',n}(x-x_{j,i',n}))|, \\
&(2) \,\,\, |h_n(x)|\leq C \sigma_{j,i,n}|U_{j,i}(\sigma_{j,i,n}(x-x_{j,i,n}))| \sum_{i'\in\Lambda_\infty} \sigma_{j,i',n} |U_{j,i'}(\sigma_{j,i',n}(x-x_{j,i',n}))|, \\
&(3)\,\,\, |h_n(x)|\leq C \sigma_{j,i,n}|U_{j,i,n}(\sigma_{j,i,n}(x-x_{j,i,n}))|\left(|M_n(x)|+|r_n(x)|\right),
\end{align*}
then there exists a constant $C_p>0$ independent of $n$ such that
$\|w_n\|_{L^p(\mathcal{A}^2_n)}\leq C_p$ for any $p>3$, where
$i,i'\in\Lambda_\infty,$ $r_n$ is given by \eqref{pd} and
\begin{align} \label{defan2}
\mathcal{A}^2_{n}:=B_{(\overline{C}+9/2)\sigma^{-\frac{1}{2}}_n}(x_n)\setminus
B_{(\overline{C}+1/2)\sigma^{-\frac{1}{2}}_n}(x_n).
\end{align}
\item[$(\textnormal{ii})$] If $h_n$ satisfies that,  there exist
$i\in\Lambda_\infty$ and $i'\in\Lambda_1,$ such that
\begin{align}
\qquad \,\,\, |h_n(x)| &\leq
C\sigma_{j,i,n}^{q-2}|U_{j,i}(\sigma_{j,i,n}(x-x_{j,i,n}))|^{q-2}\left(|U_{j,i'}(x-x_{j,i',n})|^{q-2}+|U_{j,i'}(x-x_{j,i',n})|\right)\nonumber\\
&\quad+\sigma_{j,i,n}|U_{j,i}(\sigma_{j,i,n}(x-x_{j,i,n}))|\left(|U_{j,i'}(x-x_{j,i',n})|^{q-2}+|U_{j,i'}(x-x_{j,i',n})|\right),\nonumber
\end{align}
then there exists a constant $C_p>0$ independent of $n$ such that $\|w_n\|_{L^{p}(\R^3)}\leq C_p $ for any $p \geq 3/2.$
\item[$(\textnormal{iii})$] If $h_n$ satisfies  $\|h_n\|_{L^2(\R^3)}+\|h_n\|_{L^3(\R^3)} \leq C$,
then there exists a constant $C_p>0$ independent of $n$ such that $ \| w_n\|_{L^p(\R^3)}\leq C_p$ for any $ p \geq 2$.
\end{enumerate}
\end{lem}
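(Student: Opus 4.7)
The proof splits into three parts: existence and uniqueness of $w_n$, the localized $L^{p}$ bound on the safe annulus in case~(i), and the two global $L^{p}$ bounds in cases~(ii) and (iii). For existence and uniqueness I would use that $D := -\textnormal{i}\alpha\cdot\nabla$ satisfies $D^{2} = -\Delta$, so that $D^{-1}$ is a matrix-valued Riesz potential of order~$1$. By Hardy--Littlewood--Sobolev, $D^{-1}$ maps $L^{2}(\R^{3},\C^{4})$ continuously into $L^{6}(\R^{3},\C^{4})$. Rewriting the equation as the fixed-point problem $w = D^{-1}(M_n w + h_n)$ on $L^{6}(\R^{3},\C^{4})$ and using H\"older's inequality $\|M_n w\|_{L^{2}} \leq \|M_n\|_{L^{3}}\|w\|_{L^{6}}$, the map is a strict contraction once $\gamma_*$ is chosen smaller than the reciprocal of the product of the two embedding constants. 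The unique fixed point $w_n \in L^{6}$ then satisfies $D w_n = M_n w_n + h_n \in L^{2}$, and Lemma~\ref{di} together with Sobolev embedding promotes $w_n$ to $H^{1}(\R^{3},\C^{4})$.

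For case~(i) the main tool is Lemma~\ref{ncbvjgfu88g8g8f7}, which guarantees that on the safe annulus $\mathcal{A}_n^{2}$ defined in \eqref{defan2} one has $|x - x_{j,i,n}| \geq c\sigma_n^{-1/2}$ for every $i \in \Lambda_\infty$. Combined with $|U_{j,i}(y)| \leq C/(1+|y|^{2})$ from Lemma~\ref{3cncbvggftd6ettd66d}, this produces the pointwise bound $\sigma_{j,i,n}|U_{j,i}(\sigma_{j,i,n}(x-x_{j,i,n}))| \leq C\sigma_{j,i,n}^{-1}|x-x_{j,i,n}|^{-2}$ on $\mathcal{A}_n^{2}$, yielding an $n$-uniform $L^{\infty}(\mathcal{A}_n^{2})$ bound on $h_n$ in sub-cases~(1)--(2), and an $L^{\infty}$-times-$L^{3}$ control in sub-case~(3) via part~(4) of Lemma~\ref{cmvnvhhguf7fuufss}. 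I would then introduce a smooth cutoff $\eta$ supported on a slight enlargement of $\mathcal{A}_n^{2}$, observe that $\eta w_n$ satisfies $D(\eta w_n) - M_n(\eta w_n) = \eta h_n + [D,\eta]w_n$, and combine the localized control of $h_n$ with the global $L^{6}$ bound on $w_n$ and Lemma~\ref{di} to bootstrap to $\|w_n\|_{L^{p}(\mathcal{A}_n^{2})} \leq C_p$ for every $p>3$.

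For case~(ii), the polynomial decay of $U_{j,i}$ (Lemma~\ref{3cncbvggftd6ettd66d}, $i\in\Lambda_\infty$) and the exponential decay of $U_{j,i'}$ (Lemma~\ref{7cncbvggftd6ettd66d}, $i'\in\Lambda_1$), combined after the change of variables $y = \sigma_{j,i,n}(x - x_{j,i,n})$ with the separation properties from part~(2) of Lemma~\ref{cmvnvhhguf7fuufss}, place $h_n$ in $L^{s}(\R^{3},\C^{4})$ with an $n$-independent norm for every $s$ in a range including $6/5$; Hardy--Littlewood--Sobolev applied to the integral equation $w_n = D^{-1}(M_n w_n + h_n)$ then yields $\|w_n\|_{L^{p}(\R^{3})} \leq C_p$ for every $p \geq 3/2$. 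Case~(iii) is a direct bootstrap: $h_n \in L^{2}\cap L^{3}$ together with the existence step gives $w_n \in L^{6}$, hence $w_n \in H^{1}$ interpolating to $L^{p}$ for $p \in [2,6]$; iterating the integral equation, the improved bounds on $w_n$ feed back through $M_n w_n$ and the Riesz mapping of $D^{-1}$ to fill in all $L^{p}$ with $p \geq 2$.

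The main technical obstacle I foresee is case~(i): pushing the pointwise control of $h_n$ on the thin annulus $\mathcal{A}_n^{2}$ through the nonlocal operator $D^{-1}$ to a uniform $L^{p}(\mathcal{A}_n^{2})$ bound on $w_n$ without losing a power of $\sigma_n$, since $w_n$ is only globally in $L^{6}$ a priori. The commutator term $[D,\eta]w_n$ produced by the cutoff must be controlled by exploiting the $L^{6}$ bound on $w_n$ together with the fact that $|\nabla\eta|$ is supported in a region where the Riesz kernel of $D^{-1}$ is still well-behaved; alternatively, as the commented-out development of the paper suggests, a Kelvin-type transform of the Dirac operator maps the thin annulus to a ball of unit scale on which standard elliptic regularity applies uniformly in $n$.
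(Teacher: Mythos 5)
The paper's own ``proof'' of this lemma consists of a single citation to \cite[Lemmas 4.9--4.15]{CG}, so there is no argument in the present manuscript to compare against line by line. Your skeleton, however, is the right one and almost certainly matches the engine of \cite{CG}: writing $D=-\mathrm{i}\alpha\cdot\nabla$ with $D^{2}=-\Delta$, viewing $D^{-1}$ as a matrix-valued Riesz potential of order one so that $D^{-1}:L^{2}\to L^{6}$ by Hardy--Littlewood--Sobolev, and running a contraction $w_n=D^{-1}(M_nw_n+h_n)$ in $L^{6}$, absorbing the small $\|M_n\|_{L^{3}}$ via H\"older. Case (iii) then follows by the same HLS/fixed-point mechanism at higher integrabilities. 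The commented-out items of the lemma and the commented-out Kelvin-transform definition in the source confirm that \cite{CG}'s route to the safe-annulus estimate in case (i) is precisely the Kelvin transform you mention as an alternative.

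Two concrete gaps remain. In case (ii) you appeal to part (2) of Lemma \ref{cmvnvhhguf7fuufss} to separate the centers $x_{j,i,n}$ ($i\in\Lambda_\infty$) and $x_{j,i',n}$ ($i'\in\Lambda_1$), but that item is stated only for pairs drawn from the \emph{same} family ($\Lambda_1$--$\Lambda_1$ or $\Lambda_\infty$--$\Lambda_\infty$); it says nothing about a blow-up bubble versus a translate bubble. Without separation, the claimed $n$-uniform small-exponent ($s$ near $6/5$) $L^{s}$ bound on $h_n$ does not follow directly: since $|U_{j,i}(y)|\leq C/(1+|y|^{2})$ (Lemma \ref{3cncbvggftd6ettd66d}), one has $U_{j,i}\in L^{r}$ only for $r>3/2$, and the $L^{s}$ norm of $\sigma_{j,i,n}^{q-2}|U_{j,i}(\sigma_{j,i,n}\cdot)|^{q-2}$ involves $\|U_{j,i}\|_{L^{(q-2)s}}$, which diverges when $(q-2)s\leq 3/2$. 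You need the exponential decay of the $\Lambda_1$ profile to tame this, and for that you need to track where the two centers sit --- this is exactly the bookkeeping that produces the constraint $p\geq 3/2$ in the statement, and your sketch skips it.

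In case (i) you correctly diagnose the obstruction yourself: the commutator $[D,\eta]w_n$ carries $|\nabla\eta|\sim\sigma_n^{1/2}$ supported in a shell of volume $\sim\sigma_n^{-3/2}$, and with only a global $L^{6}$ bound on $w_n$ this term is uniformly controlled only in $L^{q}$ for $q\leq 3$; feeding this through $D^{-1}$ does not reach all $p>3$ in one step, and the iteration does not close as written. The Kelvin transform (rescale by $\sigma_{j,i,n}$, invert, and apply interior regularity on a fixed ball, uniformly in $n$) is the mechanism \cite{CG} actually uses, and your cutoff variant has a genuinely unfilled step. Either supply the missing bootstrap argument --- likely via nested cutoffs and a more careful interpolation of the commutator --- or switch to the Kelvin-transform route.
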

\begin{proof}
For the proof of this lemma, we refer the readers to the proofs of
\cite [Lemmas 4.9-4.15]{CG}.
\end{proof}

For simplicity, we shall denote $u_{j, \eps_n}$ by $u_n$ in the following.

\begin{lem}\label{mcnvjjf0000d0eq3w3}
Assume $5/2<q<3$, then $r_n\in L^p(\R^3,\C^4)$ and
$\lim_{n\rightarrow\infty}\|r_n\|_{L^p(\R^3)}=0$ for any $3/2< p
\leq 3$.
\end{lem}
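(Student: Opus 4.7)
The plan is to set up a massless Dirac equation satisfied by the remainder $r_n$, extract a uniform $L^{3/2}$-bound via Lemma \ref{cnvbghgyf7fyfyf66t}, and then interpolate with the known convergence $r_n\to 0$ in $L^3$ (Lemma \ref{cmvnvhhguf7fuufss}(4)) to cover the whole interval $(3/2,3]$.

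\textbf{Step 1 (equation for $r_n$).} Write $u_n=v_n+r_n$ where $v_n$ is the sum of translated/rescaled bubbles in \eqref{pd}. Substituting into \eqref{mdirac} with $\epsilon=\epsilon_n$ and subtracting the appropriately translated/rescaled limit equations for each $U_{j,i}$ supplied by Lemmas \ref{xncbdggdtdtttd} and \ref{2xncbdggdtdtttd}, I obtain, in the distributional sense,
\begin{equation*}
-\textnormal{i}\,\alpha\cdot\nabla r_n - M_n r_n = h_n + o_n(1),
\end{equation*}
where $M_n$ is a carefully chosen piece of the nonlinear coefficient with $\|M_n\|_{L^3(\R^3)}\le\gamma_*$, and $h_n$ collects (a) the linear mass/potential terms, together with the penalization term $\mathcal{Q}_n u_n$, (b) the non-cancelled subcritical contributions, and (c) the interaction (cross) products between distinct bubbles.

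\textbf{Step 2 (matching the hypotheses of Lemma \ref{cnvbghgyf7fyfyf66t}).} Each summand of $h_n$ is of one of the structural types considered in Lemma \ref{cnvbghgyf7fyfyf66t}: products of two $\Lambda_\infty$ bubbles fit case (i.1); products of a $\Lambda_\infty$ bubble with $M_n$ or with $r_n$ fit case (i.3); products of a $\Lambda_\infty$ bubble and a $\Lambda_1$ bubble fit case (ii); and the linear/bounded residual terms fit case (iii). The hypothesis $5/2<q<3$ is precisely what makes case (ii) applicable for the cross products arising from the subcritical nonlinearity $|u|^{q-2}u$: the bound $q-2>1/2$ ensures the required $L^2$-integrability of $|U_{j,i}(\sigma_{j,i,n}\,\cdot)|^{q-2}\,|U_{j,i'}(\,\cdot\,)|^{q-2}$ after accounting for the bubble scaling and the decay estimates of Lemmas \ref{3cncbvggftd6ettd66d} and \ref{7cncbvggftd6ettd66d}.

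\textbf{Step 3 ($L^p$-bound and interpolation).} Applying parts (ii) and (iii) of Lemma \ref{cnvbghgyf7fyfyf66t} to the respective constituent pieces of $h_n$, I conclude that $r_n\in L^p(\R^3,\C^4)$ with $\|r_n\|_{L^p(\R^3)}\le C_p$ for every $p\ge 3/2$, uniformly in $n$. Combined with $r_n\to 0$ in $L^3$, for every $3/2<p\le 3$ choose $\theta=3/p-1\in[0,1)$ satisfying $\tfrac{1}{p}=\tfrac{2\theta}{3}+\tfrac{1-\theta}{3}$ and use Hölder's inequality to get
\begin{equation*}
\|r_n\|_{L^p(\R^3)}\le \|r_n\|_{L^{3/2}(\R^3)}^{\theta}\,\|r_n\|_{L^3(\R^3)}^{1-\theta},
\end{equation*}
whose right-hand side tends to $0$ as $n\to\infty$, completing the proof.

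\textbf{Main obstacle.} The delicate point is Step 1: one must choose $M_n$ so that $\|M_n\|_{L^3}\le\gamma_*$ while leaving a tractable right-hand side $h_n$. The naive choice $M_n:=|u_n|$ coming from the Sobolev-critical term fails because $\|u_n\|_{L^3}$ is bounded below, not small. The trick will be to put into $M_n$ only a spatially localized or remainder-type piece (e.g.\ using $\chi_2(\epsilon_n x)$, or a piece controlled by $|r_n|$ which vanishes in $L^3$), and to move the remaining "large" part of the critical coefficient into $h_n$, where, after substituting $u_n=v_n+r_n$, it re-expresses itself as a sum of bubble-interaction terms falling under hypotheses (i)--(iii) of Lemma \ref{cnvbghgyf7fyfyf66t}.
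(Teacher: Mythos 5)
Your proposal takes a genuinely different route from the paper, but it has a circularity that prevents it from working as stated. You want to write a massless-Dirac equation $-\textnormal{i}\alpha\cdot\nabla r_n - M_n r_n = h_n$ and invoke Lemma \ref{cnvbghgyf7fyfyf66t}, which requires $\|M_n\|_{L^3(\R^3)}\le\gamma_*$. But look at how the paper actually verifies that smallness (in the proof of Lemma \ref{azscmbnbj88giuguugudd}, which is the \emph{next} lemma): whatever careful choice of $M_n$ you make, the subcritical piece $|u_n|^{q-2}u_n$ of the nonlinearity unavoidably produces a contribution to $M_n$ bounded by $C\,|r_n|^{q-2}$ after expanding $u_n=v_n+r_n$, and controlling $\||r_n|^{q-2}\|_{L^3}=\|r_n\|_{L^{3(q-2)}}^{q-2}$ requires knowing $\|r_n\|_{L^{3(q-2)}}\to 0$. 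Since $3(q-2)\in(3/2,3)$ for $5/2<q<3$, that is precisely part of the statement of Lemma \ref{mcnvjjf0000d0eq3w3} you are trying to prove. The paper indeed cites Lemma \ref{mcnvjjf0000d0eq3w3} in the very step where it establishes $\|M_n\|_{L^3}\to 0$, so your plan inverts the logical order of the two lemmas. Your ``main obstacle'' paragraph correctly senses the danger around the Sobolev-critical coefficient $|u_n|$, but the real obstruction is in the \emph{subcritical} coefficient $|u_n|^{q-2}$, where the remainder piece lives in an $L^p$ space that is not yet under control.

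The paper avoids this entirely by not touching the Dirac operator machinery for this lemma. It instead starts from the Kato-type scalar differential inequality
\[
-\Delta W_n + a^2 W_n \leq \bigl(\mathcal{H}_{\eps_n}(x,W_n)\bigr)^2 W_n,\qquad W_n:=|u_n|,
\]
borrowed from \cite{WZ1}. In the exterior region $\eps_n x\notin B_{R_0/2}(0)$ the degeneracy assumption $(V_1)$ yields a positive lower-order coefficient $\frac{a\gamma}{2(1+|\eps_n x|^\tau)}$, giving the stretched-exponential decay \eqref{decaywn} of $W_n$; in the interior region it uses the assumption $5/2<q<3$ so that $2(q-2)+1\in(2,3)$, which makes $W_n^{2(q-2)+1}$ controllable by the a priori $L^3$ bound from Lemma \ref{sbdd}. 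This produces a uniform $L^1$ bound on $W_n$, hence uniform $L^p$ bounds for $1\le p\le 3$, and the claimed $L^p$ control and convergence of $r_n$ then follows by subtracting off the bubbles (which are controlled by Lemmas \ref{3cncbvggftd6ettd66d} and \ref{7cncbvggftd6ettd66d}) and interpolating with $r_n\to 0$ in $L^3$. Your interpolation Step 3 is fine once a uniform $L^{p_0}$ bound for some $p_0<p$ is in hand, but Steps 1--2 as proposed cannot supply that bound without assuming the conclusion.
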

\begin{proof}
We first give the inequality $(4.4)$ of \cite{WZ1}, it reads that
\begin{align} \label{44}
\Delta |u_n| \geq a^2|u_n|-\left(\mathcal{H}_{\eps_n}(x, |u_n|)\right)^2 |u_n|,
\end{align}
where
$$
\mathcal{H}_{\eps_n}(x, |u_n|):=-V(\eps_n x) + \frac 18 \chi_1(\eps_n x) V(\eps_n x) \tilde{\xi}(x, |u_n|) + f_{\eps_n}(x, |u_n|).
$$
Let $W_n=|u_n|$, then there holds that
\begin{align} \label{vcmbnjjhih8ufufttf}
-\Delta W_n+a^2W_n\leq \left(\mathcal{H}_{\eps_n}(x, W_n)\right)^2 W_n, 
\end{align}
If $\eps_n x \notin B_{R_0/2}(0)$, by the definitions of $\chi_2$
and $f_{\eps_n}$, we then find that
\begin{align} \label{rn}
|\mathcal{H}_{\eps_n}(x, W_n)| \leq  |V(\epsilon_nx)| + \phi(\eps_n x),
\end{align}
where $R_0>0$ is the constant appearing in the definition of
$\chi_1$ such that $\mathcal{M}^{\delta_0 +1} \subset
B_{R_0/2}(0)$. By utilizing the condition $(V_1)$, \eqref{rn} and
taking $R_0>0$ larger if necessary, there then holds that
$$
a^2-\left(\mathcal{H}_{\eps_n}(x, W_n)\right)^2 \geq
\frac{a\gamma}{2(1 + |\eps_n x|^{\tau})}.
$$
This along with \eqref{vcmbnjjhih8ufufttf} gives that
\begin{align} \label{equwn}
-\Delta W_n+\frac{a\gamma}{2(1 + |\eps_n x|^{\tau})} W_n\leq  0.
\end{align}
Using the elements presented in the proof of  \cite[Lemma
4.5]{WZ1}, we are now able to get that there are constants $c,
C>0$ such that
\begin{align} \label{decaywn}
W_n(x) \leq C \, \textnormal{exp}\left({-c |x|^{\frac{2-\tau}{2}}}
\right)\quad \mbox{for any} \,\, \eps_n x \in  \R^3 \backslash
B_{R_0/2}(0).
\end{align}
If $\eps_n x \in B_{R_0}(0)$, from the condition $(V_1)$, we then know that there exists $\lambda>0$ such that
\begin{align}\label{cmvnbug7f8f77edfree}
a^2-(1+\lambda)V^2(\epsilon_n x)>\lambda.
\end{align}
Applying the definition of $f_{\epsilon_n}$ and Young's inequality, we obtain
that there exists a constant $C>0$ such that
\begin{align*}
\left(\mathcal{H}_{\eps_n}(x, W_n)\right)^2 W_n\leq
(1+\lambda)V^2(\epsilon_nx)W_n+C \left(W^{2(q-2)}_n+W^2_n
\right)W_n \quad \mbox{for} \ \eps_n x \in B_{R_0}(0).
\end{align*}
According to \eqref{vcmbnjjhih8ufufttf}, we then obtain that
\begin{align} \label{equWn}
-\Delta W_n+\lambda W_n\leq C \left(W^{2(q-2)}_n+W^2_n
\right)W_n \quad \mbox{for} \ \eps_n x \in B_{R_0}(0).
\end{align}
Notice that $5/2<q<3$, then $2<2(q-2)+1<3$. Taking into account
Lemma \ref{sbdd}, \eqref{equwn}, \eqref{decaywn} and \eqref{equWn}, we then have
that
\begin{align*}
\lambda \int_{\mathcal{B}_n} W_n \, dx \leq C+
C\int_{\mathcal{B}_n}(W^{2(q-2)+1}_n + W^3_n) \, dx \leq C,
\end{align*}
where $\mathcal{B}_n:=\{x \in \R^3 : \eps_n x \in B_{R_0}(0)\}$.
This jointly with \eqref{decaywn} shows that
$$
\int_{\R^3} W_n \, dx \leq C.
$$
As a consequence, we know that, for any $1\leq p \leq 3$, there exists a constant $C_p>0$
independent of $n$ such that
\begin{align}\label{vbfggftf6rtt}
\|W_n\|_{L^p(\R^3)}\leq C_p.
\end{align}
By means of Lemmas \ref{3cncbvggftd6ettd66d} and
\ref{7cncbvggftd6ettd66d}, we have that, for any $3/2<p\leq 3$,
there exists a constant $C'_p>0$ independent of $n$ such that
\begin{align}\label{2tfvbfggftf6rtt}
\|\sum_{i \in \Lambda_1} U_{j ,i}(\cdot -x_{j, i, n}) + \sum_{i
\in \Lambda_{\infty}} \sigma_{j, i, n} U_{j, i}(\sigma_{j, i,
n}(\cdot-x_{j, i, n}))\|_{L^p(\R^3)}\leq C'_p.
\end{align}
Using \eqref{pd}, \eqref{vbfggftf6rtt} and
\eqref{2tfvbfggftf6rtt}, we now deduce that, for any $3/2< p\leq
3$, there exists a constant $C_p''>0$ independent of $n$ such that
\begin{align}\label{2vbfggftf6rtt}
\|r_n\|_{L^p(\R^3)}\leq C_p''.
\end{align}
On the other hand, since $r_n \to 0$ in $L^3(\R^3, \C^4)$ as $n \to
\infty$, see Lemma \ref{cmvnvhhguf7fuufss}, it then follows from
\eqref{2vbfggftf6rtt} that $ r_n \to 0$ in $L^p(\R^3, \C^4)$ for any $3/2< p \leq 3$. Thus we
have completed the proof.
\end{proof}

\begin{lem}\label{azscmbnbj88giuguugudd}
Assume $5/2<q<3,$ then, for any  $p \geq 2$, there exists a constant
$C_p>0$ independent of $n$ such that
\begin{align}\label{vcmbniigf8fuufttdf}
\|r_n\|_{L^p(\mathcal{A}^2_n)}\leq C_p,
\end{align}
where $\mathcal{A}_{n}^2$ is defined by \eqref{defan2}.
\end{lem}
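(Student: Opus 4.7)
For $p\in[2,3]$ the bound is immediate from Lemma \ref{mcnvjjf0000d0eq3w3}, since $\|r_n\|_{L^p(\R^3)}\to 0$. So the genuine content is the range $p>3$, and the idea is to derive a Dirac--type equation for $r_n$ of the shape
\begin{equation*}
-\mathrm{i}\alpha\cdot\nabla r_n - M_n r_n = h_n,
\end{equation*}
and then apply Lemma \ref{cnvbghgyf7fyfyf66t} on the safe annulus $\mathcal{A}^2_n$. To set up this equation, I would start from \eqref{mdirac} with $\eps=\eps_n$ satisfied by $u_n$, and subtract the limit equations \eqref{cnvcbv99fufjhhf} and \eqref{sxdd4mdirac} satisfied by the bubbles $U_{j,i}$ (appropriately rescaled and translated for $i\in\Lambda_\infty$, translated only for $i\in\Lambda_1$). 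The nonlinearity $f_{\eps_n}(x,|u_n|)u_n$ will be linearized around the bubble sum: a first--order Taylor expansion of $t\mapsto|t|t$ and $t\mapsto|t|^{q-2}t$ separates a ``diagonal'' factor that multiplies $r_n$, which I put into $M_n$, from genuine cross-terms that go into $h_n$.

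The first analytic task is verifying $\|M_n\|_{L^3(\R^3)}\leq\gamma_*$ for all large $n$. The coefficient $M_n$ contains $-a\beta-V(\eps_n x)+\frac{1}{8}\chi_1(\eps_n x)V(\eps_n x)\tilde{\xi}(x,|u_n|)$ plus terms of the form $|u_n|^{q-2}$ and $|u_n|$ arising from the linearization. The first block is not small in $L^\infty$, but it is an acceptable lower-order perturbation that is handled together with $a\beta$ by the machinery of Lemma \ref{cnvbghgyf7fyfyf66t} (as was done in \cite{CG}); the potentially dangerous terms are the polynomial ones. For these I would split $u_n$ as bubbles plus $r_n$ and use that bubbles have uniformly bounded $L^3$-norm (so their contributions can be absorbed by choosing $\overline{C}$ large in the definition of $\mathcal{A}^2_n$, exploiting the pointwise decay from Lemmas \ref{3cncbvggftd6ettd66d} and \ref{7cncbvggftd6ettd66d}), while the $r_n$-contribution is small by Lemma \ref{mcnvjjf0000d0eq3w3} and the assumption $5/2<q<3$.

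The second task is to decompose $h_n$ so that every summand fits exactly one of the templates (i)(1)--(3), (ii), (iii) in Lemma \ref{cnvbghgyf7fyfyf66t}. Schematically, after expanding $|u_n|u_n$ and $|u_n|^{q-2}u_n$ as a sum over the bubble profiles plus the remainder, I expect the following dictionary: products of two distinct $\Lambda_\infty$-bubbles match template (i)(1), and after summation over the second index they match (i)(2); products of a $\Lambda_\infty$-bubble with $r_n$ (or with $M_n$) match (i)(3); mixed products of a $\Lambda_\infty$-bubble with a $\Lambda_1$-bubble, with both the $q-2$ and the quadratic powers present, match (ii); and the remaining error terms, namely the differences $(V(\eps_n x)-V(x_{j,i}))U_{j,i}$, $\chi_1(\eps_n x)\tilde{\xi}(\cdot,|u_n|)u_n$, $\chi_2(\eps_n x)g_{\eps_n}(x,|u_n|)u_n$, and the discrepancies $(h_{\eps_n}(|u_n|)-|u_n|^{q-2}-|u_n|)$ applied to the bubble profiles, can be shown to be uniformly bounded in $L^2(\R^3)\cap L^3(\R^3)$ using Lemma \ref{sbdd}, the uniform decay of the bubbles, and the pointwise bounds $\tilde{\xi}\leq 2$, $g_{\eps_n}\leq\phi$; these fit template (iii). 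Summing the individual $L^p(\mathcal{A}^2_n)$ bounds produced by Lemma \ref{cnvbghgyf7fyfyf66t} yields \eqref{vcmbniigf8fuufttdf}.

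The main obstacle will be the last step: organizing the expansion of the cubic and $q$-power nonlinearities applied to $u_n=\sum(\text{bubbles})+r_n$ into a clean linear combination of pieces that each satisfy one of the five templates, and simultaneously keeping track of the non-polynomial terms coming from the potential $V$, the penalization $Q_{\eps_n}$ and the truncation $m_{\eps_n}$. The two helpful facts here are the pointwise decay estimates of Lemmas \ref{3cncbvggftd6ettd66d} and \ref{7cncbvggftd6ettd66d} (which make all bubble factors lie in every $L^p$ with explicit bounds), and the orthogonality statement (2) of Lemma \ref{cmvnvhhguf7fuufss} together with Lemma \ref{ncbvjgfu88g8g8f7} (which ensure that in $\mathcal{A}^2_n$ no $\Lambda_\infty$-concentration point is close, so the bubbles from $\Lambda_\infty\setminus\{i_\infty\}$ and from $\Lambda_1$ have small amplitude there).
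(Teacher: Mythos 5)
Your high-level plan is the same as the paper's: handle $2\leq p\leq 3$ directly, and for $p>3$ rewrite the equation satisfied by $r_n$ in the form $-\mathrm{i}\alpha\cdot\nabla r_n - M_n r_n = h_n$ and invoke Lemma \ref{cnvbghgyf7fyfyf66t} after matching each piece of $h_n$ to one of the templates. However, the central step --- achieving $\|M_n\|_{L^3(\R^3)}\leq\gamma_*$ --- is not addressed correctly, and as sketched your argument would not close.

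The problem is with what you place inside $M_n$. A Taylor/linearization of $t\mapsto|t|t$ and $t\mapsto|t|^{q-2}t$ around the bubble sum produces a ``diagonal'' coefficient of size $|u_n|$ and $|u_n|^{q-2}$. If this coefficient is $M_n$, then $\|M_n\|_{L^3(\R^3)}$ is \emph{not} small: $\|u_n\|_{L^3}$ is bounded away from zero by Lemma \ref{sbdd}, and the rescaled $\Lambda_\infty$-bubbles alone already have $L^3$-norm of order one. Your proposed fix --- ``absorbed by choosing $\overline{C}$ large in the definition of $\mathcal{A}^2_n$'' --- does not repair this, for two reasons. First, $\overline{C}$ is determined by Lemma \ref{ncbvjgfu88g8g8f7} and is not a free parameter. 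Second, and more importantly, the hypothesis of Lemma \ref{cnvbghgyf7fyfyf66t} is a \emph{global} bound $\|M_n\|_{L^3(\R^3)}\leq\gamma_*$; localizing to the annulus $\mathcal{A}^2_n$ is irrelevant at this stage (the conclusion of the lemma is localized, not the hypothesis). Similarly, $-a\beta - V(\eps_n x) + \tfrac18\chi_1\cdot V\cdot\tilde\xi$ cannot sit in $M_n$ (the lemma requires $M_n\in L^3(\R^3,\R)$ with small norm); in the paper these stay on the right-hand side as part of $h_n$, where they are bounded in $L^2\cap L^3$ by Lemma \ref{mcnvjjf0000d0eq3w3} and handled by template (iii).

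What the paper actually does is an algebraic telescoping, not a linearization. $M_n$ is defined as the discrepancy $f_{\eps_n}(x,|u_n|) - \sum_{\Lambda_1}f_{\eps_n}(x,|U_{j,i}|) - \sum_{\Lambda_\infty}f_{\eps_n}(x,\sigma_{j,i,n}|U_{j,i}(\sigma\cdot)|) + (1-\chi_2)\sum_{\Lambda_\infty}\sigma^{q-2}|U_{j,i}(\sigma\cdot)|^{q-2}$, and the products ``(full bubble)$\times r_n$'' are deliberately pulled out of $M_n r_n$ and placed as separate $h_k$'s matching templates (i)(3), (ii) or (iii). With that definition one obtains the pointwise bound $|M_n|\leq C(|r_n|+|r_n|^{q-2}) + \sum_{\Lambda_\infty}\sigma^{q-2}|U_{j,i}(\sigma\cdot)|^{q-2}+C\chi_2(\eps_n\cdot)\phi$, and \emph{each} summand vanishes in $L^3(\R^3)$: the $r_n$-terms by Lemma \ref{mcnvjjf0000d0eq3w3}, the $\sigma^{q-2}|U(\sigma\cdot)|^{q-2}$ term by scaling (this is exactly where $q>5/2$, i.e.\ $3(q-2)>3/2$, is used), and the cutoff term by dominated convergence. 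That is the mechanism making $\|M_n\|_{L^3(\R^3)}\to 0$; no choice of $\overline{C}$ is involved. You should replace the linearization step by this telescoping identity, and then your template-matching dictionary (which is otherwise essentially correct) can be carried out as sketched.
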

\begin{proof}
Let us first consider the case $2 \leq p \leq 3$. In this case, by \eqref{pd}, Lemmas \ref{sbdd},
\ref{3cncbvggftd6ettd66d} and \ref{7cncbvggftd6ettd66d}, it is easy to see that there is a constant $C_p>0$
independent of $n$ such that
$$
\|r_n\|_{L^p(\R^3)}\leq C_p.
$$
This then proves \eqref{vcmbniigf8fuufttdf} for any $2 \leq p \leq 3$. We next consider the
case $p>3$. Note that $u_n$ satisfies the equation
\begin{align*} 
-\textnormal{i} \alpha \cdot \nabla u_n + a  \beta
u_n + V(\epsilon_nx)
u_n - \frac 18 \chi_1(\eps_n x) V(\eps_n x) \tilde{\xi}(x, |u_n|)u_n=f_{\epsilon_n}(x, |u_n|)
u_n.
\end{align*}
From \eqref{pd} and Lemmas \ref{xncbdggdtdtttd} and
\ref{2xncbdggdtdtttd}, we then get that
\begin{align}\label{hggyf77fkalloiu}
\begin{split}
&-\textnormal{i} \alpha \cdot \nabla r_n \\
&= - a \beta r_n- V(\epsilon_nx) r_n +\frac 18 \chi_1(\eps_n x) V(\eps_n x) \tilde{\xi}(x, |u_n|)u_n+ f_{\epsilon_n}(x, |u_n|)u_n \\
&\quad+\sum_{i\in\Lambda_1}\big((\textnormal{i} \alpha \cdot \nabla U_{j,i})(x-x_{j,i,n})-a  \beta U_{j,i}(x-x_{j,i,n})-V(\epsilon_n x)U_{j,i}(x-x_{j,i,n})\big)\\
&\quad+\sum_{i\in\Lambda_\infty}\big(\sigma_{j,i,n}^{2}(\textnormal{i}
\alpha \cdot \nabla
U_{j,i})(\sigma_{j,i,n}(x-x_{j,i,n}))- \sigma_{j,i,n} a\beta U_{j,i}(\sigma_{j,i,n}(x-x_{j,i,n})) \\
&\quad-\sigma_{j,i,n}V(\epsilon_n x) U_{j,i}(\sigma_{j,i,n}(x-x_{j,i,n})) \big)\\
&=- a  \beta r_n - V(\epsilon_nx) r_n +\frac 18 \chi_1(\eps_n x) V(\eps_n x) \tilde{\xi}(x, |u_n|)r_n+f_{\epsilon_n}(x,|u_n|)u_n\\
&\quad +I^1_n(x)+I^\infty_n(x),
\end{split}
\end{align}
where
\begin{align*}
\begin{split}
I^1_n(x)&:=\sum_{i\in\Lambda_1}-\Big((1-\chi_2(x_{j,i})) \left( |U_{j,i}(x-x_{j,i,n})|)^{q-2}+ |U_{j,i}(x-x_{j,i,n})|)^2\right)U_{j,i}(x-x_{j,i,n}) \\
&\quad \qquad \,\, \, \, +V(x_{j,i})U_{j,i}(x-x_{j,i,n})-V(\epsilon_n x)
U_{j,i}(x-x_{j,i,n}) \\
& \quad \qquad \,\, \, \, +\frac 18 \chi_1(\eps_n x) V(\eps_n x)
\tilde{\xi}(x, |u_n|) U_{j,i}(x-x_{j,i,n}) \Big)
\end{split}
\end{align*}
and
\begin{align*}
\begin{split}
I^\infty_n(x)&:=\sum_{i\in\Lambda_\infty}\Big(-\sigma_{j,i,n}^2 (1-\chi(x_{j,i}))|U_{j,i}(\sigma_{j,i,n}(x-x_{j,i,n}))| U_{j,i}(\sigma_{j,i,n}(x-x_{j,i,n})) \\
&\qquad \qquad -\sigma_{j,i,n} a\beta
U_{j,i}(\sigma_{j,i,n}(x-x_{j,i,n}))-\sigma_{j,i,n} V(\epsilon_n
x) U_{j,i}(\sigma_{j,i,n}(x-x_{j,i,n})) \\
&\qquad \qquad +\frac {1}{8} \sigma_{j,i,n}\chi_1(\eps_n x)
V(\eps_n x) \tilde{\xi}(x, |u_n|)
U_{j,i}(\sigma_{j,i,n}(x-x_{j,i,n}))\Big).
\end{split}
\end{align*}
Applying again \eqref{pd}, we have that
\begin{align*}
f_{\epsilon_n}(x,|u_n|)u_n&=M_n(x)r_{n}+\sum_{i\in\Lambda_1}f_{\epsilon_n}(x,|U_{j,i,n}(\cdot-x_{j,i,n})|)r_n\nonumber\\
&\quad+\sum_{i\in\Lambda_\infty}f_{\epsilon_n}(x, \sigma_{j,i,n} |U_{j,i}(\sigma_{j,i,n}(x-x_{j,i,n}))|)r_n\nonumber\\
&\quad+\sum_{i\in\Lambda_1}f_{\epsilon_n}(x, |u_n|)U_{j,i}(x-x_{j,i,n})\nonumber\\
&\quad+\sum_{i\in\Lambda_\infty} \sigma_{j,i,n}
f_{\epsilon_n}(x, |u_n|)U_{j,i}((\sigma_{j,i,n}(x-x_{j,i,n})) \nonumber\\
&\quad-(1-\chi_2(\epsilon_n x))r_n\sum_{i\in\Lambda_\infty}\sigma_{j,i,n}^{q-2}|U_{j,i}(\sigma_{j,i,n}(x-x_{j,i,n}))|^{q-2},
\end{align*}
where
\begin{align*}
M_n(x)&:=f_{\epsilon_n}(x, |u_n|)-\sum_{i\in\Lambda_1}f_{\epsilon_n}(x, |U_{j,i}(x-x_{j,i,n})|) \\ 
&\quad-\sum_{i\in\Lambda_\infty}f_{\epsilon_n}(x, \sigma_{j,i,n}|U_{j,i}(\sigma_{j,i,n}(x-x_{j,i,n}))|)\\ 
&\quad+(1-\chi_2(\epsilon_n
x))\sum_{i\in\Lambda_\infty}\sigma_{j,i,n}^{q-2}|U_{j,i}(\sigma_{j,i,n}(x-x_{j,i,n}))|^{q-2}.
\end{align*}
In view of the definition of $f_{\eps_n}$, it is not hard to find that there exists a constant $C>0$ independent
of $n$  such that, for any $x \in \R^3$,
\begin{align*}
|M_n(x)| \leq C\left(|r_n(x)|+|r_n(x)|^{q-2}
\right)+\sum_{i\in\Lambda_\infty}\sigma^{q-2}_{j,i,n}|U_{j,i}(\sigma_{j,i,n}(x-x_{j,i,n}))|^{q-2}+C\chi_2(\epsilon_n
x)\phi(x).
\end{align*}
Notice that $5/2<q<3,$ then $ 3/2<3(q-2)<3$. By Lemma \ref{mcnvjjf0000d0eq3w3}, we then have that
$$
\lim_{n\rightarrow\infty}\|r_n\|_{L^3(\R^3)}=0, \quad \lim_{n\rightarrow\infty}\|r_n\|_{L^{3(q-2)}(\R^3)}=0.
$$
By Lemma  \ref{3cncbvggftd6ettd66d}, it holds that
$$\lim_{n\rightarrow\infty}\|\sum_{i\in\Lambda_\infty}
|\sigma_{j,i,n}U_{j,i}(\sigma_{j,i,n}(\cdot-x_{j,i,n}))|^{q-2}\|_{L^{3}(\R^3)}=0.$$
Moreover, since $0\in\mathcal{M}$, then $\chi_2(\epsilon_n
x)\rightarrow 0$ a.e. in $\R^3$ as $n\rightarrow\infty.$
Therefore, we get that
$$\lim_{n\rightarrow\infty}\|\chi_2(\epsilon_n\cdot)\phi\|_{L^3(\R^3)}=0.$$ As a consequence, we have that
$\lim_{n\rightarrow\infty}\|M_n\|_{L^3(\R^3)}=0$. This means that,
for any $n\in \N^+$ large enough,
\begin{align}\label{cnvbvhhgfyf66fyf777}
\|M_n\|_{L^3(\R^3)}<\gamma_*,
\end{align}
where $\gamma_*$ is the constant in Lemma
\ref{cnvbghgyf7fyfyf66t}. In addition, by the definition of $M_n$,
we can see that
\begin{align*}
&f_{\epsilon_n}(x,|u_n|)U_{j,i}(x-x_{j,i,n})\nonumber\\
&=\sum_{i'\in\Lambda_\infty}f_{\epsilon_n}(x, \sigma_{j,i',n}|U_{j,i'}(\sigma_{j,i',n}(x-x_{j,i',n}))|)U_{j,i}(x-x_{j,i,n})\nonumber\\
&\quad
-(1-\chi_2(\epsilon_n x))\sum_{i'\in\Lambda_\infty}\sigma_{j,i',n}^{q-2}|U_{j,i'}(\sigma_{j,i',n}(x-x_{j,i',n}))|^{q-2}U_{j,i}(x-x_{j,i,n})\nonumber\\
&\quad+\sum_{i'\in\Lambda_1}f_{\epsilon_n}(x, |U_{j,i'}(x-x_{j,i',n})|)U_{j,i}(x-x_{j,i,n})\nonumber\\
&\quad+M_n(x)U_{j,i}(x-x_{j,i,n})
\end{align*}
 and
\begin{align*}
&\sigma_{j,i,n} f_{\epsilon_n}(x, |u_n|)U_{j,i}(\sigma_{j,i,n}(x-x_{j,i,n}))\nonumber\\
&=\sigma_{j,i,n}
\sum_{i'\in\Lambda_\infty}f_{\epsilon_n}(x, \sigma_{j,i',n} |U_{j,i'}(\sigma_{j,i',n}(x-x_{j,i',n}))|)U_{j,i,n}(\sigma_{j,i,n}(x-x_{j,i,n}))\nonumber\\
&\quad -\sigma_{j,i,n}(1-\chi(\epsilon_n x))\sum_{i'\in\Lambda_\infty}\sigma_{j,i',n}^{q-2}|U_{j,i',n}(\sigma_{j,i',n}(x-x_{j,i',n}))|^{q-2}U_{j,i}(\sigma_{j,i,n}(x-x_{j,i,n}))\nonumber\\
&\quad+\sigma_{j,i,n}\sum_{i'\in\Lambda_1}f_{\epsilon_n}(x, |U_{j,i',n}(x-x_{j,i',n})|)U_{j,i}(\sigma_{j,i,n}(x-x_{j,i,n}))\nonumber\\
&\quad+\sigma_{j,i,n}M_n(x)U_{j,i}(\sigma_{j,i,n}(x-x_{j,i,n})).
\end{align*}
From the discussion above, we know that $r_n$ satisfies the following equation
\begin{align*}
-\textnormal{i} \alpha \cdot \nabla r_n-M_n r_n=\sum_kh_k.
\end{align*}
By means of the definition of $f_{\eps_n}$ and the fact that $0 \leq \tilde{\xi}(x ,t) \leq 2$ for any $x \in \R^3$ and $t \in \R$, it is not difficult to check that every term $h_k$ fulfills one of conditions in Lemma \ref{cnvbghgyf7fyfyf66t} imposed on $h_n$.  Taking advantage of Lemma \ref{cnvbghgyf7fyfyf66t} and \eqref{cnvbvhhgfyf66fyf777}, we then get \eqref{vcmbniigf8fuufttdf}, and the proof is completed.
\end{proof}

\begin{lem}\label{ncbvuuf8f77f7987}
Assume $5/2<q<3,$ then, for any $0<\nu<1/2$, there exists a
constant $C_\nu>0$ independent of $n$ such that
$$
|u_n(x)|\leq C_\nu\sigma^{\nu}_n,\ x\in\mathcal{A}^3_n,
$$
where
$$\mathcal{A}^3_{n}=B_{(\overline{C}+4)\sigma^{-\frac{1}{2}}_n}(x_n)\setminus
B_{(\overline{C}+1)\sigma^{-\frac{1}{2}}_n}(x_n).$$
\end{lem}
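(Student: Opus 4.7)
My plan is to combine the profile decomposition \eqref{pd} with Dirac elliptic regularity (Lemma \ref{di}) and a rescaling/Sobolev embedding argument. The three-step outline is: first, establish uniform $L^p$ bounds on $u_n$ on the larger annulus $\mathcal{A}^2_n$ for every $p\ge 2$; second, upgrade these to $W^{1,p}$ bounds on an intermediate annulus by applying Lemma \ref{di} to a suitable cutoff of $u_n$; third, deduce the claimed $L^\infty$ estimate on $\mathcal{A}^3_n$ by rescaling onto a fixed reference annulus and applying Morrey's inequality. The threshold $\nu<1/2$ should appear naturally, since Morrey's embedding $W^{1,p}\hookrightarrow L^\infty$ on a $3$-dimensional set requires $p>3$, which under the rescaling corresponds precisely to a Sobolev loss $\sigma_n^{3/(2p)}$ with $3/(2p)<1/2$.

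\textbf{Execution.} For the first step, I would substitute \eqref{pd} and check that every bubble contribution is $O(1)$ pointwise on $\mathcal{A}^2_n$. The principal bubble, by Lemma \ref{3cncbvggftd6ettd66d} and $|x-x_n|\ge c\sigma_n^{-1/2}$, obeys
\begin{align*}
\sigma_n|U_{j,i_\infty}(\sigma_n(x-x_n))|\le \frac{C\sigma_n}{1+\sigma_n^2|x-x_n|^2}\le C.
\end{align*}
For other $i\in\Lambda_\infty$, Lemma \ref{ncbvjgfu88g8g8f7} (applied to points of $\{x_{j,i,n}\}$) forces $|x-x_{j,i,n}|\ge c\sigma_n^{-1/2}$ on $\mathcal{A}^2_n$, and combining this with the minimality $\sigma_{j,i,n}\ge\sigma_n$ from \eqref{mcnvbbbvuvjyyfffff} again yields an $O(1)$ bound. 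The $\Lambda_1$-bubbles are bounded by Lemma \ref{7cncbvggftd6ettd66d} and $\|r_n\|_{L^p(\mathcal{A}^2_n)}\le C_p$ by Lemma \ref{azscmbnbj88giuguugudd}, so altogether $\|u_n\|_{L^p(\mathcal{A}^2_n)}\le C_p$ uniformly in $n$, for every $p\ge 2$. For the second step, I would pick a smooth cutoff $\eta_n$ with $\eta_n\equiv 1$ on an intermediate annulus $\mathcal{A}^3_n\subset\mathcal{A}_n'\subset\mathcal{A}^2_n$, supported in $\mathcal{A}^2_n$, and $\|\nabla\eta_n\|_\infty\le C\sigma_n^{1/2}$, and apply Lemma \ref{di} to $\eta_n u_n$. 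Using the pointwise bound $|f_{\epsilon_n}(x,|u_n|)u_n|\le C(|u_n|+|u_n|^{q-1}+|u_n|^2)$ together with the uniform $L^r$ control for all finite $r$ obtained above, this produces $\|\nabla u_n\|_{L^p(\mathcal{A}_n')}\le C\sigma_n^{1/2}$. Finally, setting $\tilde u_n(y)=u_n(x_n+\sigma_n^{-1/2}y)$, the change of variables gives $\|\tilde u_n\|_{L^p(\mathcal{A}^3_*)}+\|\nabla_y\tilde u_n\|_{L^p(\mathcal{A}^3_*)}\le C\sigma_n^{3/(2p)}$ on a fixed annulus $\mathcal{A}^3_*$. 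Morrey's inequality on $\mathcal{A}^3_*$ with any $p>3$ yields
\begin{align*}
\|u_n\|_{L^\infty(\mathcal{A}^3_n)}=\|\tilde u_n\|_{L^\infty(\mathcal{A}^3_*)}\le C\sigma_n^{3/(2p)},
\end{align*}
and given $\nu\in(0,1/2)$, choosing $p>3/(2\nu)$ (which is automatically greater than $3$ exactly because $\nu<1/2$) concludes the proof.

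\textbf{Main obstacle.} The delicate point is the interplay between the Sobolev-critical cubic piece $|u_n|^2u_n$ and the localization loss $\sigma_n^{1/2}$ arising from $\nabla\eta_n$. Controlling the critical term requires uniform $L^{2p}$ bounds on $u_n$ for arbitrarily large $p$, which is precisely why Lemma \ref{mcnvjjf0000d0eq3w3} (and the assumption $5/2<q<3$ that drives it through the $3(q-2)>3/2$ threshold) is indispensable in the first step. The $\sigma_n^{1/2}$ loss from the cutoff is then exactly compensated by the gradient rescaling factor $\sigma_n^{-1/2}$, so the only residual factor is the Sobolev loss $\sigma_n^{3/(2p)}$, which is the source of the sharp threshold $\nu<1/2$.
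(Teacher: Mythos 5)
Your proof is correct, but it reorganizes the argument around $u_n$ itself rather than around the remainder $r_n$ as the paper does, and it realizes the key exponent count by rescaling instead of by interpolation. The paper writes the equation \eqref{hggyf77fkalloiu} for $r_n$, cuts off with $\eta_n$, bounds $(-\textnormal{i}\alpha\cdot\nabla)(\eta_n r_n)$ in $L^p$, trades the cutoff-loss $\sigma_n^{1/2}\|r_n\|_{L^p(\mathcal{A}^2_n)}$ against a higher exponent $p'>p$ by H\"older on the annulus $\mathcal{A}^2_n$ (whose volume is $\sim\sigma_n^{-3/2}$, producing a factor $\sigma_n^{-\frac{3}{2}(\frac1p-\frac1{p'})}$), and then applies the global embedding $W^{1,p}(\R^3)\hookrightarrow L^\infty(\R^3)$ with the explicit choice $p=3/(1-\nu)$, $p'=3/\nu$; at the end the bubble bounds on $\mathcal{A}^3_n$ are added to $|r_n|$ to recover $|u_n|$. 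You instead first assemble the uniform bound $\|u_n\|_{L^p(\mathcal{A}^2_n)}\le C_p$ for every $p\ge 2$ from the decay of the bubbles (Lemmas~\ref{3cncbvggftd6ettd66d},~\ref{7cncbvggftd6ettd66d}, the separation furnished by Lemma~\ref{ncbvjgfu88g8g8f7}, and the minimality $\sigma_{j,i,n}\ge\sigma_n$) together with $\|r_n\|_{L^p(\mathcal{A}^2_n)}\le C_p$ from Lemma~\ref{azscmbnbj88giuguugudd}; you then apply Lemma~\ref{di} to $\eta_n u_n$, rescale by $\sigma_n^{-1/2}$ onto a fixed reference annulus, and invoke Morrey. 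The two are the same dimensional calculation in different clothing: the paper's H\"older factor $|\mathcal{A}^2_n|^{1/p-1/p'}$ is precisely the annulus-volume Jacobian that your rescaling makes explicit, and both parametrizations cover the full range $\nu\in(0,1/2)$. Your version makes the origin of the sharp threshold $1/2$ more transparent (the rescaled gradient exactly cancels the $\sigma_n^{1/2}$ cutoff loss, leaving only the Sobolev loss $\sigma_n^{3/(2p)}$ with $p>3$), and it dispenses with the paper's final reassembly step $|u_n|\le|r_n|+|\text{bubbles}|$; the paper's version avoids any change of domain and exhibits the exponent $\nu$ directly in the interpolation. One small note: in your "main obstacle" paragraph the $L^{2p}(\mathcal{A}^2_n)$ control on $u_n$ for large $p$ comes from Lemma~\ref{azscmbnbj88giuguugudd} (which in turn relies on Lemma~\ref{mcnvjjf0000d0eq3w3} and the assumption $5/2<q<3$), so that is the lemma to credit directly for the large-$p$ bounds.
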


\begin{proof}
Note that $0 \leq \tilde{\xi}(x ,t) \leq 2$ for any $x \in \R^3$ and $t \in \R$.
In view of the definition of $I^1_n$ and Lemma \ref{7cncbvggftd6ettd66d},
 there exists a constant $C>0$ independent of $n$ such
that, for any $x \in \R^3$,
\begin{align}\label{nmcnvbfhfyf66fyf}
|I^1_n(x)| \leq C.
\end{align}
By Lemma \ref{ncbvjgfu88g8g8f7}, we know that, for any
$x\in\mathcal{A}^2_n$,
\begin{align*}
\sigma_{j,i,n}|x-x_{j,i,n}|\geq \frac{1}{2}
\sigma_{j,i,n}\sigma^{-\frac{1}{2}}_n\geq\frac{1}{2}\sigma^{\frac{1}{2}}_{j,i,n}.
\end{align*}
It then follows from Lemma \ref{3cncbvggftd6ettd66d} that there
exists a constant $C>0$ independent of $n$ such that, for any
$x\in\mathcal{A}^2_n$,
\begin{align} \label{xcddnvbvh77fyfgttf4e}
|\sigma_{j,i,n} U_{j,i}(\sigma_{j,i,n}(x-x_{j,i,n}))|\leq C.
\end{align}
This then suggests that there exists a constant $C>0$ independent of $n$ such
that, for any $x\in\mathcal{A}^2_n$,
\begin{align}\label{cvngfhfyf6fyfyfyyf}
|I^\infty_n(x)|\leq C.
\end{align}
Let $\eta_{n} \in C_0^{\infty}(\R^3, [0, 1])$ be a cut-off
function satisfying $\eta_n(x)=1$ for any $ x \in
{\mathcal{A}^3_n}$, $ \eta_n(x)=0$ for any $x \not \in
\mathcal{A}^2_n$ and $|\nabla\eta_{n}(x)|\leq C\sigma^{1/2}_n$ for
any $x \in \R^3$, where $C>0$ is independent of $n$, where $\mathcal{A}_{n}^2$ is defined by \eqref{defan2}. In light of \eqref{hggyf77fkalloiu}, then
\begin{align*}
(-\textnormal{i} \alpha \cdot \nabla)(\eta_{n} r_n) &=\eta_{n}(-\textnormal{i} \alpha \cdot \nabla r_n)-\textnormal{i}\sum^3_{k=1} (\partial_k\eta_{n})\alpha_k r_n \nonumber  \\
&=\eta_{n}( - a \beta r_n - V(\epsilon_n x) r_n +\frac 18 \chi_1(\eps_n x) V(\eps_n) \tilde{\xi}(x, |u_n|)r_n+ f_{\epsilon_n}(x,|u_n|)u_n \nonumber  \\
& \qquad  \,\,\, +I^1_n(x)+I^\infty_n(x)) -\textnormal{i}\sum^3_{k=1} (\partial_k\eta_{n})\alpha_k
r_n. \nonumber
\end{align*}
At this point, proceeding as the proof of \cite[Lemma 4.18]{CG}, we are able to obtain the desired result.
For the convenience of the readers, we shall show the remaining proof. From \eqref{nmcnvbfhfyf66fyf} and \eqref{cvngfhfyf6fyfyfyyf}, it is easy to see that
\begin{align} \label{cnvnbhhgut77gytd}
|(-\textnormal{i} \alpha \cdot \nabla) \left(\eta_{n} r_n)| \leq
C\eta_{n} (|r_n|+ f_{\epsilon_n}(x,|u_n|)|u_n|+ 1 \right)
+|\nabla\eta_{n}| |r_n|,
\end{align}
where $C>0$ is a constant independent of $n$.
By the definition of $f_{\eps_n}$, we know that
\begin{align*}
|f_{\epsilon_n}(x, |t|)t|\leq C( |t|+|t|^2) \quad \mbox{for any}
\,\, x \in \R^3, t  \in \R.
\end{align*}
In virtue of \eqref{pd}, it then holds that
\begin{align*}
\begin{split}
|f_{\epsilon_n}(x,|u_n|)u_n| &\leq C\Big(|r_{n}|+|r_n|^2+\sum_{i\in\Lambda_1}( |U_{j,i}(\cdot-x_{j,i,n})|+ |U_{j,i}(\cdot-x_{j,i,n})|^2)\\
&\qquad+\sum_{i\in\Lambda_\infty}( \sigma_{j,i,n}
|U_{j,i}(\sigma_{j,i,n}(x-x_{j,i,n}))| +
\sigma_{j,i,n}^2|U_{j,i}(\sigma_{j,i,n}(x-x_{j,i,n}))|^2)\Big).
\end{split}
\end{align*}
According to Lemmas \ref{3cncbvggftd6ettd66d}, \ref{7cncbvggftd6ettd66d} and
\eqref{xcddnvbvh77fyfgttf4e}, we then deduce that, for any $x \in
\mathcal{A}^2_n$,
\begin{align*}
\eta_{n}|f_{\epsilon_n}(x,|u_n|)u_n| \leq C\eta_{n} \left
(|r_{n}|+|r_n|^2+1\right).
\end{align*}
This together with \eqref{cnvnbhhgut77gytd} yields that, for any $x \in \mathcal{A}^2_n$,
\begin{align*}
|(-\textnormal{i} \alpha \cdot \nabla)(\eta_{n} r_n)|\leq
C\eta_{n} \left(|r_n|+ |r_n|^2+1 \right)+|\nabla\eta_{n}||r_n|.
\end{align*}
As a consequence,
\begin{align*}
\int_{\R^3}|(-\textnormal{i} \alpha \cdot \nabla)(\eta_{n}r_n)|^p
\, dx \leq C \int_{\R^3}\eta_{n}^p \left(|r_n|^p+ |r_n|^{2p}+1
\right) \, dx + C\int_{\R^3} |\nabla\eta_{n}|^p|r_n|^p \, dx.
\end{align*}
By Lemma \ref{azscmbnbj88giuguugudd}, then, for any $p'>p \geq 2$,
there exist constants $C_p>0$ and $C_{p'}>0$ independent of $n$ such that
\begin{align} \label{xbcvfgtfyf66fyfy}
\begin{split}
\left(\int_{\R^3}|(-\textnormal{i} \alpha \cdot \nabla)(\eta_{n}r_n)|^p \, dx \right)^{\frac 1p }&\leq C_p+C_p\left(\int_{\R^3}|\nabla\eta_{n}|^p |r_n|^p \, dx \right)^{\frac 1p}\\
&\leq C_p+C_p\sigma^{1/2}_n\left (\int_{\mathcal{A}^2_n} |r_n|^p \, dx\right)^{\frac 1p}\\
&\leq C_p+C_p\sigma^{\frac{1}{2}(1-\frac{3}{p}+\frac{3}{p'})}_n \left(\int_{\mathcal{A}^2_n} |r_n|^{p'} \, dx \right)^{\frac {1}{p'}}\\
&\leq
C_p+C_pC_{p'}\sigma^{\frac{1}{2}(1-\frac{3}{p}+\frac{3}{p'})}_n.
\end{split}
\end{align}
For any $0<\nu<1/2$, choosing $p=3/(1-\nu)$ and $p'=3/\nu$ in
\eqref{xbcvfgtfyf66fyfy}, we then get that
\begin{align}\label{vcnbnbhghug77gugh}
\left(\int_{\R^3}|(-\textnormal{i} \alpha \cdot
\nabla)(\eta_{n}r_n)|^p \, dx\right)^{\frac 1 p}\leq
C_\nu\sigma^{\nu}_n.
\end{align}
It follows from Lemma \ref{di} that
\begin{align}\label{22cmvnbvbfhfyf66fyf}
\int_{\mathbb{R}^3}|\nabla (\eta_{n}r_n)|^p \, dx\leq C
\int_{\mathbb{R}^3}|(-\textnormal{i} \alpha \cdot \nabla)
(\eta_{n}r_n)|^p \,dx.
\end{align}
By Lemma \ref{azscmbnbj88giuguugudd}, \eqref{22cmvnbvbfhfyf66fyf} and \eqref{vcnbnbhghug77gugh}, we
then have that
\begin{align*}
\|\eta_n r_n\|_{W^{1,p}(\R^3)}\leq C''_\nu \sigma^\nu_n.
\end{align*}
Note that $p=3/(1-\nu)>3$, then
\begin{align}\label{xbcvgfgtd5tegdggdtd}
|r_{n}(x)|\leq C_\nu\sigma^{\nu}_n,\ x\in\mathcal{A}^3_n.
\end{align}
On the other hand,  by Lemma \ref{7cncbvggftd6ettd66d}, we know that
\begin{align}\label{bcvfgtfgrtyyfff}
|\sum_{i \in \Lambda_1} U_{j ,i}(x -x_{j, i, n})|\leq C, \
x\in\mathcal{A}^3_n,
\end{align}
where $C>0$ is independent of $n$. Moreover, by Lemma \ref{3cncbvggftd6ettd66d} and \eqref{xcddnvbvh77fyfgttf4e},
we have that
\begin{align}\label{ncbfhhgyfhyydyyddd}
|\sum_{i \in \Lambda_{\infty}} \sigma_{j, i, n} U_{j,
i}(\sigma_{j, i, n}(x-x_{j, i, n}))|\leq C, \ x\in\mathcal{A}^3_n,
\end{align}
where $C>0$ is independent of $n$. From \eqref{pd} and \eqref{xbcvgfgtd5tegdggdtd}-\eqref{ncbfhhgyfhyydyyddd},
we then get the result of this lemma, and the proof is
completed.
\end{proof}

\begin{lem}\label{2wncbvuuf8f77f7987}
Assume $5/2<q<3$, then, for any $0<\nu<1/2$, there exists a constant
$C_\nu>0$ independent of $n$ such that
$$
\int_{\mathcal{A}^4_n}|\nabla u_n|^2 \, dx \leq
C_\nu\sigma^{-1/2+\nu}_n,
$$
where \begin{align} \label{2221cn888834f6tfyfff}
\mathcal{A}^4_{n}=B_{(\overline{C}+7/2)\sigma^{-\frac{1}{2}}_n}(x_n)\setminus
B_{(\overline{C}+3/2)\sigma^{-\frac{1}{2}}_n}(x_n).
\end{align}
\end{lem}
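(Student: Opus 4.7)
The plan is to localize the Dirac equation for $u_n$ to a cut-off adapted to $\mathcal{A}^3_n$ (the slightly larger annulus where Lemma \ref{ncbvuuf8f77f7987} supplies a uniform pointwise control), and then convert a Dirac-operator $L^2$ estimate into a gradient $L^2$ estimate via Lemma \ref{di}. Concretely, I would pick $\eta_n \in C_0^\infty(\R^3, [0,1])$ with $\eta_n \equiv 1$ on $\mathcal{A}^4_n$, $\mbox{supp}(\eta_n) \subset \mathcal{A}^3_n$, and $|\nabla \eta_n| \leq C \sigma_n^{1/2}$ (consistent with the annular thickness $\sim \sigma_n^{-1/2}$). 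The product rule for the Dirac operator then gives
\begin{align*}
(-\textnormal{i}\alpha \cdot \nabla)(\eta_n u_n) = \eta_n\,(-\textnormal{i}\alpha \cdot \nabla u_n) - \textnormal{i}\sum_{k=1}^3 (\partial_k \eta_n)\,\alpha_k u_n.
\end{align*}

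Next I would use \eqref{mdirac} with $\eps=\eps_n$ to rewrite $-\textnormal{i}\alpha \cdot \nabla u_n$ as a multiple of $u_n$ plus $f_{\eps_n}(x,|u_n|)u_n$. Since $|V|\leq a$, $0 \leq \tilde\xi \leq 2$, $0 \leq \chi_1 \leq 1$, and the definitions of $h_{\eps_n}$ and $g_{\eps_n}$ imply the elementary bound $|f_{\eps_n}(x,t)\,t| \leq C(t + t^2)$ (as used in the proof of Lemma \ref{ncbvuuf8f77f7987}), this produces the pointwise estimate
\begin{align*}
|(-\textnormal{i}\alpha \cdot \nabla)(\eta_n u_n)| \leq C\,\eta_n\,(|u_n| + |u_n|^2) + |\nabla \eta_n|\,|u_n|.
\end{align*}
On $\mbox{supp}(\eta_n)\subset \mathcal{A}^3_n$, Lemma \ref{ncbvuuf8f77f7987} gives $|u_n| \leq C_\nu \sigma_n^\nu$ for any $0<\nu<1/2$, while $\mbox{supp}(\eta_n)$ has Lebesgue measure $O(\sigma_n^{-3/2})$. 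Squaring and integrating, the three contributions have sizes $\sigma_n^{2\nu}\cdot\sigma_n^{-3/2}$, $\sigma_n^{4\nu}\cdot\sigma_n^{-3/2}$ and $\sigma_n\cdot\sigma_n^{2\nu}\cdot \sigma_n^{-3/2}$ respectively; since $\nu<1/2$ the last one dominates, yielding
\begin{align*}
\|(-\textnormal{i}\alpha \cdot \nabla)(\eta_n u_n)\|_{L^2(\R^3)}^2 \leq C_\nu\,\sigma_n^{-1/2 + 2\nu}.
\end{align*}

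To finish, I apply Lemma \ref{di} to $\eta_n u_n$ (which lies in $W^{1,2}(\R^3,\C^4)$ with compact support by interior regularity for the subcritical perturbation of $H_a$ in \eqref{mdirac}, extended by density from $C_0^\infty$) to conclude
\begin{align*}
\int_{\mathcal{A}^4_n} |\nabla u_n|^2\, dx \leq \|\nabla(\eta_n u_n)\|_{L^2(\R^3)}^2 \leq C\|(-\textnormal{i}\alpha \cdot \nabla)(\eta_n u_n)\|_{L^2(\R^3)}^2 \leq C_\nu\,\sigma_n^{-1/2 + 2\nu}.
\end{align*}
Since Lemma \ref{ncbvuuf8f77f7987} holds for any $\nu \in (0,1/2)$, replacing $\nu$ by $\nu/2$ throughout produces exactly the stated bound $C_\nu \sigma_n^{-1/2+\nu}$ for any $\nu \in (0,1/2)$.

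The principal obstacle is simply the careful tracking of exponents in the competition between (i) the cut-off gradient cost $|\nabla \eta_n|^2 \sim \sigma_n$, (ii) the pointwise bound $|u_n|^2 \leq C_\nu \sigma_n^{2\nu}$, and (iii) the annulus volume $\sim \sigma_n^{-3/2}$. The constraint $\nu<1/2$ is precisely what guarantees the cut-off contribution wins over the $|u_n|^4$ contribution, so that $\sigma_n^{1+2\nu}\cdot\sigma_n^{-3/2}=\sigma_n^{-1/2+2\nu}$ fixes the final exponent. The only other delicate point is verifying that Lemma \ref{di} genuinely applies to $\eta_n u_n$; this is a mild regularity issue that follows from a standard bootstrap argument on \eqref{mdirac} given the already established pointwise $L^\infty$-bounds on the annulus.
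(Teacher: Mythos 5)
Your proof is correct and follows the same route as the paper's: the same cut-off $\eta_n$ adapted to $\mathcal{A}^3_n$ with $|\nabla\eta_n|\lesssim\sigma_n^{1/2}$, the same product rule/equation substitution to control $(-\textnormal{i}\alpha\cdot\nabla)(\eta_n u_n)$, the same use of Lemma \ref{ncbvuuf8f77f7987} on $\mathcal{A}^3_n$ and volume counting to isolate the dominant $\sigma_n^{-1/2+2\nu'}$ term, and the same application of Lemma \ref{di} followed by the substitution $\nu\mapsto\nu/2$. The paper carries the extra $\int|\nabla\eta_n|^2|u_n|^2$ term through \eqref{nchgyfy6fyft66s4srff} rather than noting $\nabla\eta_n\equiv 0$ on the interior of $\mathcal{A}^4_n$, but this is cosmetic and the final bound is identical.
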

\begin{proof}
Let $\eta_{n} \in C_0^{\infty}(\R^3, [0, 1])$ be a cut-off
function satisfying $\eta_n(x)=1$ for any $ x \in
{\mathcal{A}^4_n}$, $ \eta_n(x)=0$ for any $x \not \in
\mathcal{A}^3_n$ and $|\nabla\eta_{n}(x)|\leq C\sigma^{1/2}_n$ for
any $x \in \R^3$, where $C>0$ is independent of $n$. Since $u_n$ solves the equation
\begin{align*}
-\textnormal{i} \alpha \cdot \nabla u_n + a  \beta
u_n + V(\epsilon_nx)
u_n - \frac 18 \chi_1(\eps_n x) V(\eps_n x) \tilde{\xi}(x, |u_n|)u_n=f_{\epsilon_n}(x, |u_n|)
u_n,
\end{align*}
then
\begin{align}\label{22cbvnfhhfyf6ryyfyf}
\begin{split}
(-\textnormal{i} \alpha \cdot \nabla)(\eta_{n} u_n) &=\eta_{n}(-\textnormal{i} \alpha \cdot \nabla u_n)-\textnormal{i}\sum^3_{k=1} (\partial_k\eta_{n})\alpha_k u_n \\ 
&=\eta_{n}( - a \beta u_n - V(\epsilon_n x) u _{j,\epsilon_n}  + \frac 18 \chi_1(\eps_n x) V(\eps_n x) \tilde{\xi}(x, |u_n|)u_n \\ 
&\,\,\, \qquad+f_{\epsilon_n}(x, |u_n|)u_n)-\textnormal{i}\sum^3_{k=1} (\partial_k\eta_{n})\alpha_k
u_n.
\end{split}
\end{align}
Observe that $0 \leq \tilde{\xi}(x ,t) \leq 2$ and $|f_{\epsilon_n}(x, t)|\leq C(1+|t|)$ for
any $x \in \R^3$ and $t \geq 0$, it then follows from
\eqref{22cbvnfhhfyf6ryyfyf} that
\begin{align}\label{fhfhgyyghfyfttfrdf}
\begin{split}
 \hspace{-1cm} \int_{\R^3}|(-\textnormal{i} \alpha \cdot \nabla)(\eta_{n}
u_n)|^2 \, dx  \leq C \int_{\R^3}\eta_{n}^2 \left(|u_n|^2+
|u_n|^{4}\right) \, dx + \int_{\R^3}
|\nabla\eta_{n}|^2 |u_n|^2 \, dx.
\end{split}
\end{align}
Since, for any $0<\nu'<1/2,$ there exists a constant $C_{\nu'}>0$ such that
$|u_n(x)|\leq C_{\nu'}\sigma^{\nu'}_n$ for any $x\in
\mathcal{A}^3_n,$ see Lemma \ref{ncbvuuf8f77f7987},
$\mbox{supp} \, \eta_n\subset\mathcal{A}^3_n$ and $|\nabla\eta_{n}(x)|\leq C\sigma^{1/2}_n$ for
any $x \in \R^3$, then
\begin{align}\label{bfgvtgdttd77e5rterfdd}
 \int_{\R^3}\eta_{n}^2 \left(|u_n|^2+
|u_n|^{4}\right) \, dx + \int_{\R^3}
|\nabla\eta_{n}|^2 |u_n|^2 \, dx\leq
C_{\nu'}\sigma_n^{-1/2+2\nu'}.
\end{align}
From Lemma \ref{di}, we know that
\begin{align}\label{nchgyfy6fyft66s4srff}
\begin{split}
\int_{\mathcal{A}_n^4} |\nabla u_n|^2 \, dx &\leq C\Big(\int_{\R^3}  |\nabla(\eta_n u_n)|^2  \, dx  + \int_{\R^3}  |\nabla\eta_n|^2 |u_n|^2  \, dx \Big) \\
&\leq C \Big(\int_{\R^3} |(-\textnormal{i} \alpha \cdot
\nabla)(\eta_{n}u_n)|^2 \, dx + \int_{\R^3}
|\nabla\eta_n|^2 |u_n|^2  \, dx  \Big).
\end{split}
\end{align}
Combining  \eqref{fhfhgyyghfyfttfrdf}-\eqref{nchgyfy6fyft66s4srff}
and taking $\nu=2\nu'$, we then get the result of this lemma,
and the proof is completed.
\end{proof}

We are now ready to present a local Pohozaev identity with respect to $u_n$.
To do this, let us write $u_n=(u^1_n, u^2_n, u^3_n, u^4_n)$, $\beta=(a^0_{lm})_{4\times 4}$ and $\alpha_i=(a^i_{lm})_{4\times4}$ for $ i=1,2,3$.

\begin{lem}\label{ncbvhhyf6yyrt6f6ryr}
Let $B_n=B_{(\overline{C}+3)\sigma^{-{1}/{2}}_n}(x_n)$  and
$\psi\in C^\infty_0(\mathbb{R}, [0, 1])$ be such that $\psi(t)=1$
for any $t\leq (\overline{C}+2)\sigma^{-{1}/{2}}_n$, $\psi(t)=0$
for any $t\geq (\overline{C}+3)\sigma^{-{1}/{2}}_n$, $\psi'(t)\leq
0$ and $|\psi'(t)|\leq 2\sigma^{{1}/{2}}_n$ for any $t \in \R$.
Let $\varphi(x)=\psi(|x-x_n|)$ for any $x \in \R^3$. Then the
following identity holds,
\begin{align*}  \nonumber
&\int_{\R^3} \left(3F_{\epsilon_n}(x, |u_n|)
-f_{\epsilon_n}(x, |u_n|)|u_n|^2\right) \varphi  \,
dx+\epsilon_n\int_{\R^3}\left((x-x_n)
\cdot(\nabla_{x}F_{\epsilon_n})(x, |u_n|)\right) \varphi
\, dx\\ \nonumber
&\quad -\frac{a}{2}\int_{\R^3}\beta u_n \cdot \overline
{u_n}\varphi \, dx-\frac{1}{2}\int_{\R^3}V(\epsilon_n x)|u_n|^2
\varphi \, dx -\frac{\epsilon_n}{2}\int_{\R^3}((x-x_n)\cdot(\nabla
V)(\epsilon_n x))|u_n|^2\varphi  \,dx \\ \nonumber
&=\frac{a}{2}\int_{\R^3}((x-x_n)\cdot\nabla\varphi)(\beta u_n\cdot \overline{u_n}) \, dx+\frac{1}{2}\int_{\R^3}V(\epsilon_n x) \left((x-x_n)\cdot\nabla\varphi\right)  |u_n|^2 \, dx \\  \nonumber 
&\quad-\int_{\R^3}((x-x_n)\cdot\nabla\varphi) F_{\epsilon_n}(x,
|u_n|) \, dx  \\ \nonumber & \quad
-\frac{\textnormal{i}}{2}\sum^3_{i,j=1}\sum^4_{l,m=1}
a^i_{lm}\int_{\R^3}(x_j-x^j_n) u^l_n\partial_j\overline{u^m_n}
\partial_i\varphi \, dx
\end{align*}
\vspace{-0.4cm}
\begin{align}  \label{cnvhgyg7gyygqqagt}
\hspace{-6.5cm} + \, \frac{\textnormal{i}}{2}\sum^3_{i,j=1}\sum^4_{l,m=1}a^i_{lm}
\int_{\R^3}(x_j-x^j_n)u^l_n\partial_i\overline{u^m_n}
\partial_j\varphi \, dx.
\end{align}
\end{lem}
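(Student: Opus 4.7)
The strategy is to derive this local Pohozaev-type identity by multiplying equation \eqref{mdirac} (with $\epsilon=\epsilon_n$, satisfied by $u_n$) by the test function $\overline{\bigl((x-x_n)\cdot\nabla u_n+u_n\bigr)\varphi}$, taking the real part, integrating over $\R^3$, and performing integration by parts term by term. The choice of the combined Pohozaev--Nehari multiplier (rather than the pure Pohozaev multiplier $\overline{(x-x_n)\cdot\nabla u_n\,\varphi}$) is dictated by the scaling $u\mapsto\lambda u(\lambda\cdot)$ that leaves the Sobolev critical term $|u|u$ invariant, whose infinitesimal generator is $u+(x-x_n)\cdot\nabla u$; this is what produces the coefficients $-a/2$ and $-1/2$ (instead of $-3a/2$ and $-3/2$) in front of $\int\beta u_n\cdot\overline{u_n}\,\varphi$ and $\int V|u_n|^2\varphi$, as well as the extra $-\int f_{\epsilon_n}|u_n|^2\varphi$ contribution in the bulk.

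A preliminary observation is that $\chi_1(\epsilon_n x)\equiv 0$ on $\mathrm{supp}\,\varphi$ for all $n$ large enough: by Lemma \ref{xncbdggdtdtttd}, $\epsilon_n x_n\to x_{j,i_\infty}\in\mathcal{M}^{\delta_0}\subset B_{R_0/2}(0)$, while $(\overline{C}+3)\epsilon_n\sigma_n^{-1/2}\to 0$, so every $x\in\mathrm{supp}\,\varphi$ satisfies $|\epsilon_n x|\leq R_0$ and hence $\chi_1(\epsilon_n x)=0$. Consequently the penalization term carrying $\tilde\xi$ drops out of the computation entirely.

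Writing $w_j:=x_j-x_n^j$ and using $\partial_i w_j=\delta_{ij}$, each non-kinetic term is handled by standard integration by parts. Hermiticity of $\beta$ makes $\beta u_n\cdot\overline{u_n}$ real; the Pohozaev piece of the mass term produces $-\tfrac{3a}{2}\int\beta u_n\cdot\overline{u_n}\,\varphi-\tfrac{a}{2}\int(\beta u_n\cdot\overline{u_n})(x-x_n)\cdot\nabla\varphi$, while the Nehari piece supplies $+a\int\beta u_n\cdot\overline{u_n}\,\varphi$, summing to the required $-\tfrac{a}{2}\int\beta u_n\cdot\overline{u_n}\,\varphi$ with the correct boundary contribution. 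The potential $V(\epsilon_n x)u_n$ is treated analogously, delivering $-\tfrac12\int V|u_n|^2\varphi$, the derivative-of-$V$ term $-\tfrac{\epsilon_n}{2}\int(x-x_n)\cdot(\nabla V)(\epsilon_n x)|u_n|^2\varphi$ (via $\nabla_x V(\epsilon_n\cdot)=\epsilon_n(\nabla V)(\epsilon_n\cdot)$), and the boundary $-\tfrac12\int V|u_n|^2(x-x_n)\cdot\nabla\varphi$. For the nonlinear term, the pointwise identities $\mathrm{Re}(u_n\cdot\overline{\partial_j u_n})=\tfrac12\partial_j|u_n|^2$ and $\partial_j F_{\epsilon_n}(x,|u_n|)=(\partial_{x_j}F_{\epsilon_n})(x,|u_n|)+\tfrac12 f_{\epsilon_n}\partial_j|u_n|^2$ combined with integration by parts and the Nehari contribution $-\int f_{\epsilon_n}|u_n|^2\varphi$ yield $\int(3F_{\epsilon_n}-f_{\epsilon_n}|u_n|^2)\varphi$, $\epsilon_n\int(x-x_n)\cdot(\nabla_x F_{\epsilon_n})\varphi$, and the boundary $-\int F_{\epsilon_n}(x-x_n)\cdot\nabla\varphi$.

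I expect the Dirac kinetic term to be the main obstacle. The task reduces to establishing the algebraic identity
\begin{align*}
\mathrm{Re}\int(-\textnormal{i}\alpha\cdot\nabla u_n)\cdot\overline{\bigl((x-x_n)\cdot\nabla u_n+u_n\bigr)\varphi}\,dx &= \tfrac{\textnormal{i}}{2}\sum_{i,j,l,m}a^i_{lm}\int(x_j-x_n^j)u_n^l\,\partial_j\overline{u_n^m}\,\partial_i\varphi\,dx\\
&\quad -\tfrac{\textnormal{i}}{2}\sum_{i,j,l,m}a^i_{lm}\int(x_j-x_n^j)u_n^l\,\partial_i\overline{u_n^m}\,\partial_j\varphi\,dx,
\end{align*}
valid for any sufficiently smooth $u_n$ and $\varphi$, by a careful componentwise integration by parts in $\partial_i$. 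After this first integration by parts one obtains a $\delta_{ij}$-contraction piece that combines with the Nehari contribution $\mathrm{Re}\int(-\textnormal{i}\alpha\cdot\nabla u_n)\cdot\overline{u_n\varphi}\,dx$, together with a symmetric second-derivative piece which, via a further integration by parts in $\partial_j$ and repeated use of the Hermiticity $a^i_{lm}=\overline{a^i_{ml}}$ of the Pauli--Dirac matrices, reproduces minus itself modulo terms carrying $\nabla\varphi$, so that all bulk (non-$\nabla\varphi$) contributions cancel pairwise and only the two displayed boundary sums remain. A direct check on the test case $u=(e^{\textnormal{i}x_1},0,0,e^{\textnormal{i}x_1})$ with $x_n=0$ and $\varphi$ general, for which both sides equal $2\int\varphi$, confirms the sign convention. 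Negating this identity and collecting the previous computations produces \eqref{cnvhgyg7gyygqqagt}.
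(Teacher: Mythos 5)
Your proposal is correct and follows essentially the same route as the paper's proof: pair \eqref{mdirac} with the combined Pohozaev--Nehari multiplier $\overline{\bigl((x-x_n)\cdot\nabla u_n+u_n\bigr)\varphi}$ (the paper writes the two pairings $\langle\cdot,(x-x_n)\cdot\nabla u_n\varphi\rangle$ and $\langle\cdot,u_n\varphi\rangle$ separately and then adds them, which is the same thing), observe that $\chi_1(\epsilon_n x)$ vanishes on $\mathrm{supp}\,\varphi$ for large $n$ via Lemma \ref{xncbdggdtdtttd} so the penalization term drops out, and carry out the standard componentwise integrations by parts. Your treatment of the kinetic term, including the Hermiticity argument that collapses the bulk pieces and leaves only the two $\nabla\varphi$-weighted sums, matches the computation the paper delegates to \cite[Lemma 4.20]{CG}.
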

\begin{proof}
 Note that $u_n$ solves the equation
\begin{align*}
-\textnormal{i} \alpha \cdot \nabla u_n + a  \beta
u_n + V(\epsilon_nx)
u_n - \frac 18 \chi_1(\eps_n x) V(\eps_n x) \tilde{\xi}(x, |u_n|)u_n=f_{\epsilon_n}(x, |u_n|)
u_n.
\end{align*}
From Lemma \ref{xncbdggdtdtttd}, we know that, up to a
subsequence, $\lim_{n\rightarrow\infty}\epsilon_nx_{n}\in
\mathcal{M}^{\delta_0} \subset B_{R_0/2}(0)$. By the definition of
$\chi_1$, we then have that, for any $n \in \N^+$ large enough,
$\chi_1(\eps_n x)=0$ for any $x \in B_n$. Since supp $\varphi
\subset B_n$, then
\begin{align*}
&\mbox{Re}\int_{\R^3} \big(-\textnormal{i} \alpha \cdot \nabla u_n+ a \beta u_{n} + V(\epsilon_nx)
u_{n}, \, (x-x_n) \cdot \nabla u_n \varphi \big)_2 \, dx  \\
&= \mbox{Re} \int_{\R^3} \big(f_{\eps_n}(x, |u_n|) u_n, \, (x-x_n)
\cdot \nabla u_n \varphi \big)_2\, dx
\end{align*}
and
\begin{align*}
&\mbox{Re}\int_{\R^3}\big(-\textnormal{i} \alpha \cdot \nabla u_n+
a \beta u_{n} + V(\epsilon_nx) u_{n}, \,  u_n \varphi\big)_2 \, dx = \mbox{Re} \int_{\R^3} \big(f_{\eps_n}(x, |u_n|) u_n, \,  u_n \varphi \big)_2 \, dx,
\end{align*}
where $( \cdot, \, \cdot)_2$ denotes the usual inner product in $L^2(\R^3, \C^4)$. At this point, arguing as the proof of \cite[Lemma 4.20]{CG} and performing some standard calculations, we then get the result of the lemma, and the proof is completed.
\end{proof}

The proof of the lemma below can be completed by adapting the ideas to the proof of \cite[Lemma 4.21]{CG}.
For convenience of the readers, we shall present its proof here.

\begin{lem}\label{nvcbvjug8f877fs}
If $5/2 <q< 3$, then $\Lambda_\infty=\emptyset$.
\end{lem}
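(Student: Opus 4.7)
The plan is to argue by contradiction, assuming $\Lambda_{\infty}\neq\emptyset$. Choose $i_{\infty}\in\Lambda_{\infty}$ realizing the minimum in \eqref{mcnvbbbvuvjyyfffff}, write $x_{n}=x_{j,i_{\infty},n}$, $\sigma_{n}=\sigma_{j,i_{\infty},n}$, $U=U_{j,i_{\infty}}$, and set $\omega_{n}(y):=\sigma_{n}^{-1}u_{n}(\sigma_{n}^{-1}y+x_{n})$; by Lemma \ref{cmvnvhhguf7fuufss}, $\omega_{n}\rightharpoonup U\neq 0$ in $\dot{H}^{1/2}(\mathbb{R}^{3},\mathbb{C}^{4})$ and, passing to a subsequence, $\omega_{n}\to U$ a.e. By Lemma \ref{xncbdggdtdtttd}, $U$ satisfies $-\mathrm{i}\alpha\cdot\nabla U=(1-\chi_{2}(x_{j,i_{\infty}}))|U|U$; since $-\mathrm{i}\alpha\cdot\nabla U=0$ implies $\Delta U=0$, which combined with $U\in L^{3}(\mathbb{R}^{3})$ forces $U\equiv 0$, the non-triviality of $U$ yields $1-\chi_{2}(x_{j,i_{\infty}})>0$. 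Hence there is $c_{0}>0$ with $1-\chi_{2}(\epsilon_{n}x)\geq c_{0}$ on $B_{n}:=B_{(\overline{C}+3)\sigma_{n}^{-1/2}}(x_{n})$ for all large $n$.

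Apply the local Pohozaev identity \eqref{cnvhgyg7gyygqqagt}. Using Lemma \ref{mprop} and the explicit forms of $H_{\epsilon_{n}}$ and $G_{\epsilon_{n}}$, a direct computation gives
\[
3F_{\epsilon_{n}}(x,t)-f_{\epsilon_{n}}(x,t)t^{2}\geq(1-\chi_{2}(\epsilon_{n}x))\,\frac{3-q}{q}\,t^{q}-C\chi_{2}(\epsilon_{n}x)\phi(x)t^{2}.
\]
Rescaling $y=\sigma_{n}(x-x_{n})$,
\[
\int_{B_{n}}|u_{n}|^{q}\,dx=\sigma_{n}^{q-3}\int_{\mathbb{R}^{3}}|\omega_{n}(y)|^{q}\tilde\varphi_{n}(y)\,dy,
\]
with $\tilde\varphi_{n}(y)=\psi(\sigma_{n}^{-1}|y|)\to 1$ pointwise. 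By Lemma \ref{3cncbvggftd6ettd66d}, $|U(y)|\leq C/(1+|y|^{2})$, so $U\in L^{q}(\mathbb{R}^{3})$ for $q>3/2$; Fatou's lemma and $U\neq 0$ then give $\liminf\int|\omega_{n}|^{q}\tilde\varphi_{n}\,dy\geq\int|U|^{q}\,dy>0$. The correction involving $\chi_{2}(\epsilon_{n}\cdot)\phi$ is negligible because $\phi(x)=O(\epsilon_{n}^{4})$ on $B_{n}$, so the leading LHS term is bounded below by $c\,\sigma_{n}^{q-3}$ for some fixed $c>0$ and all large $n$.

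It remains to verify that every other term in \eqref{cnvhgyg7gyygqqagt} is $o(\sigma_{n}^{q-3})$. For the remaining LHS terms involving $\beta u_{n}\cdot\overline{u_{n}}$, $V(\epsilon_{n}x)|u_{n}|^{2}$ and the two $\epsilon_{n}$-prefactored terms, H\"older's inequality with the continuous embedding $E\hookrightarrow L^{3}(\mathbb{R}^{3})$ gives
\[
\int_{B_{n}}|u_{n}|^{2}\,dx\leq\|u_{n}\|_{L^{3}(\mathbb{R}^{3})}^{2}|B_{n}|^{1/3}\leq C\sigma_{n}^{-1/2},
\]
and the elementary observation $\sigma_{n}^{-1/2}=o(\sigma_{n}^{q-3})\Leftrightarrow q>5/2$ handles all four; the $\epsilon_{n}$-factors make the last two terms of still smaller order. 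For the RHS, $\operatorname{supp}\nabla\varphi\subset\mathcal{A}_{n}^{4}$, where $|\nabla\varphi|\leq 2\sigma_{n}^{1/2}$, $|x-x_{n}|\leq(\overline{C}+3)\sigma_{n}^{-1/2}$, together with the pointwise estimate $|u_{n}|\leq C_{\nu}\sigma_{n}^{\nu}$ of Lemma \ref{ncbvuuf8f77f7987} and the gradient estimate $\|\nabla u_{n}\|_{L^{2}(\mathcal{A}_{n}^{4})}\leq C_{\nu}\sigma_{n}^{-1/4+\nu/2}$ of Lemma \ref{2wncbvuuf8f77f7987}. Cauchy--Schwarz then bounds the four RHS terms by one of
\[
C_{\nu}\sigma_{n}^{2\nu-3/2},\quad C_{\nu}\sigma_{n}^{3\nu-3/2},\quad C_{\nu}\sigma_{n}^{3\nu/2-1},
\]
all of which are $o(\sigma_{n}^{q-3})$ once $q>5/2$ and $\nu$ is chosen sufficiently small.

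Substituting into \eqref{cnvhgyg7gyygqqagt}, $c\sigma_{n}^{q-3}+o(\sigma_{n}^{q-3})=o(\sigma_{n}^{q-3})$; dividing by $\sigma_{n}^{q-3}$ and sending $n\to\infty$ forces $c\leq 0$, contradicting $c>0$. Hence $\Lambda_{\infty}=\emptyset$. The main obstacle is the tight exponent bookkeeping: the mass and potential terms demand $\sigma_{n}^{-1/2}=o(\sigma_{n}^{q-3})$, equivalently $q-3>-1/2$, which is precisely the threshold $q>5/2$ appearing in the statement.
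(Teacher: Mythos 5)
Your argument is correct and follows essentially the same route as the paper's own proof: contradiction, the local Pohozaev identity of Lemma~\ref{ncbvhhyf6yyrt6f6ryr}, the decay and rescaled-limit lemmas, the $L^2$ bound $\int_{B_n}|u_n|^2\,dx\le C\sigma_n^{-1/2}$, the lower bound $\int_{B_n}|u_n|^q\varphi\,dx\gtrsim\sigma_n^{q-3}$, and the contradiction from $q>5/2$. One small imprecision worth noting: the claim that $\phi(x)=O(\epsilon_n^4)$ on $B_n$ fails when $x_{j,i_\infty}=0$ (then $|x_n|$ need not be of order $1/\epsilon_n$); however, the correction $\int_{\R^3}\chi_2(\epsilon_nx)\bigl(3G_{\epsilon_n}(x,|u_n|)-g_{\epsilon_n}(x,|u_n|)|u_n|^2\bigr)\varphi\,dx$ is still negligible, because $|3G_{\epsilon_n}-g_{\epsilon_n}t^2|\le Ct^2$ by the definitions of $g_{\epsilon_n}, G_{\epsilon_n}$, and the same $L^2$ bound $\int_{B_n}|u_n|^2\,dx\le C\sigma_n^{-1/2}=o(\sigma_n^{q-3})$ applies, which is exactly what the paper uses.
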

\begin{proof}
We argue by contradiction that $\Lambda_\infty\neq\emptyset$. Thus we are able to choose $x_n$ and $\sigma_n$
satisfy \eqref{mmmzcczdr4s4s4sss} and \eqref{mcnvbbbvuvjyyfffff}, respectively.
Note that Lemma \ref{ncbvhhyf6yyrt6f6ryr}, it hold that $|\nabla\varphi(x)|\leq 2\sigma^{1/2}_n$ for any $x \in
B_n$ and $\mbox{supp} \, \varphi \subset B_n$. Using Lemmas
\ref{ncbvuuf8f77f7987} and \ref{2wncbvuuf8f77f7987}, we then
deduce that, for any $0<\nu<1/4$, there exists a constant
$C_\nu>0$ such that the right hand side of
\eqref{cnvhgyg7gyygqqagt} satisfies that
\begin{align*}
&\bigg|-\frac{\textnormal{i}}{2}\sum^3_{i,j=1}\sum^4_{l,m=1}a^i_{lm}\int_{\R^3}(x_j-x^j_n)
u^l_n\partial_j\overline{u^m_n} \partial_i\varphi \, dx
+\frac{\textnormal{i}}{2}\sum^3_{i,j=1}\sum^4_{l,m=1}a^i_{lm}
\int_{\R^3}(x_j-x^j_n) u^l_n\partial_i\overline{u^m_n} \partial_j\varphi \, dx \nonumber\\
&\quad + \frac{a}{2}\int_{\R^3}((x-x_n)\cdot\nabla\varphi)(\beta u_n\cdot \overline{u_n}) \, dx+\frac{1}{2}\int_{\R^3}V(\epsilon_n x) \left((x-x_n)\cdot\nabla\varphi\right)  |u_n|^2 \, dx \nonumber\\
&\quad \left. -\int_{\R^3}((x-x_n)\cdot\nabla\varphi)
F_{\epsilon_n}(x, |u_n|) \, dx   \right|  \leq
C_\nu\sigma^{-1+2\nu}_n.
\end{align*}
We now turn to estimate the left hand side of \eqref{cnvhgyg7gyygqqagt}.
Observe first that, for any $n \in \N^+$ large enough,
\begin{align} \label{v33}
\begin{split}
\left|\frac{\epsilon_n}{2}\int_{\R^3}((x-x_n)\cdot(\nabla V)(\epsilon_n x))|u_n|^2\varphi  \,dx \right| &=\left|\frac{\epsilon_n}{2}\int_{B_n}((x-x_n)\cdot(\nabla V)(\epsilon_n x))|u_n|^2\varphi  \,dx \right| \\
& \leq \frac{\epsilon_n}{2} \left(\overline{C} +3\right) \sigma_n^{-1/2} \|\nabla V\|_{L^{\infty}(\overline{\mathcal{M}^{\delta_0+1}})} \int_{\R^3} |u_n|^2\varphi  \,dx,
\end{split}
\end{align}
where we used Lemmas \ref{xncbdggdtdtttd} and \ref{2xncbdggdtdtttd} for the inequality. By Lemma \ref{sbdd}, H\"older's inequality and \eqref{v33}, we then have that,for any $n \in \N^+$ large enough,
\begin{align*}
&\left|\frac{a}{2}\int_{\R^3}\beta u_n\cdot \overline{u_n} \varphi
\, dx +\frac{1}{2}\int_{\R^3}V(\epsilon_n x)|u_n|^2 \varphi \, dx
+\frac{\epsilon_n}{2}\int_{\R^3}((x-x_n)\cdot(\nabla
V)(\epsilon_n x))|u_n|^2\varphi  \,dx \right|\\
& \leq \left|\frac{a}{2}\int_{\R^3}\beta u_n\cdot \overline{u_n} \varphi
\, dx +\frac{1}{2}\int_{\R^3}V(\epsilon_n x)|u_n|^2 \varphi \, dx \right|+\left|
\frac{\epsilon_n}{2}\int_{\R^3}((x-x_n)\cdot(\nabla
V)(\epsilon_n x))|u_n|^2\varphi  \,dx \right| \\
 &\leq C\int_{\R^3}|u_n|^2 \varphi \, dx
\leq  C\sigma^{-1/2}_n \|u_n\|_{L^3(\R^3)}^2 \leq  C\sigma^{-1/2}_n.
\end{align*}
Moreover, from Lemma \ref{sbdd} and the definition of $F_{\eps_n}$, we obtain that
\begin{align*}
\left|\epsilon_n\int_{\R^3}\left((x-x_n)
\cdot(\nabla_{x}F_{\epsilon_n})(x, |u_n|)\right) \varphi
\, dx\right|\leq C\sigma^{-1/2}_n\int_{\R^3}(|u_n|^q+|u_n|^3) \varphi\,
dx\leq C\sigma^{-1/2}_n.
\end{align*}
In virtue of the definitions of $f_{\eps_n}$ and $F_{\eps_n}$, there holds that
\begin{align}\label{mmvcnvii8g9fff}
\begin{split}
&\int_{\R^3}\left(3F_{\epsilon_n}(x, |u_n|) - f_{\epsilon_n}(x, |u_n|)|u_n|^2 \right) \varphi \, dx \\ 
&=\int_{\R^3}\left(1- \chi_2(\epsilon_n x)\right) \Big( \frac
{3}{q}|u_n|^q + |u_n|^{q} \left(m_{\epsilon_n}(|u_n|^2)
\right)^{\frac{3-q}{2}} \Big) \varphi \, dx \\ 
&\quad-\int_{\R^3}\left(1- \chi_2(\epsilon_n
x)\right)\Big(|u_n|^{q} + \frac{q}{3} |u_n|^{q}
\left(m_{\epsilon_n}(|u_n|^2)
\right)^{\frac{3-q}{2}}\Big) \varphi \, dx \\ 
&\quad-\frac{3-q}{3} \int_{\R^3}\left(1- \chi_2(\epsilon_n x)\right) |u_n|^{q+2}
\left(m_{\epsilon_n}(|u_n|^2) \right)^{\frac{3-q}{2} -1} b_{\epsilon_n}(|u_n|^2) \varphi \, dx \\ 
&\quad+\int_{\R^3} \chi_2(\epsilon_n x) \left(3 G_{\eps_n}(x,
|u_n|)-g_{\eps_n}(x, |u_n|)|u_n|^2\right) \varphi \, dx.
\end{split}
\end{align}
Since $tb_{\epsilon_n}(t)\leq m_{\epsilon_n}(t)$ for any $t \geq 0$, see
Lemma \ref{mprop}, and
\begin{align}
\left |\int_{\R^3}\chi_2(\epsilon_n x) \left(3 G_{\eps_n}(x,
|u_n|)-g_{\eps_n}(x, |u_n|)|u_n|^2\right) \varphi \, dx \right|
\leq C\int_{\R^3}|u_n|^2 \varphi \, dx \leq
C\sigma^{-1/2}_n,\nonumber
\end{align}
where we used the definitions of $g_{\eps_n}$ and $G_{\eps_n}$, it then yields from \eqref{mmvcnvii8g9fff} that
\begin{align*}
&\int_{\R^3}\left(3F_{\epsilon_n}(x, |u_n|) - f_{\epsilon_n}(x, |u_n|)|u_n|^2 \right) \varphi \, dx 
\geq\frac {3-q}{q}\int_{\R^3}\left(1- \chi_2(\epsilon_n x)\right)
|u_n|^q \varphi\, dx -C\sigma^{-1/2}_n.
\end{align*}
In view of Lemma \ref{xncbdggdtdtttd}, we have that, up to a
subsequence,
$x_{j,i_\infty}:=\lim_{n\rightarrow\infty}\epsilon_nx_{n}\in
\mathcal{M}^{\delta_0}$. It then follows that
$1-\chi_2(x_{j,i_\infty})>0$ and for any $n \in \mathbb{N}^+$
large enough,
\begin{align*}
\int_{\R^3}\left(3F_{\epsilon_n}(x, |u_n|) - f_{\epsilon_n}(x,
|u_n|)|u_n|^2 \right) \varphi \, dx \geq\frac
{3-q}{2q}(1-\chi_2(x_{j,i_\infty}))\int_{\R^3}|u_n|^q \varphi \,
dx-C\sigma^{-1/2}_n.
\end{align*}
Taking into account \eqref{cnvhgyg7gyygqqagt}, we now derive from
the above arguments that
\begin{align}\label{cnvbvhhfyf6vyyvyv}
\int_{\R^3}|u_n|^q \varphi \, dx \leq C\sigma^{-1/2}_n.
\end{align}
Let us now denote $G_n:=B_{L\sigma^{-1}_n}(x_n)$ and take $L>0$ is large enough
such that
\begin{align*}
\int_{B_L(0)}|U_{j, i_\infty}|^{q} \, dx \geq \frac{C_*}{2}
\end{align*}
where $C_*:=\int_{\R^3}|U_{j,i_\infty}|^{q}\,dx>0$. Thus, for any $n
\in \mathbb{N}^+$ large enough,
$$
G_n\subset B_{(\overline{C}+2)\sigma^{-\frac{1}{2}}_n}(x_n)\subset
B_n
$$
and $\varphi(x)=1$ for any $x \in G_n$. Since $ \sigma^{-1}_n
u_n(\sigma^{-1}_n\cdot+x_{n})\rightharpoonup U_{j,i_\infty}$ in
$\dot{H}^{1/2}(\R^3,\C^4)$ as $n \to \infty$, see Lemma
\ref{cmvnvhhguf7fuufss}, we then obtain that, for any $n \in
\mathbb{N}^+$ large enough,
\begin{align}\label{ncbvuuf78f77fesd}
\begin{split}
\int_{B_n}|u_{n}|^q\varphi \, dx &\geq \int_{G_n}|u_{n}|^q \,
dx=\sigma^{q-3}_n\int_{B_L(0)}|\sigma^{-1}_{n}u_{n}(\sigma^{-1}_nx+x_n)|^q\,
dx \\ &=\sigma^{q-3}_n\int_{B_L(0)}|U_{j,i_\infty}|^q \, dx +
o_n(1)\sigma^{q-3}_n\geq\frac{C_*}{4}\sigma^{q-3}_n.
\end{split}
\end{align}
Combining  \eqref{cnvbvhhfyf6vyyvyv} and \eqref{ncbvuuf78f77fesd}
leads to
\begin{align}\label{ncbvnvhgyfhfhf}
\sigma^{q-3}_n\leq C\sigma^{-1/2}_n.
\end{align}
It is impossible, because of $ q>5/2$. This in turn indicates that
$\Lambda_{\infty}= \emptyset$, and the proof is completed.
\end{proof}

We are now in a position to prove Proposition \ref{bcbvhfyfyufuadx}.
\medskip

\noindent{\bf Proof of Proposition \ref{bcbvhfyfyufuadx}.} Let
$\epsilon_n\rightarrow 0^+$ as $n\rightarrow\infty$. From \eqref{pd} and Lemma
\ref{nvcbvjug8f877fs}, we then obtain that
$$
u_{j, \eps_n}=\sum_{i \in \Lambda_1} U_{j ,i}(\cdot -x_{j, i,n})+ r_n.
$$
In view of $(4)$ of Lemma \ref{cmvnvhhguf7fuufss} and Lemma \ref{7cncbvggftd6ettd66d}, we further know that, for any $\varsigma>0$, there exists a constant $\delta>0$ independent of $n$ such that, for any $n\in\N^+$ large enough,
\begin{align}\label{jcmvnhhvufyfyy6dr}
\sup_{y\in\mathbb{R}^3}\int_{B_{\delta}(y)}|u_{j, \eps_n}|^{3} \, dx<\varsigma.
\end{align}
For simplicity, we shall write $u_n=u_{j, \eps_n}$ in the following. For any $y\in\R^3$, we choose a cut-off function $\eta \in C_0^{\infty}(\R^3, [0, 1])$ such that $\eta (x)=1$ for any $x \in
B_{\delta/2}(y)$, $\eta (x)=0$ for any $x \not\in B_{\delta}(y) $
and $|\nabla\eta(x)|\leq 4/\delta$ for any $x \in \R^3$. Note that
\begin{align}\label{2wcbvnfhhfyf6ryyfyf}
\begin{split}
-\textnormal{i} \alpha \cdot \nabla(\eta u_n)
&=\eta(- \textnormal{i} \alpha \cdot \nabla u_n)-\textnormal{i}\sum^3_{k=1} (\partial_k\eta) \alpha_k u_n \\ 
&=\eta\big(- a  \beta u_n - V(\epsilon_nx) u_n+ \frac 1 8 \chi_1(\eps_nx) V(\eps_n x) \tilde{\xi}(x, |u_n|) u_n \\ 
&\,\,\, \qquad + f_{\epsilon_n}(x, |u_n|) u_n\big)-\textnormal{i}\sum^3_{k=1} (\partial_k\eta) \alpha_k u_n.
\end{split}
\end{align}
By \eqref{2wcbvnfhhfyf6ryyfyf}, Lemma \ref{di} and the fact that $0 \leq \tilde{\xi}(x, t) \leq 2$ and $|f_{\epsilon_n}(x, t)|\leq
C(1+|t|)$ for any $x \in \R^3$ and $t \geq 0$, we then obtain, for
any $p>1$,
\begin{align}\label{cncbfggfyr6fttf}
\|\eta u_n\|_{W^{1,p}(\R^3)}\leq
C_p(\|u_n\|_{L^p(B_\delta(y))}+\|\eta
|u_{n}|^2\|_{L^p(B_\delta(y))}).
\end{align}
For any $1<p<3$, using H\"older's inequality and
\eqref{jcmvnhhvufyfyy6dr}, we have that
\begin{align}\label{hvnvhgyf88fiffl}
\| \eta
|u_n|^2\|_{L^p(B_\delta(y))}&\leq\|u_n\|_{L^3(B_\delta(y))}\|\eta
u_n\|_{L^{p^*}(B_\delta(y))} \leq\varsigma^{1/3}\|\eta
u_n\|_{L^{p^*}(B_\delta(y))},
\end{align}
where $p^*=3p/(3-p)$. Taking into account \eqref{cncbfggfyr6fttf},
\eqref{hvnvhgyf88fiffl}, the Sobolev inequality and Lemma
\ref{sbdd}, we then deduce that, for any $\varsigma>0$ small
enough,
\begin{align*}
\|u_n\|_{L^{p^*}(B_{\delta/2}(y))} \leq \|\eta
u_n\|_{L^{p^*}(B_{\delta}(y))}\leq C_p\|
u_n\|_{L^p(B_\delta(y))}\leq C_p\|u_n\|_{L^3(B_\delta(y))} \leq
C_p.
\end{align*}
Note that $ p^* \rightarrow+\infty$ as $p\rightarrow 3^-$, then it
is not difficult to deduce that
\begin{align*}
\|u_n\|_{W^{1,4}(B_{\delta/4}(y)))} &\leq
C(\|u_n\|_{L^4(B_{\delta/2}(y))}+\|
u_n\|_{L^8(B_{\delta/2}(y))}^2) \leq C.
\end{align*}
Therefore, there exists  a constant $M_N>0$ such that
$\sup_{x\in\mathbb{R}^3}|u_n(x)|<M_N.$ Thus we have completed the
proof. \hfill$\Box$

\section{Proof of theorem \ref{jgh77rtff11}}\label{proof}

\noindent{\bf Proof of Theorem \ref{jgh77rtff11}.} By Proposition
\ref{bcbvhfyfyufuadx}, we know that there exists a constant  $\epsilon'''_k>0$ such
that, for any $0<\epsilon<\epsilon'''_k$,
\begin{align}\label{bcvvuuvyctttc}
m_\epsilon(|u_{j,\epsilon}|^2)=|u_{j,\epsilon}|^2, \quad 
b_\epsilon(|u_{j,\epsilon}|^2)=1.
\end{align}
Due to $\Lambda_\infty=\emptyset$, see Lemma
\ref{nvcbvjug8f877fs}, and \eqref{bcvvuuvyctttc}, with the help of
the profile decomposition \eqref{pd}, then the situation is the
same as the case of subcritical equations treated in \cite{WZ1}.
Arguing as the proof of \cite[Lemma 4.5]{WZ1}, we can deduce that,
for any $\delta>0$, there exist constants $c=c(\delta, N)$ and
$C=C(\delta, N)>0$ such that
\begin{align*}
|u_{j,\epsilon}(x)|\leq C\exp\left(-c\,\left(\mbox{dist}(x, (\mathcal{V}^\delta)_\epsilon))^{\frac{2-\tau}{2}}\right)\right) \quad
\mbox{for any} \,\, x\in \mathbb{R}^3.
\end{align*}
This then shows that $\chi_1(\eps x) \tilde{\xi}(x,
|u_{j,\eps}|)=0$ and $ f_\epsilon(x,
|u_{j,\epsilon}|)=|u_{j,\epsilon}|^{q-2}+|u_{j,\epsilon}|$, which
suggests that $u_{j,\epsilon}$ are actually solutions of
\eqref{Deps}. By making a change of variable, we then obtain
Theorem \ref{jgh77rtff11}, and the proof is completed.
\hfill$\Box$

\bigskip

{\noindent \bf Acknowledgements.} We should like to thank the anonymous referees for his/her careful readings of our manuscript
and the useful comments and suggestions to improve the manuscript.


\begin{thebibliography}{10}

\bibitem{ABC} {A. Ambrosetti, M. Badiale, S. Cingolani:} {\it Semiclassical states of nonlinear Schr\"odinger equations,} Arch. Rational Mech. Anal. 140 (3) (1997)  285-300.

\bibitem{AFM} {A. Ambrosetti, V. Felli, A. Malchiodi:} {\it Ground states of nonlinear Schr\"odinger equations with potentials vanishing at infinity,} J. Eur. Math. Soc. (JEMS) 7(1) (2005) 117-144.

\bibitem{AMN} {A. Ambrosetti, A. Malchiodi, W.-M. Ni:} {\it Singularly perturbed elliptic equations with symmetry: existence of solutions concentrating on spheres I,} Commun. Math. Phys. 235(3) (2003) 427-466.

\bibitem{AR} {V. Ambrosio, V.D.  R\v{a}dulescu:} {\it Fractional double-phase patterns: concentration and multiplicity of solutions}, J. Math. Pures Appl. (9) 142 (2020) 101-145.








\bibitem{BD} {T. Bartsch, T. Ding:} {\it Deformation theorems on non-metrizable vector spaces and applications to critical point theory,} Math. Nachr. 279 (12) (2006) 1267-1288.



\bibitem{BDr} {J.D. Bjorken, S.D. Drell:} {\it Relativistic Quantum Fields,} McGraw-Hill, 1965.



\bibitem{bo} {W. Borrelli, William; R. L. Frank, Rupert:} {\it Sharp decay estimates for critical Dirac equations,} Trans.
Amer. Math. Soc. 373 (3) (2020) 2045-2070.



\bibitem{BJ} {J. Byeon, L. Jeanjean:} {\it Standing waves for nonlinear Schr\"odinger equations with a general nonlinearity,} Arch. Ration. Mech. Anal. 185 (2) (2007) 185-200.

\bibitem{BW1} {J. Byeon, Z.-Q. Wang:} {\it Standing waves with a critical frequency for nonlinear Schr\"odinger equations,} Arch. Ration. Mech. Anal. 165 (4) (2002) 295-316.

\bibitem{BW2} {J. Byeon, Z.-Q. Wang:} {\it Standing waves with a critical frequency for nonlinear Schr\"odinger equations. II,}  Calc. Var. Partial Differential Equations 18 (2003) (2) 207-219.

\bibitem{CG} {S. Chen, T, Gou:} {\it Infinitely many localized semiclassical states for critical nonlinear Dirac equations,} Nonlinearity 34 (2021) 6358-6397.



\bibitem{CLW} {S. Chen, J. Liu, Z.-Q. Wang:} {\it Localized nodal solutions for a critical nonlinear Schr\"odinger equation,} J. Funct. Anal. 277 (2) (2019) 594-640.

\bibitem{CW} {S. Chen, Z.-Q. Wang:} {\it Localized nodal solutions of higher topological type for semiclassical nonlinear
Schr\"odinger equations,} Calc. Var. Partial Differential Equations (2017) 56:1.

\bibitem{CDX}{Y. Chen, Y. Ding, T. Xu:} {\it Potential well and multiplicity of solutions for nonlinear Dirac equations,}
Commun. Pure Appl. Anal.  19 (1) (2020) 587-607.

\bibitem{DF1} {M. del Pino, P.L. Felmer:} {\it Multi-peak bound states for nonlinear Schr\"odinger equations,} Ann. Inst. H. Poincar\'e Anal. Non Lin\'eaire 15 (2) (1998) 127-149.

\bibitem{DF2} {M. del Pino, P.L. Felmer:} {\it Local mountain passes for semilinear elliptic problems in unbounded domains,} Calc. Var. Partial Differential Equations 4 (2) (1996) 121-137.

\bibitem{Ding} Y. Ding: {\it Variational Methods for Strongly Indefinite Problems,} Interdisciplinary Mathematical Sciences-Vol. 7, World Scientific Publ. Singapore (2007).

\bibitem{Ding1} {Y. Ding:} {\it Semi-classical ground states concentrating on the nonlinear potential for a Dirac equation,}  J. Differential Equations 249 (5) (2010) 1015-1034.

\bibitem{DiLi} {Y. Ding, X. Liu:} {\it Semi-classical limits of ground states of a nonlinear Dirac equation,} J. Differential Equations 252 (9) (2012) 4962-4987.



\bibitem{DiRu1} {Y. Ding, B. Ruf:} {\it Existence and concentration of semiclassical solutions for Dirac equations with critical nonlinearities,} SIAM J. Math. Anal. 44 (6) (2012) 3755-3785.

\bibitem{DLR} {Y. Ding, C. Lee, B. Ruf:} {\it On semiclassical states of a nonlinear Dirac equation}, Proc. Roy. Soc. Edinburgh Sect. A 143 (4) (2013) 765-790.



\bibitem{DX} {Y. Ding, T. Xu:} {\it Localized concentration of semi-classical states for nonlinear Dirac equations,} Arch. Ration. Mech. Anal. 216 (2) (2015) 415-447.







\bibitem{FLR} {R. Finkelstein, R. LeLevier, M. Ruderman:} {\it Nonlinear spinor fields;} Phys. Rev. 83 (2) (1951) 326-332.

\bibitem{FFK} {R. Finkelstein, C. Fronsdal, P. Kaus:} {\it Nonlinear spinor field,} Phys. Rev. 103(5) (1956) 1571-1579.



\bibitem{IS} {T. Ichinose, Y. Saito:}  {\it Improved Sobolev embedding theorems for vector-valued functions,} Funkcial. Ekvac. 57 (2) (2014) 245-295.

\bibitem{JT} {L. Jeanjean, K. Tanaka:} {\it Singularly perturbed elliptic problems with superlinear or asymptotically linear nonlinearities,} Calc. Var. Partial Differential Equations  21 (3) (2004) 287-318.

\bibitem{JR} {C. Ji, V.D. R\v{a}dulescu:} {\it Multiplicity and concentration of solutions to the nonlinear magnetic Schr\"odinger equation}, Calc. Var. Partial Differential Equations 59 (4) (2020) Paper No. 115, 28 pp.



\bibitem{MV} {V. Moroz, J. Van Schaftingen:} {\it Semiclassical stationary states for nonlinear Schr\"odinger equations with fast decaying potentials,} Calc. Var. Partial Differential Equations 37 (1-2) (2010) 1-27.

\bibitem{Morse} {J. Moser:} {\it A new proof of De giorgi's theorem concerning the regularity problem
for elliptic differential equations,} Comm. Pure Appl. Math. 13 (3) (1960) 457-468.

\bibitem{Oh1} {Y.-G. Oh:} {\it Existence of semiclassical bound states of nonlinear Schr\"odinger equations with potentials of the class $(V )_a$, }Comm. Partial Differential Equations 13.12 (1988)  1499-1519.

\bibitem{Oh2} {Y.-G. Oh:} {\it On positive multi-lump bound states of nonlinear Schr\"odinger equations under multiple well potential,} Comm. Math. Phys. 131 (2) (1990) 223-253.

\bibitem{Ra} {H. Rabinowitz:} {\it On a class of nonlinear Schr\"odinger equations,} Z. Angew. Math. Phys. 43 (2) (1992) 270-291.

\bibitem{Wang} {X. Wang:} {\it On concentration of positive bound states of nonlinear Schr\"odinger equations,} Comm. Math. Phys. 153 (2) (1993) 229-244.

\bibitem{WZ} {Z.-Q. Wang, X. Zhang:} {\it An infinite sequence of localized semiclassical bound states for nonlinear Dirac equations,} Calc. Var. Partial Differential Equations (2018) 57:56.

\bibitem{WZ1} {Z.-Q. Wang, X. Zhang:} {\it Semiclassical states of nonlinear Dirac equations with degenerate potential,} Ann. Mat. Pura Appl. (4) 198 (6) (2019) 1955-1984.

\bibitem{Th} {B. Thaller:} {\it The Dirac Equation,} Texts and Monographs in Physics, Springer, Berlin, 1992.

\bibitem{SST}{C. Tintarev:} {\it Concentration analysis and cocompactness,} in Concentration Analysis and Applications to PDE, 117-141, Trends Math., Birkh\"auser/Springer, Basel, 2013.

\bibitem{ZLL} {J. Zhao, X. Liu, J. Liu:} {\it $p$-Laplacian equations in $\R^N$ with finite potential via truncation method, the critical case,} J. Math. Anal. Appl. 455 (2017) 58-88.

\end{thebibliography}
\end{document}